\begin{document}
\setlength{\baselineskip}{18pt}
\newtheorem{thm}{Theorem}[section]
\newtheorem{ass}{Assumption}[section]
\newtheorem{example}[thm]{Example}
\newtheorem{cor}[thm]{Corollary}
\newtheorem{notation}[thm]{Notation}
\newtheorem{notations}[thm]{Notations}
\newtheorem{definition}[thm]{Definition}
\newtheorem{lemma}[thm]{Lemma}
\newtheorem{claim}[thm]{Claim}
\newtheorem{mthm}[thm]{Meta-Theorem}
\newtheorem{prop}[thm]{Proposition}
\newtheorem{rem}[thm]{Remark}
\newtheorem{conj}[thm]{Conjecture}
\newtheorem{rems}[thm]{Remarks}
\newtheorem{theorem}{Theorem}[section]
\newtheorem{lem}{Lemma}[section]
\newtheorem{prp}[theorem]{Proposition}

\newtheorem{dfn}[theorem]{Definition}
\newtheorem{exmp}[theorem]{Example}
\newtheorem{remark}{Remark}
\newtheorem{summary}[theorem]{Summary}
\newtheorem{Hypothesis}[theorem]{Hypothesis}
\newtheorem{con}{Condition}[section]

\renewcommand {\theequation}{\thesection.\arabic{equation}}
\newcommand{\x}{{\bf x}}

\def\al{{\alpha}}\def\be{{\beta}}\def\de{{\delta}}
\def\ep{{\epsilon}}\def\ga{{\gamma}}
\def\la{{\lambda}}\def\om{{\omega}}\def\si{{\sigma}}
\def\th{{\theta}}\def\ze{{\zeta}}

\def\De{{\Delta}}\def\Ga{{\Gamma}}
\def\La{{\Lambda}}\def\Om{{\Omega}}\def\Si{{\Sigma}}

\def\tY{{\tilde{Y}}}\def\txi{{\tilde{\xi}}}\def\hxi{{\hat{\xi}}}

\newfam\msbmfam\font\tenmsbm=msbm10\textfont
\msbmfam=\tenmsbm\font\sevenmsbm=msbm7
\scriptfont\msbmfam=\sevenmsbm\def\bb#1{{\fam\msbmfam #1}}
\def\bA{\mathbf A}\def\BB{\mathbb B}\def\CC{\mathbb C}
\def\EE{\mathbb E}\def\HH{\mathbb H}\def\KK{\mathbb K}\def\PP{\mathbb P}\def\QQ{\mathbb Q}
\def\RR{\mathbb R}\def\ZZ{\mathbb Z}

\def\cA{{\cal A}}\def\cB{{\cal B}}\def\cC{{\cal C}}
\def\cD{{\cal D}}\def\cF{{\cal F}}\def\cG{{\cal G}}\def\cH{{\cal H}}
\def\cI{{\cal I}}\def\cL{{\cal L}}\def\cM{{\cal M}}
\def\cP{{\cal P}}\def\cS{{\cal S}}\def\cT{{\cal T}}\def\cV{{\cal V}}
\def\cX{{\cal X}}\def\cY{{\cal Y}}\def\cZ{{\cal Z}}

\def\vX{\vec{X}}\def\bY{\mathbf Y}\def\tY{{\tilde{Y}}}\def\hY{{\hat{Y}}}\def\chY{{\breve{Y}}}
\def\hla{{\hat{\lambda}}}\def\tR{{\tilde{R}}}\def\tP{{\tilde{\mathbb{P}}}}\def\chR{{\breve{R}}}
\def\hmu{{\hat{\mu}}} \def\hsi{{\hat{\sigma}}}\def\tze{{\tilde{\zeta}}}
\def\hze{{\hat{\zeta}}}

\def\<{\left<}\def\>{\right>}\def\ka{\kappa}

\def\({\left(}\def\){\right)}

%\begin{titlepage}
\title{Large deviations for locally monotone stochastic partial differential equations driven by L\'evy noise \thanks{This work was started
when the second author visited the first author at the Department of
Mathematics, University of Macau.
 JX's research is supported by Macao Science and Technology Fund FDCT
076/2012/A3 and Multi-Year Research Grants of the University of Macau Nos.
MYRG2014-00015-FST and MYRG2014-00034-FST. JZ's research is supported by National Natural Science Foundation of China (NSFC) (No. 11431014, No. 11401557), and the Fundamental Research Funds for the Central Universities (No. WK 0010000048).}  }
\medskip

\author{ Jie Xiong\thanks{Department of Mathematics, Faculty of Sci. \& Tech., University of Macau, Macau.
  Email: {\tt jiexiong@umac.mo }} and Jianliang Zhai\thanks{School of Mathematical Sciences, University of Science and Technology of China, Hefei, 230026, China Email: {\tt zhaijl@ustc.edu.cn} }
}
%\date{\empty}
%
\maketitle

\begin{abstract}
In this paper, we establish a large deviation principle for a type of stochastic partial differential equations (SPDEs) with locally monotone coefficients driven by L\'evy noise. The weak convergence method plays an important role.

Keywords: Large Deviations, L\'evy Processes, Monotone coefficients.

2010 AMS Classification:  Primary 60H15.  Secondary 35R60, 37L55.
\end{abstract}

\section{Introduction}
\setcounter{equation}{0}
\renewcommand{\theequation}{\thesection.\arabic{equation}}

We shall prove via the weak convergence approach \cite{Budhiraja-Chen-Dupuis, Budhiraja-Dupuis-Maroulas., Dupuis-Ellis} the Freidlin-Wentzell type large deviation principle (LDP) for a family of locally monotone stochastic partial differentia equations (SPDEs) driven by L\'evy processes, these SPDEs include stochastic reaction-diffusion equations, stochastic Burgers type equations, stochastic 2D Navier-Stokes equations
and stochastic equations of non-Newtonian fluids. \vskip 0.2cm

Let $V$ be a reflexive and separable Banach space, which is densely and
continuously injected in a separable Hilbert space $(H,\ \langle\cdot,\cdot\rangle_H)$. Identifying $H$ with its dual we get
$$
V\subset H\cong H^*\subset V^*,
$$
where the star `*' denotes the dual spaces. Denote $\langle\cdot,\cdot\rangle_{V^*,V}$ the duality between $V^*$ and $V$, then we have
$$
\langle u,v\rangle_{V^*,V}=\langle u,v\rangle_{H},\ \ \ \forall\ u\in H,\ v\in V.
$$

Fix $T>0$ and let $(\Omega,\mathcal{F},(\mathcal{F}_t)_{t\in[0,T]},\mathbb{P})$ be a complete separable filtration probability space. Let $\mathcal{P}$ be the predictable $\sigma$-field, that is the $\sigma$-field on $[0,T]\times \Omega$ generated by all left continuous and $\mathcal{F}_t$-adapted
real-valued processes. Further denote by $\mathcal{BF}$ the $\sigma$-field of the progressively measurable sets on $[0,T]\times\Omega$, i.e.
$$
\mathcal{BF}=\{
              O\subset[0,T]\times\Omega:\ \forall t\in[0,T],\ O\cap([0,t]\times\Omega)\in\mathcal{B}([0,t])\otimes\mathcal{F}_t
            \},
$$
where $\mathcal{B}([0,t])$ denotes the Borel $\sigma$-field on $[0,t]$.

Now we consider the following type of SPDEs driven by L\'evy processes:
\begin{eqnarray}\label{eq SPDE 01}
&&dX^\epsilon_t=\mathcal{A}(t,X^\epsilon_t)dt+\epsilon\int_\mathbb{X}f(t,X^\epsilon_{t-},z)\widetilde{N}^{\epsilon^{-1}}(dt,dz),\\
&&X^\epsilon_0=x\in H,\nonumber
\end{eqnarray}
where $\mathcal{A}:[0,T]\times V\rightarrow V^*$ is a $\mathcal{B}([0,T])\otimes\mathcal{B}(V)$-measurable function.
$\mathbb{X}$ is a locally compact Polish space.
$N^{\epsilon^{-1}}$ is a Poisson random measure on $[0,T]\times\mathbb{X}$ with a
$\sigma$-finite
mean measure $\epsilon^{-1}\lambda_T\otimes \nu$, $\lambda_T$ is the Lebesgue measure on $[0,T]$ and $\nu$ is a $\sigma$-finite measure on $\mathbb{X}$.
\[\widetilde{N}^{\epsilon^{-1}}([0,t]\times B)=N^{\epsilon^{-1}}([0,t]\times B)-\epsilon^{-1}t\nu(B),\qquad
\forall B\in \mathcal{B}(\mathbb{X})\mbox{ with }\nu(B)<\infty,\] is the compensated Poisson random measure.
$f:[0,T]\times V\times \mathbb{X}\rightarrow H$ is a $\mathcal{B}([0,T])\otimes\mathcal{B}(V)\otimes\mathcal{B}(\mathbb{X})$-measurable function.

The following assumptions are from \cite{B-Liu-Zhu}, which guarantee that Eq. \eqref{eq SPDE 01} admits a unique solution.
Suppose that there exists constants $\alpha>1,\ \beta\geq0,\ \theta>0,\ C>0$, positive functions $K$ and $F$ and a function $\rho:V\rightarrow [0,+\infty)$ which is measurable and bounded on the balls,
 such that the following conditions hold for all $v,v_1,v_2\in V$ and $t\in [0,T]$:
\begin{itemize}
\item[{\bf (H1)}](Hemicontinuity) The map $s\mapsto \langle \mathcal{A}(t,v_1+s v_2),v\rangle_{V^*,V}$ is continuous on $\mathbb{R}$.

\item[{\bf (H2)}](Local monotonicity)
\begin{eqnarray*}
&&2\langle \mathcal{A}(t,v_1)-\mathcal{A}(t,v_2),v_1-v_2\rangle_{V^*,V}+\int_{\mathbb{X}}\|f(t,v_1,z)-f(t,v_2,z)\|^2_H\nu(dz)\\
&\leq&
(K_t+\rho(v_2))\|v_1-v_2\|^2_H,
\end{eqnarray*}

\item[{\bf (H3)}](Coercivity)
$$
2\langle \mathcal{A}(t,v),v\rangle_{V^*,V}+\theta\|v\|^\alpha_V
\leq
F_t(1 + \|v\|^2_H).
$$

\item[{\bf (H4)}](Growth)
$$
\|\mathcal{A}(t,v)\|^{\frac{\alpha}{\alpha-1}}_{V^*}
\leq
(F_t+C\|v\|^\alpha_V)(1+\|v\|_H^\beta).
$$

\end{itemize}

\begin{definition}\label{def 02}
 An $H$-valued c\'adl\'ag $\mathcal{F}_t$-adapted process $\{X^\epsilon_t\}_{t\in[0,T]}$ is called a solution of Eq. \eqref{eq SPDE 01}, if for its $dt\times \mathbb{P}$-equivalent class $\widehat{X}^\epsilon$ we have
\begin{itemize}
\item[(1)] $\widehat{X}^\epsilon\in L^\alpha([0,T];V)\cap L^2([0,T];H)$, $\mathbb{P}$-a.s.;

\item[(2)] the following equality holds $\mathbb{P}$-a.s.:
\begin{eqnarray*}
X^\epsilon_t=x+\int_0^t \mathcal{A}(s,\overline{X}^\epsilon_s)ds+\epsilon\int_0^t\int_{\mathbb{X}}f(s,\overline{X}^\epsilon_{s},z)\widetilde{N}^{\epsilon^{-1}}(ds,dz),\ \ t\in[0,T],
\end{eqnarray*}
\end{itemize}
where $\overline{X}^\epsilon$ is any $V$-valued progressively measurable $dt\times \mathbb{P}$ version of $\widehat{X}^\epsilon$.
\end{definition}

With a minor modification of
\cite[Theorem 1.2]{B-Liu-Zhu}, we have the following existence and uniqueness theorem for the solution of Eq. \eqref{eq SPDE 01}.
%is a simple version of their results, see in \cite{B-Liu-Zhu}.

\begin{thm}\label{thm solution 01}
Suppose that conditions $(H1)$-$(H4)$ hold for $F, K\in L^1([0,T];\mathbb{R}^+)$, and there exists a constant $\gamma<\frac{\theta}{2\beta}$ and $G\in L^1([0,T];\mathbb{R}^+)$
such that for all $t\in[0,T]$ and $v\in V$ we have
\begin{eqnarray}\label{eq solution 01}
\int_{\mathbb{X}}\|f(t,v,z)\|^2_H\nu(dz)
\leq
F_t(1+\|v\|^2_H)+\gamma\|v\|^\alpha_V;
\end{eqnarray}

\begin{eqnarray}\label{eq solution 02}
\int_{\mathbb{X}}\|f(t,v,z)\|^{\beta+2}_H\nu(dz)
\leq
G_t(1+\|v\|^{\beta+2}_H);
\end{eqnarray}

\begin{eqnarray}\label{eq solution 03}
\rho(v)\leq C(1+\|v\|_V^\alpha)(1+\|v\|^\beta_H).
\end{eqnarray}
Then

\begin{itemize}
\item[(1)] For any $x\in L^{\beta+2}(\Omega,\mathcal{F}_0,\mathbb{P};H)$, (\ref{eq SPDE 01}) has a unique solution $\{X^\epsilon_t\}_{t\in[0,T]}$.

\item[(2)] If $\gamma$ is small enough, then
$$
\mathbb{E}\Big(\sup_{t\in[0,T]}\|X^\epsilon_t\|_H^{\beta+2}\Big)+\mathbb{E}\int_0^T\|X^\epsilon_t\|_H^\beta\|X^\epsilon_t\|^\alpha_Vdt
\leq
C_\epsilon\Big(\mathbb{E}\|x\|_H^{\beta+2}+\int_0^TG_tdt+\int_0^TF_tdt\Big).
$$
\end{itemize}
\end{thm}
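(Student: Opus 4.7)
The plan is to reduce equation \eqref{eq SPDE 01} to a form already covered by Theorem 1.2 of \cite{B-Liu-Zhu} by absorbing the small parameter $\epsilon$ into both the noise coefficient and the intensity measure. Concretely, set $g(t,v,z):=\epsilon f(t,v,z)$ and view $\widetilde{N}^{\epsilon^{-1}}$ as the compensated version of a Poisson random measure on $[0,T]\times\mathbb{X}$ with intensity $\nu^\epsilon:=\epsilon^{-1}\nu$. Then \eqref{eq SPDE 01} rewrites as
$$dX^\epsilon_t=\mathcal{A}(t,X^\epsilon_t)dt+\int_{\mathbb{X}} g(t,X^\epsilon_{t-},z)\widetilde{N}^{\epsilon^{-1}}(dt,dz),$$
which is a locally monotone SPDE driven by a compensated Poisson random measure of the precise form treated in \cite{B-Liu-Zhu}. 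It thus suffices to verify that $(\mathcal{A},g)$, together with the intensity $\nu^\epsilon$, still satisfies the hypotheses of that reference.

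Conditions (H1), (H3), (H4) and the structural bound \eqref{eq solution 03} involve only $\mathcal{A}$ and $\rho$, so they transfer verbatim. For (H2), \eqref{eq solution 01} and \eqref{eq solution 02}, a direct scaling computation yields
$$\int_{\mathbb{X}}\|g(t,v_1,z)-g(t,v_2,z)\|^2_H\nu^\epsilon(dz)=\epsilon\int_{\mathbb{X}}\|f(t,v_1,z)-f(t,v_2,z)\|^2_H\nu(dz),$$
$$\int_{\mathbb{X}}\|g(t,v,z)\|^2_H\nu^\epsilon(dz)=\epsilon\int_{\mathbb{X}}\|f(t,v,z)\|^2_H\nu(dz),$$
$$\int_{\mathbb{X}}\|g(t,v,z)\|^{\beta+2}_H\nu^\epsilon(dz)=\epsilon^{\beta+1}\int_{\mathbb{X}}\|f(t,v,z)\|^{\beta+2}_H\nu(dz),$$
so that the assumed bounds for $f$ with respect to $\nu$ immediately give the analogous bounds for $g$ with respect to $\nu^\epsilon$, with $F,K,G$ and $\gamma$ multiplied by $\epsilon$-dependent constants; in particular the smallness requirement $\gamma<\theta/(2\beta)$ is preserved (and for the coercivity parameter this only becomes easier as $\epsilon$ shrinks). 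With these verifications, part (1) follows directly from the existence and uniqueness assertion of \cite[Theorem 1.2]{B-Liu-Zhu}, and the moment bound in part (2) follows from its moment estimate, the constant $C_\epsilon$ simply absorbing the $\epsilon$-dependent factors arising from the identities above.

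I expect the main technical content---and hence the place one would do work if reproving the statement from scratch rather than quoting \cite{B-Liu-Zhu}---to lie in justifying these a priori and passage-to-the-limit arguments. Following the variational approach, one would: project onto finite-dimensional subspaces $V_n\subset V$ to obtain finite-dimensional SDEs solvable by standard jump-SDE theory; apply It\^o's formula to $\|X^{n,\epsilon}_t\|^{\beta+2}_H$, combining (H3), \eqref{eq solution 01} and the BDG inequality for jump martingales (whose higher-moment control is exactly what \eqref{eq solution 02} provides) to obtain uniform estimates in $n$; extract a weakly convergent subsequence in $L^\alpha([0,T]\times\Omega;V)\cap L^2([0,T]\times\Omega;H)$ together with weak-$\ast$ convergence of $\mathcal{A}(\cdot,X^{n,\epsilon})$ in $L^{\alpha/(\alpha-1)}([0,T]\times\Omega;V^\ast)$ by (H4); and identify the nonlinear limit through a Minty--Browder argument built on local monotonicity (H2) and hemicontinuity (H1). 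The delicate step in this route is the Minty--Browder identification in the presence of jumps, where the compensator correction in It\^o's formula on $\|X^{n,\epsilon}-v\|^2_H$ must be controlled using precisely the $(\beta+2)$-moment bound \eqref{eq solution 02}; uniqueness is then standard from (H2) and a stochastic Gronwall inequality. Since \cite{B-Liu-Zhu} has carried out exactly these arguments, only the rescaling observation above is needed to conclude.
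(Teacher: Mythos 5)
Your proposal is correct and matches the paper's own treatment: the paper gives no proof beyond invoking \cite[Theorem 1.2]{B-Liu-Zhu} ``with a minor modification,'' and your rescaling $g=\epsilon f$, $\nu^\epsilon=\epsilon^{-1}\nu$ (with the resulting factors $\epsilon$ and $\epsilon^{\beta+1}$ in the bounds, so that the smallness condition on $\gamma$ is preserved) is precisely that modification made explicit. Your sketch of the Galerkin/Minty--Browder machinery is a faithful account of what \cite{B-Liu-Zhu} does internally, but it is not needed here beyond the citation.
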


Our aim in the present paper is to establish a LDP for the solution of (\ref{eq SPDE 01}) as $\epsilon\rightarrow 0$ on
$D([0,T],H)$, the space of $H$-valued $\rm c\grave{a}dl\grave{a}g$ functions on $[0,T]$.

In the past three decades, there are numerous literatures about the LDP for stochastic evolution equations (SEEs)
and SPDEs driven by Gaussian processes (cf. \cite{Bessaih-Millet, Budhiraja-Dupuis, B-D-M-1., CW, CR, CM2, C, Duan-Millet, Manna-Sritharan-Sundar, Liu, Ren-Zhang, Rockner-Zhang-Zhang, S, Z, Zhang}, etc.). Many of these results were obtained by using the weak convergence approach for the case of Gaussian noise, introduced by \cite{Budhiraja-Dupuis, B-D-M-1.}, see, for example, \cite{Bessaih-Millet, Budhiraja-Dupuis, B-D-M-1., Duan-Millet, Manna-Sritharan-Sundar, Liu, Ren-Zhang, Rockner-Zhang-Zhang, Zhang}. This approach has been proved to be very effective for various finite/infinite-dimensional stochastic dynamical systems. One of the main advantages of this approach is that one only needs to make
some necessary moment estimates.

The situations for SEEs and SPDEs driven by L\'evy noise are drastically different because of the appearance of the jumps. There are only
a few results on this topic so far. The first paper on LDP for SEEs of jump type is $\rm R\ddot{o}kner$ and Zhang \cite{Rockner-Zhang} where
the additive noise is considered. The study of LDP for multiplicative L\'evy noise has been carried out as well, e.g., \cite{Swiech-Zabczyk} and \cite{Budhiraja-Chen-Dupuis} for SEEs where the LDP was established on a larger space (hence, with a weaker topology) than the actual state space
of the solution, \cite{YZZ} for SEEs on the actual state space, \cite{ZZ} for the 2-D stochastic Navier-Stokes equations (SNSEs). Before \cite{ZZ},
Xu and Zhang\cite{Xu-Zhang} dealt with  the 2-D SNSEs driven by additive L\'evy noise. We also refer to \cite{A1, A2, Bao-Yuan, Feng-Kurtz} for related results.

To obtain our result, we will use the weak convergence approach introduced by \cite{Budhiraja-Chen-Dupuis, Budhiraja-Dupuis-Maroulas., Dupuis-Ellis}
for the case of Poisson random measures. This approach is a powerful tool to prove the LDP for SEEs and SPDEs driven by L\'evy noise, which has been
applied for several dynamical systems. The weak convergence method was first used in \cite{Budhiraja-Chen-Dupuis} to obtain LDP for SPDEs on co-nuclear spaces driven by $L\acute{e}vy$ noises and in \cite{YZZ} for SPDEs on Hilbert spaces with regular coefficients. Paper \cite{ZZ} deals with the 2-D SNSEs driven by multiplicative L\'evy noise.
Bao and Yuan \cite{Bao-Yuan} established a LDP for a class of stochastic functional differential equations of neutral type driven by a finite-dimensional Wiener process and a stationary Poisson random measure.

Monotone method is a main tool to prove the existence and uniqueness of SPDEs, and it can tackle a large class of SPDEs, for more details, see \cite{B-Liu-Zhu, Liu-Rockner} and references therein. Working in the framework of \cite{B-Liu-Zhu}, the purpose of this paper is to establish a LDP for a family of locally monotone SPDEs (\ref{eq SPDE 01}) driven by pure jumps. In addition to the difficulties caused by the jumps, much of our problem is to deal with the monotone operator $\mathcal{A}$. Using the weak convergence approach, the main point is to prove the tightness of some controlled SPDEs, see (\ref{eq SPDE 02}). This is highly nontrivial. We first divide the controlled SPDEs (\ref{eq SPDE 02}) into three parts, and establish the tightness of each part in suitable
larger space, respectively, see Proposition \ref{lem 4.6}. And then via the Skorohod representation theorem we are able to show the weak convergence
actually takes place in the spae $D([0,T],H)$. Finally, we mention that our framework can tackle the SPDEs with some polynomial growth, see Example 4.3 in \cite{B-Liu-Zhu}.

This paper is organized as follows. In Section 2, we will recall the abstract criteria for LDP obtained in \cite{Budhiraja-Chen-Dupuis, Budhiraja-Dupuis-Maroulas.}. In Section 3, we will show the main result of this paper. Section 4 and Section 5 is devoted to prove prior results on the controlled SPDEs (\ref{eq SPDE 02}), which play a key role in this paper. The entire Section 6 is to establish the LDP for (\ref{eq SPDE 01}).

%\begin{remark}
%In this paper, we assume that $B\equiv0$, for
%\end{remark}

\section{Preliminaries}
\setcounter{equation}{0}
\renewcommand{\theequation}{\thesection.\arabic{equation}}

\subsection{Poisson Random Measure}\label{Section Representation}

For  convenience of the reader, we shall adopt the notation in \cite{Budhiraja-Chen-Dupuis} and \cite{Budhiraja-Dupuis-Maroulas.}.
Recall that $\mathbb{X}$ is a locally compact Polish space. Denote by $\mathcal{M}_{FC}(\mathbb{X})$ the collection of all measures on $(\mathbb{X},\mathcal{B}(\mathbb{X}))$ such that $\nu(K)<\infty$ for any compact $K \in \mathcal{B}(\mathbb{X})\}$.
Denote by $C_c(\mathbb{X})$ the space of continuous functions with compact supports, endow $\mathcal{M}_{FC}(\mathbb{X})$ with the weakest topology such that for every $f\in
C_c(\mathbb{X})$, the function
\[\nu\rightarrow\langle f,\nu\rangle=\int_{\mathbb{X}}f(u)d\nu(u),\]
is continuous for $\nu\in\mathcal{M}_{FC}(\mathbb{X})$.
This topology can be metrized such that $\mathcal{M}_{FC}(\mathbb{X})$ is a Polish space (see e.g.
\cite{Budhiraja-Dupuis-Maroulas.}).

Fixing $T\in(0,\infty)$, we denote $\mathbb{X}_T=[0,T]\times\mathbb{X}$ and $\nu_T=\lambda_T\otimes\nu$ with $\lambda_T$ being Lebesgue measure on $[0,T]$ and
$\nu\in\mathcal{M}_{FC}(\mathbb{X})$.
Let $\textbf{n}$ be a Poisson random measure on $\mathbb{X}_T$ with intensity measure
$\nu_T$, it is well-known \cite{Ikeda-Watanabe} that $\textbf{n}$ is an $\mathcal{M}_{FC}(\mathbb{X}_T)$ valued random variable such that

(i) for each
$B\in\mathcal{B}(\mathbb{X}_T)$
with $\nu_T(B)<\infty$, $\textbf{n}(B)$ is Poisson distributed with mean $\nu_T(B)$;

(ii) for disjoint
$B_1,\cdots,B_k\in\mathcal{B}(\mathbb{X}_T)$, $\textbf{n}(B_1),\cdots,\textbf{n}(B_k)$ are mutually independent
random
variables.

For notational simplicity, we write from now on
\begin{eqnarray}\label{eq P4 star}
\mathbb{M}=\mathcal{M}_{FC}(\mathbb{X}_T),
\end{eqnarray}
and denote by $\mathbb{P}$ the probability measure induced by $\textbf{n}$ on $(\mathbb{M},\mathcal{B}(\mathbb{M}))$.
Under $\mathbb{P}$, the canonical map, $N:\mathbb{M}\rightarrow\mathbb{M},\ N(m)\doteq m$, is a Poisson random measure with
intensity measure $\nu_T$. With applications to large deviations in mind, we also consider, for $\theta>0$,
probability
measures $\mathbb{P}_\theta$ on $(\mathbb{M},\mathcal{B}(\mathbb{M}))$ under which $N$ is a Poisson random measure
with intensity $\theta\nu_T$. The corresponding expectation operators will be denoted by $\mathbb{E}$ and
$\mathbb{E}_\theta$,
respectively.

For further use, simply denote
\begin{equation}\label{eq0302a}
\mathbb{Y}=\mathbb{X}\times[0,\infty), \ \ \mathbb{Y}_T=[0,T]\times\mathbb{Y}, \ \ \bar{\mathbb{M}}=\mathcal{M}_{FC}(\mathbb{Y}_T).\end{equation}
Let $\bar{\mathbb{P}}$ be the unique probability measure on $(\bar{\mathbb{M}},\mathcal{B}(\bar{\mathbb{M}}))$
under which the canonical map, $\bar{N}:\bar{\mathbb{M}}\rightarrow\bar{\mathbb{M}},\bar{N}(\bar{m})\doteq \bar{m}$, is a Poisson
random
measure with intensity measure $\bar{\nu}_T=\lambda_T\otimes\nu\otimes \lambda_\infty$, with $\lambda_\infty$ being
Lebesgue measure on $[0,\infty)$.
The corresponding expectation operator will be denoted by $\bar{\mathbb{E}}$. Let
$\mathcal{F}_t\doteq\sigma\{\bar{N}((0,s]\times A):0\leq s\leq t,A\in\mathcal{B}(\mathbb{Y})\},$ and let
$\bar{\mathcal{F}}_t$
denote the completion under $\bar{\mathbb{P}}$. We denote by $\bar{\mathcal{P}}$ the predictable $\sigma$-field on
$[0,T]\times\bar{\mathbb{M}}$
with the filtration $\{\bar{\mathcal{F}}_t:0\leq t\leq T\}$ on $(\bar{\mathbb{M}},\mathcal{B}(\bar{\mathbb{M}}))$.
Let
$\bar{\mathbb{A}}$
be the class of all $(\bar{\mathcal{P}}\otimes\mathcal{B}(\mathbb{X}))/\mathcal{B}([0,\infty))$-measurable maps
$\varphi:\mathbb{X}_T\times\bar{\mathbb{M}}\rightarrow[0,\infty)$. For $\varphi\in\bar{\mathbb{A}}$, we shall suppress the argument $\bar m$ in
$\varphi(s,x,\bar m)$ and simply write $\varphi(s,x)=\varphi(s,x,\bar m)$.
  Define a
counting process $N^\varphi$ on
$\mathbb{X}_T$ by
   \begin{eqnarray}\label{Jump-representation}
      N^\varphi((0,t]\times U)=\int_{(0,t]\times U}\int_{(0,\infty)}1_{[0,\varphi(s,x)]}(r)\bar{N}(dsdxdr),\
      t\in[0,T],U\in\mathcal{B}(\mathbb{X}).
   \end{eqnarray}

\noindent The above $N^\varphi$ is called a controlled random measure, with $\varphi$ selecting the intensity for the points at location
$x$
and time $s$, in a possibly random but non-anticipating way. When $\varphi(s,x,\bar{m})\equiv\theta\in(0,\infty)$, we
write $N^\varphi=N^\theta$. Note that $N^\theta$ has the same distribution with respect to $\bar{\mathbb{P}}$ as $N$
has with respect to $\mathbb{P}_\theta$.
\vskip 3mm
Define $l:[0,\infty)\rightarrow[0,\infty)$ by
    $$
    l(r)=r\log r-r+1,\ \ r\in[0,\infty).
    $$
For any $\varphi\in\bar{\mathbb{A}}$ the quantity
    \begin{eqnarray}\label{L_T}
      L_T(\varphi)=\int_{\mathbb{X}_T}l(\varphi(t,x,\omega))\nu_T(dtdx)
    \end{eqnarray}
is well defined as a $[0,\infty]$-valued random variable.

%For $\varphi \in \bar{\mathbb{A}} $, define $L_{T}(\varphi)$ as  and the counting process $N^{\varphi}$ on $\mathbb{X}_{T}$ in section $\ref{Poisson Random Measure}$.
%Denote by $L_2(H)$ the space of all Hilbert-Schmidt operators from $H$ to $H$.
%Define function space
%\begin{eqnarray}
%\mathcal{L}_{2}=\Big\{\psi: \psi\; \text{is}\; \bar{\mathcal{P}}^{\bar{\mathbb{V}}}\setminus
%\mathcal{B}(L_2(H))\; \text{measurable and} \; \int_{0}^{T}\|\psi(s)\|_{L_2(H)}^{2}\;ds<\infty,\bar{\mathbb{P}}^{\bar{\mathbb{V}}}\ a.s.
%\Big\}.
%\end{eqnarray}
%
%Set $\mathcal{U}=\mathcal{L}_{2}\times \bar{\mathbb{A}}$. Define
%$\tilde{L}_{T}(\psi)=\frac{1}{2}\int_{0}^{T}\|\psi(s)\|_{L_2(H)}^{2}\;ds$ for $\psi\in \mathcal{L}_{2}$,
%and $\bar{L}_{T}(u)=\tilde{L}_{T}(\psi)+L_{T}(\varphi)$ for $u=(\psi,\varphi)\in \mathcal{U}$.
%

\subsection{A general criterion for large deviation principle \cite[Theorem 4.2]{Budhiraja-Dupuis-Maroulas.}}

We first state the large deviation principle we are concerned with.
Let $\{X^\epsilon,\epsilon>0\}\equiv\{X^\epsilon\}$ be a family of random variables defined on a probability space
$(\Omega,\mathcal{F},\mathbb{P})$
and taking values in a Polish space $\mathcal{E}$. Denote the expectation with
respect to $\mathbb{P}$ by $\mathbb{E}$.
The theory of large deviations is concerned with events $A$ for which probability $\mathbb{P}(X^\epsilon\in A)$
converges to
zero exponentially fast as $\epsilon\rightarrow 0$. The exponential decay rate of such probabilities is typically
expressed
in terms of a 'rate function' $I$ defined as below.
    \begin{definition}\label{Dfn-Rate function}
       \emph{\textbf{(Rate function)}} A function $I: \mathcal{E}\rightarrow[0,\infty]$ is called a rate function on
       $\mathcal{E}$,
       if for each $M<\infty$ the level set $\{y\in\mathcal{E}:I(y)\leq M\}$ is a compact subset of $\mathcal{E}$.
       For
       $A\in \mathcal{B}(\mathcal{E})$,
       we define $I(A)\doteq\inf_{y\in A}I(y)$.
    \end{definition}
    \begin{definition}  \label{d:LDP}
       \emph{\textbf{(Large deviation principle)}} Let $I$ be a rate function on $\mathcal{E}$. The sequence
       $\{X^\epsilon\}$
       is said to satisfy a large deviation principle (LDP) on $\mathcal{E}$ with rate function $I$ if the following two
       conditions
       hold.

         a. LDP upper bound. For each closed subset $F$ of $\mathcal{E}$,
              $$
                \limsup_{\epsilon\rightarrow 0}\epsilon\log\mathbb{P}(X^\epsilon\in F)\leq-I(F).
              $$

         b. LDP lower bound. For each open subset $G$ of $\mathcal{E}$,
              $$
                \liminf_{\epsilon\rightarrow 0}\epsilon\log\mathbb{P}(X^\epsilon\in G)\geq-I(G).
              $$
    \end{definition}
\vskip 3mm

Next, we recall the general criterion for large deviation principles established in \cite{Budhiraja-Dupuis-Maroulas.}. Let
$\{\mathcal{G}^\epsilon\}_{\epsilon>0}$
be a family of measurable maps from $\mathbb{M}$ to $\mathbb{U}$, where $\mathbb{M}$ is introduced in (\ref{eq P4 star}) and $\mathbb{U}$ is a Polish space. We present
below a sufficient condition for LDP of the family
$Z^\epsilon=\mathcal{G}^\epsilon\Big(\epsilon N^{\epsilon^{-1}}\Big)$,
as $\epsilon\rightarrow 0$.

Define
   \begin{eqnarray}\label{S_N}
     S^N=\Big\{g:\mathbb{X}_T\rightarrow[0,\infty):\,L_T(g)\leq N\Big\},
   \end{eqnarray}
a function $g\in S^N$ can be identified with a measure $\nu_T^g\in\mathbb{M}$, defined by
   \begin{eqnarray*}
      \nu_T^g(A)=\int_A g(s,x)\nu_T(dsdx),\ \ A\in\mathcal{B}(\mathbb{X}_T).
   \end{eqnarray*}
This identification induces a topology on $S^N$ under which $S^N$ is a compact space, see the Appendix of
\cite{Budhiraja-Chen-Dupuis}.
Throughout this paper we use this topology on $S^N$. Denote $S=\cup_{N=1}^\infty S^N$ and $\bar{\mathbb{A}}^N:=\{\varphi\in\bar{\mathbb{A}}\ \text{and}\ \varphi(\omega)\in S^N,\ \bar{\mathbb{P}}\text{-}a.s.\}$.

%We endow $\bar{\mathbb{M}}$ with the weakest topology such that for every $f\in
%C_c(\mathbb{Y}_T)$, the function
%$\vartheta\rightarrow\langle f,\vartheta\rangle=\int_{\mathbb{Y}_T}f(u)d\vartheta(u),\vartheta\in \bar{\mathbb{M}}$ is continuous.
%This topology can be metrized such that $\bar{\mathbb{M}}$ is a Polish space.

\begin{con}\label{LDP}
 There exists a measurable map $\mathcal{G}^0:\mathbb{M}\rightarrow \mathbb{U}$ such that the following hold.

 a). For all $ N\in\mathbb{N}$, let $g_n,\ g\in S^N$ be such that $g_n\rightarrow g$ as
 $n\rightarrow\infty$. Then
      $$
         \mathcal{G}^0\Big(\nu_T^{g_n}\Big)\rightarrow \mathcal{G}^0\Big(\nu_T^{g}\Big)\quad\text{in}\quad \mathbb{U}.
      $$

 b). For all $N\in\mathbb{N}$, let $\varphi_\epsilon, \varphi\in\bar{\mathbb{A}}^N$ be
 such that $\varphi_\epsilon$
 converges in distribution to $\varphi$ as $\epsilon\rightarrow 0$. Then
      $$
         \mathcal{G}^\epsilon\Big(\epsilon
         N^{\epsilon^{-1}\varphi_\epsilon}\Big)\Rightarrow
         \mathcal{G}^0\Big(\nu_T^{\varphi}\Big).
      $$
\end{con}
In this paper, we use the symbol ``$\Rightarrow$" to denote convergence in distribution.
%The first condition requires continuity in the control of deterministic controlled systems. The second condition
%is a law of large numbers result for small noise controlled stochastic systems.

For $\phi\in\mathbb{U}$, define $\mathbb{S}_\phi=\Big\{g\in S:\phi=\mathcal{G}^{0}\Big(
\nu^g_T\Big)\Big\}$. Let $I:\mathbb{U}\rightarrow[0,\infty]$
be defined by
     \begin{eqnarray}\label{Rate function I}
        I(\phi)=\inf_{g\in\mathbb{S}_\phi}L_T(g),\ \qquad \phi\in\mathbb{U}.
     \end{eqnarray}
By convention, $I(\phi)=\infty$ if $\mathbb{S}_\phi=\emptyset$. The following criterion for LDP was established in Theorem 4.2 of \cite{Budhiraja-Dupuis-Maroulas.}.

\begin{thm}\label{LDP-main}
For $\epsilon>0$, let $Z^\epsilon$ be defined by $Z^\epsilon=\mathcal{G}^\epsilon\Big(\epsilon
N^{\epsilon^{-1}}\Big)$, and suppose
that Condition \ref{LDP} holds. Then
the family $\{Z^\epsilon\}_{\epsilon>0}$ satisfies a large deviation principle with the rate function $I$ defined by \eqref{Rate function I}.
\end{thm}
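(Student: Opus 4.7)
The plan is to invoke the standard equivalence between the LDP and the Laplace principle on the Polish space $\mathbb{U}$ and then exploit the variational representation of exponential functionals of Poisson random measures developed in \cite{Budhiraja-Dupuis-Maroulas.}. Since $I$ will be shown below to be a rate function, it suffices to prove, for every bounded continuous $h:\mathbb{U}\to\mathbb{R}$,
\begin{equation*}
\lim_{\epsilon\to 0}\,-\epsilon\log\mathbb{E}\bigl[e^{-h(Z^\epsilon)/\epsilon}\bigr]=\inf_{\phi\in\mathbb{U}}\bigl\{h(\phi)+I(\phi)\bigr\}.
\end{equation*}
The BDM variational formula, rescaled by $\epsilon$, provides the working identity
\begin{equation*}
-\epsilon\log\mathbb{E}\bigl[e^{-h(Z^\epsilon)/\epsilon}\bigr]=\inf_{\varphi\in\bar{\mathbb{A}}}\bar{\mathbb{E}}\Bigl[L_T(\varphi)+h\bigl(\mathcal{G}^\epsilon(\epsilon N^{\epsilon^{-1}\varphi})\bigr)\Bigr],
\end{equation*}
which is the workhorse for both directions.

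For the rate-function property, the convex, lower semicontinuous map $l$ makes $g\mapsto L_T(g)$ lower semicontinuous on the compact space $S^N$; combined with the continuity of $g\mapsto\mathcal{G}^0(\nu_T^g)$ furnished by Condition \ref{LDP}(a), one obtains the identification $\{\phi\in\mathbb{U}:I(\phi)\leq N\}=\mathcal{G}^0(\nu_T^{S^N})$, displaying each level set of $I$ as a continuous image of a compact set and hence as compact.

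The Laplace upper bound ($\limsup\leq\inf\{h+I\}$) is the elementary half. Given $\delta>0$, pick $\phi_0\in\mathbb{U}$ with $h(\phi_0)+I(\phi_0)\leq\inf\{h+I\}+\delta$ and $g\in\mathbb{S}_{\phi_0}$ with $L_T(g)\leq I(\phi_0)+\delta$. Substituting the deterministic control $\varphi\equiv g$ into the variational identity yields
\begin{equation*}
-\epsilon\log\mathbb{E}\bigl[e^{-h(Z^\epsilon)/\epsilon}\bigr]\leq L_T(g)+\bar{\mathbb{E}}\bigl[h\bigl(\mathcal{G}^\epsilon(\epsilon N^{\epsilon^{-1}g})\bigr)\bigr],
\end{equation*}
and Condition \ref{LDP}(b), together with bounded convergence, forces the expectation to $h(\phi_0)$; sending $\delta\to 0$ closes this half.

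The Laplace lower bound ($\liminf\geq\inf\{h+I\}$) is the substantive half and carries the main obstacle. For each $\epsilon$, choose a near-optimal control $\varphi_\epsilon\in\bar{\mathbb{A}}$ for the variational identity. The trivial test $\varphi\equiv 1$ gives the a priori bound $\|h\|_\infty$ on the infimum, so one may assume $L_T(\varphi_\epsilon)\leq 2\|h\|_\infty+1$, placing $\varphi_\epsilon\in\bar{\mathbb{A}}^N$ for $N$ depending only on $\|h\|_\infty$. Compactness of $S^N$ produces a subsequential weak limit $\varphi_\epsilon\Rightarrow\varphi$, and Condition \ref{LDP}(b) then yields the joint convergence $(\varphi_\epsilon,\mathcal{G}^\epsilon(\epsilon N^{\epsilon^{-1}\varphi_\epsilon}))\Rightarrow(\varphi,\mathcal{G}^0(\nu_T^\varphi))$. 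Combining Fatou's lemma, the lower semicontinuity of $L_T$ under this weak convergence, and the pointwise inequality $L_T(\varphi(\omega))+h(\mathcal{G}^0(\nu_T^{\varphi(\omega)}))\geq\inf_\phi\{h(\phi)+I(\phi)\}$ (which is just the definition of $I$) delivers the bound. The delicate aspect is verifying Condition \ref{LDP}(b) itself for our SPDE — balancing lower semicontinuity of $L_T$ against the joint continuity of the solution map — which is where all the subsequent technical work of the paper is directed.
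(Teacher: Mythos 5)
The paper does not actually prove this theorem: it is imported verbatim as Theorem 4.2 of Budhiraja--Dupuis--Maroulas \cite{Budhiraja-Dupuis-Maroulas.}, so your proposal has to be measured against that reference rather than against anything in the present text. Your sketch reproduces the architecture of the BDM argument faithfully: equivalence of the LDP with the Laplace principle (legitimate once the level sets of $I$ are shown compact, which your identification $\{I\leq N\}=\mathcal{G}^0(\nu_T^{S^N})$ handles correctly via lower semicontinuity of $L_T$ on the compact $S^N$ and Condition \ref{LDP}(a)), the rescaled variational representation, the upper bound by substituting a near-optimal deterministic control, and the lower bound via tightness of near-optimal controls.

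The one genuine gap is in the lower bound, at the sentence ``so one may assume $L_T(\varphi_\epsilon)\leq 2\|h\|_\infty+1$, placing $\varphi_\epsilon\in\bar{\mathbb{A}}^N$.'' What the a priori bound actually gives is $\bar{\mathbb{E}}\bigl[L_T(\varphi_\epsilon)\bigr]\leq 2\|h\|_\infty+1$, an expectation bound, whereas membership in $\bar{\mathbb{A}}^N$ requires $L_T(\varphi_\epsilon(\omega))\leq N$ for $\bar{\mathbb{P}}$-a.e.\ $\omega$; Markov's inequality only makes the bad event small, and one cannot simply discard it, because the quantity being estimated is an infimum over controls, not over events. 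The standard repair (and the content of the corresponding reduction lemma in \cite{Budhiraja-Dupuis-Maroulas.}) is to truncate: set $\tau_M=\inf\{t:\int_{[0,t]\times\mathbb{X}}l(\varphi_\epsilon(s,x))\nu_T(dsdx)\geq M\}$ and replace $\varphi_\epsilon$ by $1$ after $\tau_M$. The modified control lies in $\bar{\mathbb{A}}^M$, its cost does not increase, and since the two controlled random measures coincide on $\{\tau_M=T\}$ the objective changes by at most $2\|h\|_\infty\,\bar{\mathbb{P}}(\tau_M<T)\leq 2\|h\|_\infty(2\|h\|_\infty+1)/M$, which is made negligible by choosing $M$ large before sending $\epsilon\to0$. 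A smaller point: Condition \ref{LDP}(b) as stated gives convergence of $\mathcal{G}^\epsilon(\epsilon N^{\epsilon^{-1}\varphi_\epsilon})$ alone; the joint convergence of the pair $(\varphi_\epsilon,\mathcal{G}^\epsilon(\epsilon N^{\epsilon^{-1}\varphi_\epsilon}))$ that your Fatou step uses must be extracted from tightness of the pair on $S^N\times\mathbb{U}$ together with identification of subsequential limits, followed by a Skorokhod representation so that Fatou and the lower semicontinuity of $L_T$ can be applied $\omega$-wise.
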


For applications, the following strengthened form of Theorem \ref{LDP-main} is more useful and was established in Theorem 2.4
of \cite{Budhiraja-Chen-Dupuis}. Let $\{K_n\subset \mathbb{X},\
n=1,2,\cdots\}$
be an increasing sequence of compact sets such that $\cup _{n=1}^\infty K_n=\mathbb{X}$. For each $n$, let
   \begin{eqnarray*}
     \bar{\mathbb{A}}_{b,n}
          &=&
              \Big\{\varphi\in\bar{\mathbb{A}}:
                                   {\rm \ for\ all\ }(t,\omega)\in[0,T]\times\bar{\mathbb{M}},\
                                   n\geq\varphi(t,x,\omega)\geq
                                   1/n\ {\rm if}\ x\in K_n\ \\
                                  &&\ \ \ \ \ \ \ \ \ \ \ \ \
                                       {\rm and}\ \varphi(t,x,\omega)=1\ if\ x\in K_n^c
              \Big\},
   \end{eqnarray*}
and let $\bar{\mathbb{A}}_{b}=\cup _{n=1}^\infty\bar{\mathbb{A}}_{b,n}$. Define
$\tilde{\mathbb{A}}^N=\bar{\mathbb{A}}^N\cap\Big\{\phi: \phi\in\bar{\mathbb{A}}_b\Big\}$.

\begin{thm}{\bf\rm\cite{Budhiraja-Chen-Dupuis}}\label{LDP-main-01}
  Suppose Condition \ref{LDP} holds with $\bar{\mathbb{A}}^N$ therein replaced by $\tilde{\mathbb{A}}^N$. Then the conclusions of
  Theorem \ref{LDP-main}
  continue to hold .
\end{thm}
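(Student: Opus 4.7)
The strategy is to reduce this to Theorem \ref{LDP-main} by showing that the restricted form of Condition \ref{LDP} (with $\tilde{\mathbb{A}}^N$) still implies the full form (with $\bar{\mathbb{A}}^N$). Part (a) of the condition only involves deterministic $g\in S^N$ and is identical in the two formulations, so the task reduces to upgrading part (b) from $\tilde{\mathbb{A}}^N$ to $\bar{\mathbb{A}}^N$.

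The approximation I would use is the following truncation. Given $\varphi\in\bar{\mathbb{A}}^N$, define
$$
\varphi^n(t,x,\omega)=
\begin{cases}
(\varphi(t,x,\omega)\wedge n)\vee(1/n), & x\in K_n,\\
1, & x\in K_n^c.
\end{cases}
$$
Since $l(r)=r\log r-r+1$ is convex with unique minimum $l(1)=0$, a case check on $K_n\cap\{\varphi<1/n\}$, $K_n\cap\{\varphi>n\}$ and $K_n^c$ shows $l(\varphi^n)\leq l(\varphi)$ pointwise; hence $L_T(\varphi^n)\leq L_T(\varphi)\leq N$ and $\varphi^n\in\tilde{\mathbb{A}}^N$. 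By the same domination together with the compactness and metrizability of $S^N$ recorded in \cite{Budhiraja-Chen-Dupuis}, one has $\varphi^n\to\varphi$ in $S^N$ as $n\to\infty$, and similarly for the $\varphi_\epsilon^n$ built from any sequence $\varphi_\epsilon\in\bar{\mathbb{A}}^N$.

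With the truncation in place, the proof proceeds in three steps. For each fixed $n$, the restricted version of (b) yields $\mathcal{G}^\epsilon(\epsilon N^{\epsilon^{-1}\varphi_\epsilon^n})\Rightarrow \mathcal{G}^0(\nu_T^{\varphi^n})$; part (a), applied to the deterministic convergence $\varphi^n\to\varphi$ in $S^N$, gives $\mathcal{G}^0(\nu_T^{\varphi^n})\to\mathcal{G}^0(\nu_T^{\varphi})$ in $\mathbb{U}$; finally, a triangle-inequality and diagonal selection yield the desired convergence $\mathcal{G}^\epsilon(\epsilon N^{\epsilon^{-1}\varphi_\epsilon})\Rightarrow\mathcal{G}^0(\nu_T^\varphi)$, provided one establishes
$$
\lim_{n\to\infty}\limsup_{\epsilon\to 0}\bar{\mathbb{P}}\Bigl(d_{\mathbb{U}}\bigl(\mathcal{G}^\epsilon(\epsilon N^{\epsilon^{-1}\varphi_\epsilon}),\mathcal{G}^\epsilon(\epsilon N^{\epsilon^{-1}\varphi_\epsilon^n})\bigr)>\delta\Bigr)=0
$$
for every $\delta>0$. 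Once this is known, the conditions of Theorem \ref{LDP-main} are verified on all of $\bar{\mathbb{A}}^N$ and the LDP follows.

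The last display is the real obstacle and cannot be addressed without appealing to structure of $\mathcal{G}^\epsilon$ beyond what is assumed here; in the abstract framework of \cite{Budhiraja-Chen-Dupuis} it is handled via a Girsanov change of measure for the controlled Poisson random measure, using that $\varphi_\epsilon$ and $\varphi_\epsilon^n$ differ only on $\{\varphi_\epsilon\notin[1/n,n]\}\cap K_n$ or on $K_n^c$, where the $l$-integral of $\varphi_\epsilon$ is forced to be small uniformly in $\epsilon$ by the bound $L_T(\varphi_\epsilon)\leq N$. Since that argument is model-independent and already carried out in Theorem~2.4 of \cite{Budhiraja-Chen-Dupuis}, I would invoke it as a black box rather than reproduce it, saving effort for the later sections where the specific monotone SPDE structure must be exploited.
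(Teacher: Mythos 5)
The paper does not actually prove this statement: it is imported verbatim as Theorem 2.4 of \cite{Budhiraja-Chen-Dupuis}, so the only ``proof'' on offer there is the citation. Measured against that, your proposal is not in conflict with the paper, but as a self-contained argument it is circular at the one point that matters. Your displayed approximation estimate, $\lim_{n\to\infty}\limsup_{\epsilon\to 0}\bar{\mathbb{P}}\bigl(\,\cdot\,\bigr)=0$, is in substance the entire content of the theorem, and you discharge it by invoking Theorem 2.4 of \cite{Budhiraja-Chen-Dupuis} --- that is, the very theorem being proved. Either cite the result outright (as the paper does) or actually supply that step; the truncation preamble by itself does not constitute progress toward it.

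Two further points on the sketch. First, your step ``the restricted (b) yields $\mathcal{G}^\epsilon(\epsilon N^{\epsilon^{-1}\varphi^n_\epsilon})\Rightarrow\mathcal{G}^0(\nu_T^{\varphi^n})$'' presupposes $\varphi^n_\epsilon\Rightarrow\varphi^n$ in $S^N$, which does not follow from $\varphi_\epsilon\Rightarrow\varphi$: the truncation $g\mapsto g^n$ is a pointwise operation and is not continuous for the weak measure topology that $S^N$ carries, so this step has its own gap. Second, and more importantly, the route ``upgrade Condition \ref{LDP} from $\tilde{\mathbb{A}}^N$ to $\bar{\mathbb{A}}^N$ and then apply Theorem \ref{LDP-main}'' is not how \cite{Budhiraja-Chen-Dupuis} argue, precisely because no continuity of $\mathcal{G}^\epsilon$ in the control is available in the abstract setting. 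Their proof works at the level of the Laplace-principle/variational representation: a near-optimal control in $\bar{\mathbb{A}}$ is replaced by one in $\bar{\mathbb{A}}_b$ whose cost $L_T$ increases by at most $\delta$ and whose controlled random measure coincides with the original one on an event of probability close to one; on that event $\mathcal{G}^\epsilon$ returns the identical output, so no stability property of $\mathcal{G}^\epsilon$ is ever needed. Your pointwise bound $l(\varphi^n)\le l(\varphi)$, the consequences $L_T(\varphi^n)\le N$ and $\varphi^n\in\tilde{\mathbb{A}}^N$, and the convergence $\varphi^n\to\varphi$ in $S^N$ for fixed $\omega$ are all correct; they are ingredients of that argument, but they do not close it.
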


%\subsection{SPDEs}
%\label{section SPDEs}

 \section{LDP for Eq. \eqref{eq SPDE 01}}
 \setcounter{equation}{0}
\renewcommand{\theequation}{\thesection.\arabic{equation}}

 Assume that $X_{0}=x\in H$ is deterministic. Let $X^\epsilon$ be the $H$-valued solution to Eq. (\ref{eq SPDE 01}) with initial value $x$. In this
 section, we state the LDP on $D([0,T],H)$ for $\{X^\epsilon\}$ under suitable assumptions.

 Take $\mathbb{U}=D([0,T],H)$ in Condition \ref{LDP} with the Skorokhod topology $\mathbb{U}_S$. We know that $(\mathbb{U},\ \mathbb{U}_S)$ is a Polish space. For $p>0$, define
 \begin{eqnarray*}
\mathcal{H}_p&=&\Big\{h:[0,T]\times\mathbb{X}\to\mathbb{R}^+:\ \exists\delta>0, s.t.\ \forall\Gamma\in\mathcal{B}([0,T])\otimes\mathcal{B}(\mathbb{X})\ with\ \nu_T(\Gamma)<\infty,\\
&&\qquad\qquad \mbox{we have } \ \int_\Gamma\exp(\delta h^p(t,y))\nu(dy)dt<\infty\Big\}.\label{Fun h}
\end{eqnarray*}

\begin{remark}\label{Remark 2}
It is easy to check that $\mathcal{H}_p\subset \mathcal{H}_{p'}$ for any $p'\in(0,p)$ and
$$
\Big\{h:[0,T]\times \mathbb{X}\to\mathbb{R}^+,\ \sup_{(t,y)\in[0,T]\times\mathbb{X}}h(t,y)<\infty\Big\}\subset\mathcal{H}_p,\ \ \forall p>0.
$$

\end{remark}

To study LDP of Eq. \eqref{eq SPDE 01}, besides the assumptions (H1)-(H4), we further need
\begin{itemize}
  \item[{\bf (H5)}] There exist $\eta_0>0$, $p\geq \Upsilon$ with $\Upsilon:=\frac{2\beta(\alpha-1)(\alpha+\eta_0)}{\alpha}\vee\frac{4(\alpha-1)(\alpha+\eta_0)}{\alpha}\vee 4\vee (\beta+2)$, and  $L_f\in L_2(\nu_T)\cap L_4(\nu_T)\cap L_{\beta+2}(\nu_T)\cap L_\Upsilon(\nu_T)\cap L_\frac{\Upsilon}{2}(\nu_T)\cap \mathcal{H}_p$ such that
  $$
  \|f(t,v,z)\|_H\leq L_f(t,z)(1+\|v\|_H),\ \ \forall (t,v,z)\in[0,T]\times V\times\mathbb{X}.
  $$

  \item[{\bf (H6)}]There exists $G_f\in L_2(\nu_T)\cap \mathcal{H}_2$ such that
  $$
  \|f(t,v_1,z)-f(t,v_2,z)\|_H\leq G_f(t,z)\|v_1-v_2\|_H,\ \ \forall (t,z)\in[0,T]\times\mathbb{X},\ \ v_1,v_2\in V.
  $$
\end{itemize}

\begin{remark} It is easy to check that
$$
L_{2}(\nu_T)\cap \Big\{h:[0,T]\times \mathbb{X}\to\mathbb{R}^+,\ \|h\|_\infty<\infty\Big\}
\subset
L_2(\nu_T)\cap L_4(\nu_T)\cap L_{\beta+2}(\nu_T)\cap L_\Upsilon(\nu_T)\cap L_\frac{\Upsilon}{2}(\nu_T)\cap \mathcal{H}_p,
$$
where $\|h\|_\infty=\sup_{(t,y)\in[0,T]\times\mathbb{X}}h(t,y)$.

\end{remark}
\vskip 0.3cm

It follows from Theorem \ref{thm solution 01} that, for every $\epsilon>0$, there exists a measurable map $\mathcal{G}^{\epsilon}$:
$\bar{\mathbb{M}}\rightarrow D([0,T]; H)$ such that, for any Poisson random measure ${\bf n}^{\epsilon^{-1}}$ on $[0,T]\times
\mathbb{X}$ with mean measure $\epsilon^{-1} \lambda_{T}\otimes \nu$ given on some probability space,
$\mathcal{G}^{\epsilon}(\epsilon {\bf n}^{\epsilon^{-1}})$ is the unique solution $X^\epsilon$ of
$(\ref{eq SPDE 01})$ with
$\widetilde{N}^{\epsilon^{-1}}$ replaced by ${\bf \widetilde{n}}^{\epsilon^{-1}}$, here ${\bf \widetilde{n}}^{\epsilon^{-1}}$ is the compensated
Poisson random measure of ${\bf n}^{\epsilon^{-1}}$.
\vskip 0.3cm

To state our main result, we need to introduce the map $\mathcal{G}^0$. Recall $S$ given in Section 2.2. For $g\in S$, consider the following deterministic PDE (the skeleton equation):
\begin{eqnarray*}
  X^{0,g}_t=x+\int_0^t\mathcal{A}(s,X^{0,g}_s)ds+\int_0^tf(s,X^{0,g}_s,z)(g(s,z)-1)\nu(dz)ds,\ \ in\ V^*.
\end{eqnarray*}
By Proposition \ref{lem 4.4} below, this equation has a unique solution $X^{0,g}\in C([0,T],H)\cap L^\alpha([0,T],V)$.
Define
\begin{eqnarray}\label{eq G0}
\mathcal{G}^0(\nu_T^g):=X^{0,g},\ \ \ \forall g\in S.
\end{eqnarray}
Let $I:\mathbb{U}=D([0,T],H)\rightarrow[0,\infty]$ be defined as in (\ref{Rate function I}). The following is the main result of this paper.
\begin{thm}\label{th main LDP}
 Assume that {\bf (H1)}-{\bf (H6)} and (\ref{eq solution 03}) hold. Then the family $\{X^\epsilon\}_{\epsilon>0}$ satisfies
an LDP on $D([0,T],H)$ with the rate function $I$ under the topology of uniform convergence.
\end{thm}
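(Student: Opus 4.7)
The strategy is to invoke Theorem \ref{LDP-main-01}: it suffices to verify the two parts of Condition \ref{LDP} (with $\bar{\mathbb{A}}^N$ replaced by $\tilde{\mathbb{A}}^N$) for $\mathcal{G}^\epsilon$ and the skeleton map $\mathcal{G}^0$ defined in \eqref{eq G0}. Well-posedness of the skeleton equation in $C([0,T],H)\cap L^\alpha([0,T],V)$ will be granted by Proposition \ref{lem 4.4}, and tightness of the controlled SPDEs will be granted by Proposition \ref{lem 4.6}. Once the LDP is established on $D([0,T],H)$ with the Skorokhod topology, continuity in $t$ of $\mathcal{G}^0(\nu_T^g)$ will allow us to upgrade to the uniform topology.

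\textbf{Verification of part (a).} Let $g_n\to g$ in $S^N$ and set $X^n=\mathcal{G}^0(\nu_T^{g_n})$, $X=\mathcal{G}^0(\nu_T^g)$. A priori bounds obtained from coercivity (H3) combined with (\ref{eq solution 01}) and the weighted $L^1$-bound $L_T(g_n)\le N$ give uniform control of $X^n$ in $L^\infty([0,T],H)\cap L^\alpha([0,T],V)$ and of $\mathcal A(\cdot,X^n)$ in $L^{\alpha/(\alpha-1)}([0,T],V^*)$. Applying the chain rule to $\|X^n_t-X_t\|_H^2$ and using local monotonicity (H2) together with (\ref{eq solution 03}) yields
\begin{equation*}
\|X^n_t-X_t\|_H^2\le\int_0^t(K_s+\rho(X_s))\|X^n_s-X_s\|_H^2\,ds+R_n(t),
\end{equation*}
where $R_n(t)$ collects the differences $\int_0^t\!\!\int_{\mathbb X}\langle f(s,X_s,z), X^n_s-X_s\rangle_H (g_n-g)(s,z)\nu(dz)ds$ and a similar cross term. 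By (H5), (H6), the compactness of $S^N$ and the uniform integrability supplied by $L_f\in\mathcal{H}_p$, $R_n\to0$ uniformly in $t$. Gronwall's inequality then gives $X^n\to X$ in $C([0,T],H)$, which implies convergence in $D([0,T],H)$.

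\textbf{Verification of part (b).} For $\varphi_\epsilon\in\tilde{\mathbb{A}}^N$, a Girsanov transformation on $\bar{N}$ identifies $Y^\epsilon:=\mathcal{G}^\epsilon(\epsilon N^{\epsilon^{-1}\varphi_\epsilon})$ as the unique solution of the controlled SPDE
\begin{equation*}
Y^\epsilon_t=x+\int_0^t\!\mathcal{A}(s,Y^\epsilon_s)ds+\epsilon\!\int_0^t\!\!\int_{\mathbb X}\!f(s,Y^\epsilon_{s-},z)\widetilde{N}^{\epsilon^{-1}\varphi_\epsilon}(ds,dz)+\int_0^t\!\!\int_{\mathbb X}\!f(s,Y^\epsilon_s,z)(\varphi_\epsilon(s,z)-1)\nu(dz)ds.
\end{equation*}
I decompose this into the deterministic drift, the small-noise martingale, and the controlled-drift piece; Proposition \ref{lem 4.6} provides uniform moment estimates and tightness of each component in a suitable (possibly weaker) topology. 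Applying the Skorokhod representation theorem, along a subsequence the triple $(Y^\epsilon,\varphi_\epsilon,\bar{N}^{\epsilon^{-1}\varphi_\epsilon})$ converges almost surely on a new probability space to $(\bar Y,\varphi,\bar \nu_T)$. Using the bound on the compensated stochastic integral (via (H5) and $\epsilon\to0$) the martingale term vanishes, and the controlled-drift piece converges to $\int_0^t\!\int_{\mathbb{X}} f(s,\bar Y_s,z)(\varphi(s,z)-1)\nu(dz)ds$ by virtue of (H6) combined with the convergence of $\varphi_\epsilon$ in $S^N$ and the exponential integrability in $\mathcal{H}_2$.

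\textbf{Identification of the limit — the main obstacle.} The crucial and most delicate step is to show that $\bar Y$ solves the skeleton equation, so that by the uniqueness in Proposition \ref{lem 4.4} we get $\bar Y=\mathcal{G}^0(\nu_T^\varphi)$. The difficulty is that (H2) is only a \emph{local} monotonicity inequality involving the unbounded weight $\rho$; consequently one cannot pass to the weak limit inside $\mathcal{A}(s,Y^\epsilon_s)$ by compactness. The remedy is a Minty-type identification: for any test process $\Phi\in L^\alpha([0,T],V)\cap L^\infty([0,T],H)$, set $\psi(s)=\exp\bigl(-\int_0^s(K_r+\rho(\Phi_r))dr\bigr)$ and use (H2) to obtain
\begin{equation*}
\mathbb E\int_0^T\psi(s)\bigl[2\langle\mathcal{A}(s,Y^\epsilon_s)-\mathcal{A}(s,\Phi_s),Y^\epsilon_s-\Phi_s\rangle_{V^*,V}-(K_s+\rho(\Phi_s))\|Y^\epsilon_s-\Phi_s\|_H^2\bigr]ds\le 0.
\end{equation*}
Passing to the limit along the Skorokhod subsequence (the quadratic functional on the left is lower semicontinuous in the weak topologies used) and then choosing $\Phi=\bar Y+\lambda w$ for $w\in V$, $\lambda\to 0$, and invoking hemicontinuity (H1) with growth (H4), one identifies the weak limit of $\mathcal{A}(\cdot,Y^\epsilon)$ as $\mathcal{A}(\cdot,\bar Y)$. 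This gives $\bar Y=\mathcal{G}^0(\nu_T^\varphi)$; since this limit is deterministic and continuous, the convergence $Y^\epsilon\Rightarrow\bar Y$ in fact holds in $D([0,T],H)$ with the uniform topology, and the theorem follows from Theorem \ref{LDP-main-01}.
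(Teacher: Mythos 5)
Your proposal is correct and follows essentially the same route as the paper: reduction to Condition \ref{LDP} via Theorem \ref{LDP-main-01}, the three-part decomposition and tightness of Proposition \ref{lem 4.6}, the Skorokhod representation, and the Minty-type identification of the drift using the exponential weight $e^{-\int_0^s(K_r+\rho(\cdot))dr}$ together with hemicontinuity, exactly as in Lemmas \ref{lem0517a1}--\ref{lem0517a4} and Theorems \ref{th con 2}--\ref{th con 1}. One minor imprecision: in part (b) the limit $\mathcal{G}^0(\nu_T^{\varphi})$ is random (since $\varphi$ is a random control), not deterministic; the upgrade to the uniform topology instead comes from the almost sure $H$-sup-norm convergence along a subsequence as in Lemma \ref{lem 5.5}.
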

\begin{proof}
According to Theorem \ref{LDP-main-01}, we only need to verify Condition \ref{LDP}, which will be done in the last section.
\end{proof}

\section{Tightness of $\cG^\ep(\ep N^{\ep^{-1}\varphi_\ep})$}
\setcounter{equation}{0}
\renewcommand{\theequation}{\thesection.\arabic{equation}}

In this section, we first state three lemmas whose proofs can be adopted from those in \cite{Budhiraja-Chen-Dupuis}, \cite{YZZ} and \cite{Budhiraja-Dupuis-Maroulas.}. Then, we establish two key estimates for the stochastic processes studied in this paper. Finally, we prove the tightness of this family of these stochastic processes.

Using similar arguments as those in proving \cite[Lemma 3.4]{Budhiraja-Chen-Dupuis}, we can establish the following lemma.

\begin{lem}\label{Lemma-Condition-0,H-1,H}
For any $h\in\mathcal{H}_p\cap L_{p'}(\nu_T)$, $p'\in(0,p]$, there exists a constant $C_{h,p,p',N}$ such that
     \begin{eqnarray}\label{Inq-G-0-1}
        C_{h,p,p',N}:=\sup_{g\in S^N}\int_{\mathbb{X}_T}h^{p'}(s,v)(g(s,v)+1)\nu(dv)ds<\infty.
     \end{eqnarray}
For any $h\in\mathcal{H}_2\cap L_{2}(\nu_T)$, there exists a constant $C_{h,N}$ such that
     \begin{eqnarray}\label{Inq-G-0-2}
        C_{h,N}:=\sup_{g\in S^N}\int_{\mathbb{X}_T}h(s,v)|g(s,v)-1|\nu(dv)ds<\infty.
     \end{eqnarray}
\end{lem}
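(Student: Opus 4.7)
The overall plan is to exploit the Legendre duality between $l(r)=r\log r-r+1$ and $l^{*}(s)=e^{s}-1$, which yields the Young-type inequality
\[
ab\le \tfrac{1}{\sigma}(e^{\sigma a}-1)+\tfrac{1}{\sigma}l(b),\qquad a,b\ge 0,\ \sigma>0,
\]
and then to control the resulting exponential integrals via the assumption that $h$ belongs to $\mathcal{H}_p\cap L_{p'}(\nu_T)$ (respectively $\mathcal{H}_2\cap L_{2}(\nu_T)$). The constraint $g\in S^N$ enters only through the bound $\int l(g)\,d\nu_T\le N$.

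For \eqref{Inq-G-0-1}, I would apply the inequality with $a=h^{p'}$ and $b=g$ and integrate to obtain
\[
\int_{\mathbb{X}_T}h^{p'}g\,d\nu_T\le\tfrac{1}{\sigma}\int_{\mathbb{X}_T}(e^{\sigma h^{p'}}-1)\,d\nu_T+\tfrac{N}{\sigma},
\]
uniformly in $g\in S^N$. Adding $\int_{\mathbb{X}_T} h^{p'}d\nu_T<\infty$ restores the full $(g+1)$ factor, so the remaining task is to verify the exponential integral is finite for some $\sigma>0$. I would do this by splitting $\mathbb{X}_T=\{h\le 1\}\cup\{h>1\}$: on $\{h\le 1\}$, and for $\sigma\le 1$, the elementary bound $e^{x}-1\le(e-1)x$ on $[0,1]$ dominates $e^{\sigma h^{p'}}-1$ by a constant multiple of $h^{p'}\in L^{1}(\nu_T)$; on $\{h>1\}$ the hypothesis $p'\le p$ gives $h^{p'}\le h^{p}$, while $\nu_T(\{h>1\})\le\int h^{p'}d\nu_T<\infty$, so the exponential integrability built into $\mathcal{H}_p$ (with $\sigma\le\delta$) applies on this set of finite $\nu_T$-measure.

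For \eqref{Inq-G-0-2}, I would split according to the size of $|g-1|$:
\[
\int_{\mathbb{X}_T} h|g-1|\,d\nu_T=\int_{\{|g-1|\le 1\}}h|g-1|\,d\nu_T+\int_{\{|g-1|>1\}}h|g-1|\,d\nu_T.
\]
On $\{|g-1|\le 1\}$ I would use the quadratic lower bound $(g-1)^{2}\le C\,l(g)$, with $C=(2\log 2-1)^{-1}$, obtained from $l(1)=l'(1)=0$, $l''(1)=1$ and a direct check on $[0,2]$, and combine it with Cauchy--Schwarz to bound this term by $\bigl(\int h^{2}d\nu_T\bigr)^{1/2}(CN)^{1/2}<\infty$ using $h\in L_{2}(\nu_T)$. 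On $\{|g-1|>1\}$, which forces $g>2$ and hence $|g-1|\le g$, I would apply the Young inequality to $hg$, obtaining a bound in terms of $\int(e^{\sigma h}-1)\,d\nu_T+N$; this is again finite by the same two-region splitting as before, now using $h\in\mathcal{H}_2\cap L_{2}(\nu_T)$ with exponent $1$ in place of $p'$.

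The main obstacle is the $\sigma$-finiteness (not finiteness) of $\nu_T$: the exponential integrability in $\mathcal{H}_p$ is guaranteed \emph{only} on subsets of finite $\nu_T$-measure, so one cannot control $\int e^{\sigma h^{p'}}d\nu_T$ by a single global bound. The splitting $\{h\le 1\}$ versus $\{h>1\}$ is therefore essential, and it is precisely the compatibility $p'\le p$ together with $h\in L_{p'}(\nu_T)$ that makes both regions work, furnishing $\nu_T(\{h>1\})<\infty$ and the dominance $h^{p'}\le h^{p}$ on the $\mathcal{H}_p$-region, and the integrable majorant $h^{p'}$ on the complementary small region.
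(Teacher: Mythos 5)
Your argument for \eqref{Inq-G-0-1} is correct: the Legendre--Young inequality $ab\le\frac{1}{\sigma}(e^{\sigma a}-1)+\frac{1}{\sigma}l(b)$ together with the splitting $\{h\le 1\}\cup\{h>1\}$ (using $h^{p'}\in L_1(\nu_T)$ on the first set and $h^{p'}\le h^p$ plus $\nu_T(\{h>1\})\le\int h^{p'}d\nu_T<\infty$ on the second) is exactly the mechanism behind \cite[Lemma 3.4]{Budhiraja-Chen-Dupuis}, to which the paper defers. The quadratic bound $(g-1)^2\le C\,l(g)$ on $\{|g-1|\le 1\}$ combined with Cauchy--Schwarz is also the standard and correct treatment of the small-deviation region in \eqref{Inq-G-0-2}.

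There is, however, a genuine gap in your treatment of the region $\{|g-1|>1\}$ in \eqref{Inq-G-0-2}. After applying Young's inequality to $hg$ you bound that term by $\frac{1}{\sigma}\int_{\mathbb{X}_T}(e^{\sigma h}-1)\,d\nu_T+\frac{N}{\sigma}$ and claim the exponential integral is finite ``by the same two-region splitting with exponent $1$ in place of $p'$.'' That splitting requires, on $\{h\le 1\}$, the bound $e^{\sigma h}-1\le(e-1)\sigma h$ with $h\in L_1(\nu_T)$ --- but the hypothesis here is only $h\in\mathcal{H}_2\cap L_2(\nu_T)$, and since $\nu_T$ is merely $\sigma$-finite, $L_2(\nu_T)\not\subset L_1(\nu_T)$. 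Indeed, for any bounded $h$ with $\int h\,d\nu_T=\infty$ and $\int h^2\,d\nu_T<\infty$ one has $h\in\mathcal{H}_2\cap L_2(\nu_T)$ (by Remark \ref{Remark 2}) yet $\int_{\mathbb{X}_T}(e^{\sigma h}-1)\,d\nu_T\ge\sigma\int h\,d\nu_T=\infty$, so the global quantity you invoke can genuinely be infinite. The repair is to keep the integral localized: either note that $g-1\le(2\log 2-1)^{-1}l(g)$ for $g\ge 2$, so that $\nu_T(\{g>2\})\le c\,N$ uniformly over $g\in S^N$ and $\int_{\{g>2,\,h\le 1\}}h|g-1|\,d\nu_T\le(2\log 2-1)^{-1}N$ with no exponential integral at all; and on the remaining set $\{g>2,\,h>1\}$, which lies inside the \emph{fixed} finite-measure set $\{h>1\}$ (finite because $h\in L_2(\nu_T)$), apply Young's inequality there and use $e^{\sigma h}\le e^{\sigma^2/(4\delta)}e^{\delta h^2}$ together with $h\in\mathcal{H}_2$. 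With this modification the proof closes.
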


Using the argument used for proving \cite[Lemmas 3.4 and 3.11]{Budhiraja-Chen-Dupuis} and \cite[(3.19)]{YZZ}, we further get
\begin{lem}\label{lem-thm2-02}
Let
$h:\ \mathbb{X}_T\rightarrow\mathbb{R}$ be a measurable function such that
   \begin{eqnarray*}
     \int_{\mathbb{X}_T}|h(s,v)|^2\nu(dv)ds<\infty,
   \end{eqnarray*}
and for all $\delta\in(0,\infty)$
   \begin{eqnarray*}
     \int_{E}\exp(\delta|h(s,v)|)\nu(dv)ds<\infty,
   \end{eqnarray*}
for all $E\in\mathcal{B}(\mathbb{X}_T)$ satisfying $\nu_T(E)<\infty$.

a). Fix $N\in\mathbb{N}$, and let $g_n,g\in S^N$ be such that $g_n\rightarrow g$ as $n\rightarrow\infty$. Then
   \begin{eqnarray*}
     \lim_{n\rightarrow\infty}\int_{\mathbb{X}_T}h(s,v)(g_n(s,v)-1)\nu(dv)ds=\int_{\mathbb{X}_T}h(s,v)(g(s,v)-1)\nu(dv)ds;
   \end{eqnarray*}

b). Fix $N\in\mathbb{N}$. Given $\epsilon>0$, there exists a compact set $K_\epsilon\subset\mathbb{X}$, such that
    \begin{eqnarray*}
      \sup_{g\in S^N}\int_{[0,T]}\int_{K_\epsilon^c}|h(s,v)||g(s,v)-1|\nu(dv)ds\leq\epsilon.
    \end{eqnarray*}

c). For every $\eta>0$,
there exists $\delta>0$, we have
such that for any $A\in \mathcal{B}([0,T])$ satisfying $\lambda_T(A)<\delta$
\begin{eqnarray}\label{G_0H-inequation}
\sup_{g\in S^N}\int_A\int_{\mathbb{X}}h(s,v)|g(s,v)-1|\nu(dv)ds\leq \eta.
\end{eqnarray}
\end{lem}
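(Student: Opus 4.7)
The three conclusions are intertwined: parts (b) and (c) are uniform equi-integrability statements in the $\mathbb{X}$- and $t$-directions respectively, and part (a) reduces to them by approximating $h$ with continuous compactly supported functions. Following the strategy of the proofs of Lemmas 3.4 and 3.11 in \cite{Budhiraja-Chen-Dupuis}, I would base everything on the Young--Fenchel inequality between $l(y)=y\log y-y+1$ and its convex conjugate $y\mapsto e^{y}-1$: for every $\sigma>0$, $a\in\mathbb{R}$ and $b\geq 0$,
\[
|a|\,b \;\leq\; \frac{1}{\sigma}l(b) + \frac{1}{\sigma}\bigl(e^{\sigma|a|}-1\bigr).
\]
Setting $a=h(s,v)$, $b=g(s,v)$ and using $|g-1|\leq g+1$, I obtain, for any $g\in S^N$ and any measurable $B\subset\mathbb{X}_T$,
\[
\int_B |h|\,|g-1|\,d\nu_T \;\leq\; \int_B |h|\,d\nu_T + \frac{N}{\sigma} + \frac{1}{\sigma}\int_B \bigl(e^{\sigma|h|}-1\bigr)\,d\nu_T,
\]
since $\int l(g)\,d\nu_T\leq N$ on $S^N$. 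The plan for (b) and (c) is then to choose $\sigma$ large (killing the entropy term) and $B$ appropriately small, forcing the other two terms to zero uniformly in $g\in S^N$.

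The principal technical hurdle is that $\nu_T$ is only $\sigma$-finite, so $\int_{\mathbb{X}_T}|h|\,d\nu_T$ and $\int_{\mathbb{X}_T}(e^{\sigma|h|}-1)\,d\nu_T$ may be infinite. I would circumvent this by splitting $h = h\mathbf{1}_{\{|h|\geq 1\}} + h\mathbf{1}_{\{|h|<1\}}$. On $\{|h|\geq 1\}$, Chebyshev gives $\nu_T(\{|h|\geq 1\})\leq\int h^2\,d\nu_T<\infty$, so by the exponential-integrability hypothesis $e^{\sigma|h|}$ lies in $L^1$ there, and dominated convergence handles shrinking subsets. On $\{|h|<1\}$, the Taylor estimate $e^{\sigma|h|}-1\leq\sigma|h|+\tfrac{\sigma^2 e^\sigma}{2}h^2$ reduces the exponential integral to an $L^1$-bound through $h^2\in L^1(\nu_T)$, while Cauchy--Schwarz controls $\int |h|\mathbf{1}_{\{|h|<1\}}$ by $\int h^2$. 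For part (b), setting $B_n=[0,T]\times K_n^c$ with $K_n\uparrow\mathbb{X}$, $\nu(K_n)<\infty$, the contributions on $\{|h|\geq 1\}\cap B_n$ and $\{|h|<1\}\cap B_n$ both vanish as $n\to\infty$. For part (c), I would first apply (b) to discard $[0,T]\times K^c$ for a compact $K$ with $\nu(K)<\infty$, then note $\nu_T(A\times K)=\lambda_T(A)\nu(K)<\infty$ and invoke absolute continuity of the integral over this finite-measure set as $\lambda_T(A)\to 0$.

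Part (a) is then a three-$\varepsilon$ argument. Choose $h_R\in C_c(\mathbb{X}_T)$ approximating $h$; for such $h_R$ the convergence $\int h_R(g_n-g)\,d\nu_T\to 0$ is the very definition of convergence in $\mathcal{M}_{FC}(\mathbb{X}_T)$, since by hypothesis $\nu_T^{g_n}\to\nu_T^g$ vaguely. The tail is controlled by
\[
\Bigl|\int(h-h_R)(g_n-g)\,d\nu_T\Bigr| \;\leq\; 2\sup_{g'\in S^N}\int|h-h_R|\,|g'-1|\,d\nu_T,
\]
which can be made arbitrarily small by combining (b), (c), and standard density of $C_c$ in $L^2$ on a large compact subset of $\mathbb{X}_T$. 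The hardest step of the whole plan is the splitting at $\{|h|\geq 1\}$ and the attendant case analysis in the $\sigma$-finite setting; once that bookkeeping is in place, parts (b), (c), and (a) follow by routine manipulations of the central Young--Fenchel estimate, exactly as in \cite{Budhiraja-Chen-Dupuis} and \cite{YZZ}.
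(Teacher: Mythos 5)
Your overall strategy (Young--Fenchel duality between $l$ and $r\mapsto e^{r}-1$, plus a decomposition of $\mathbb{X}_T$ into a finite-measure piece and its complement) is the right starting point and is indeed how the cited Lemmas 3.4 and 3.11 of Budhiraja--Chen--Dupuis proceed; the paper itself gives no proof and simply invokes those arguments. However, there is a genuine gap in your treatment of the region $\{|h|<1\}$. After bounding $|g-1|\le g+1$, your central estimate contains the term $\int_B|h|\,d\nu_T$ (and, via the Taylor bound $e^{\sigma|h|}-1\le\sigma|h|+\tfrac{\sigma^2e^{\sigma}}{2}h^2$, a second copy of it). You claim that Cauchy--Schwarz controls $\int|h|\mathbf{1}_{\{|h|<1\}}\,d\nu_T$ by $\int h^2\,d\nu_T$; this is false. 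On $\{|h|<1\}$ one has $|h|\ge h^2$, so the $L^2$ norm is a lower bound rather than an upper bound, and Cauchy--Schwarz against the constant function requires $\nu_T(B)<\infty$, which fails for $B=[0,T]\times K^c$. Concretely, take $\mathbb{X}=\mathbb{N}$ with counting measure and $h(s,n)=1/n$: all hypotheses of the lemma hold, yet $\int|h|\mathbf{1}_{\{|h|<1\}}\,d\nu_T=\infty$, so your estimate for part (b) degenerates to a trivial bound.

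The missing idea is that on the infinite-measure region one must not give away $|g-1|\le g+1$; instead one splits according to $\{|g-1|\le 1\}$ versus $\{|g-1|>1\}$. On $\{|g-1|\le1\}$ the elementary inequality $(g-1)^2\le c\,l(g)$ (valid for $g\in[0,2]$ with a universal constant $c$) combined with Cauchy--Schwarz gives $\int_B|h|\,|g-1|\,d\nu_T\le\|h\|_{L^2(B,\nu_T)}(cN)^{1/2}$, which is uniformly small once $\|h\|_{L^2([0,T]\times K^c)}$ is small --- no $L^1$ norm of $h$ ever appears. On $\{|g-1|>1\}$ one has $l(g)\ge l(2)>0$, hence $\nu_T(\{|g-1|>1\})\le N/l(2)$ uniformly over $g\in S^N$; this restores a finite-measure setting in which your Young--Fenchel and exponential-integrability argument (and Cauchy--Schwarz for the stray $\int|h|$ term) is legitimate. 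With this repair, parts (b) and (c) go through essentially as you describe, and your reduction of (a) to (b)--(c) via $C_c$ approximation and vague convergence of $\nu_T^{g_n}$ is sound, though you should note that the error term $\sup_{g'\in S^N}\int|h-h_R|\,|g'-1|\,d\nu_T$ must itself be estimated by the corrected splitting, since $h-h_R$ is only close to $0$ in $L^2$.
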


Fix $N\in\mathbb{N}$. For any $\varphi_\epsilon\in \tilde{\mathbb{A}}^N$, consider the following controlled SPDEs
\
\begin{eqnarray}\label{eq SPDE 02}
d \widetilde{X}^\epsilon_t
&=&\mathcal{A}(t,\widetilde{X}^\epsilon_t)dt
+\int_{\mathbb{X}}f(t,\widetilde{X}^\epsilon_t,z)(\varphi_\epsilon(t,z)-1)\nu(dz)dt\nonumber\\
&&               +\epsilon\int_{\mathbb{X}}f(t,\widetilde{X}^\epsilon_{t-},z)\widetilde{N}^{\epsilon^{-1}\varphi_\epsilon}(dz,dt),
\end{eqnarray}
with initial condition $\widetilde{X}^\epsilon_0=x$.

Recall $\tilde{\mathbb{A}}^N$ in Theorem \ref{LDP-main-01}. Let $\vartheta_{\epsilon}=\frac{1}{\varphi_{\epsilon}}$. The following
lemma follows from Lemma 2.3 and Section 5.2 in \cite{Budhiraja-Dupuis-Maroulas.}. Recall the notations in Section \ref{Section Representation}, we have
\begin{lem}
\begin{eqnarray*}
\mathcal{E}^{\epsilon}_{t}(\vartheta_{\epsilon})&:=&\exp\Big\{\int_{(0,t]\times \mathbb{X}\times
[0,\epsilon^{-1}\varphi_{\epsilon}]}\log(\vartheta_{\epsilon}(s,x))
\bar{N}(\;ds\;dx\;dr)\\
&&\qquad +\int_{(0,t]\times \mathbb{X}\times
[0,\epsilon^{-1}\varphi_{\epsilon}]}(-\vartheta_{\epsilon}(s,x)+1)\bar{\nu}_{T}(\;ds\;dx\;dr)\Big\}
\end{eqnarray*}
Consequently,
$$
\mathbb{Q}^{\epsilon}_{t}(G)=\int_{G}\mathcal{E}^{\epsilon}_{t}(\vartheta_{\epsilon})\;d\bar{\mathbb{P}},
\quad
\text{for}\quad G\in \mathcal{B}(\bar{\mathbb{M}})
$$
defines a probability measure on $\bar{\mathbb{M}}$.
\end{lem}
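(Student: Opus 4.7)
The plan is to recognize $\mathcal{E}^\epsilon_t(\vartheta_\epsilon)$ as the Dol\'eans-Dade exponential associated with a thinning/weighting of the Poisson random measure $\bar N$, and then to verify that it is a genuine (not merely local) martingale of mean $1$, so that $\mathbb{Q}^\epsilon_t$ is a bona fide probability measure. The key structural observation is that for $\varphi_\epsilon\in\tilde{\mathbb{A}}^N$ one has, by the definition of $\bar{\mathbb{A}}_b$, uniform upper and lower bounds: there exists $n=n(\varphi_\epsilon)$ such that $1/n\le\varphi_\epsilon(t,x,\bar m)\le n$ for $x\in K_n$ and $\varphi_\epsilon(t,x,\bar m)=1$ for $x\in K_n^c$. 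Hence $\vartheta_\epsilon=1/\varphi_\epsilon$ is bounded, bounded away from $0$, and equal to $1$ off the compact set $K_n$, so all stochastic integrals appearing below are integrable without further justification.

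First I would write $\mathcal{E}^\epsilon_t(\vartheta_\epsilon)=\exp(M^\epsilon_t-A^\epsilon_t)$, where
\begin{equation*}
M^\epsilon_t=\int_{(0,t]\times \mathbb{X}\times[0,\epsilon^{-1}\varphi_\epsilon]}\log\vartheta_\epsilon(s,x)\,\bar N(ds\,dx\,dr),\qquad
A^\epsilon_t=\int_{(0,t]\times \mathbb{X}\times[0,\epsilon^{-1}\varphi_\epsilon]}(\vartheta_\epsilon(s,x)-1)\,\bar\nu_T(ds\,dx\,dr),
\end{equation*}
and verify by It\^o's formula applied to the pure-jump semimartingale $M^\epsilon$ that $\mathcal{E}^\epsilon(\vartheta_\epsilon)$ satisfies the SDE
\begin{equation*}
\mathcal{E}^\epsilon_t(\vartheta_\epsilon)=1+\int_{(0,t]\times\mathbb{X}\times[0,\epsilon^{-1}\varphi_\epsilon]}\mathcal{E}^\epsilon_{s-}(\vartheta_\epsilon)\bigl(\vartheta_\epsilon(s,x)-1\bigr)\,\tilde{\bar N}(ds\,dx\,dr),
\end{equation*}
where $\tilde{\bar N}=\bar N-\bar\nu_T$ is the compensated measure. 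This shows that $\mathcal{E}^\epsilon(\vartheta_\epsilon)$ is a non-negative local $\bar{\mathcal{F}}_t$-martingale.

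Next I would upgrade local to true martingale. Because of the sharp two-sided bound on $\vartheta_\epsilon$ and the fact that $\vartheta_\epsilon-1$ is supported on $[0,T]\times K_n\times[0,\epsilon^{-1}n]$ with $\nu(K_n)<\infty$, the jump sizes of $\mathcal{E}^\epsilon(\vartheta_\epsilon)$ are controlled by a multiplicative factor bounded uniformly in $(s,x,r)$, and the compensator of the quadratic variation is finite. A standard localization by the stopping times $\tau_R=\inf\{t:\mathcal{E}^\epsilon_t(\vartheta_\epsilon)\ge R\}$ combined with Fatou and the a.s.-finiteness of $\sup_{t\le T}\mathcal{E}^\epsilon_t(\vartheta_\epsilon)$ (which follows from the boundedness of $M^\epsilon$ and $A^\epsilon$ on $[0,T]$) yields $\bar{\mathbb{E}}[\mathcal{E}^\epsilon_t(\vartheta_\epsilon)]=1$ for every $t\in[0,T]$; this is precisely the content of Lemma~2.3 and Section~5.2 of \cite{Budhiraja-Dupuis-Maroulas.}, specialized to the controlled intensity $\epsilon^{-1}\varphi_\epsilon$.

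Finally, given that $\mathcal{E}^\epsilon_t(\vartheta_\epsilon)\ge 0$ with unit expectation, the set function $G\mapsto\int_G\mathcal{E}^\epsilon_t(\vartheta_\epsilon)\,d\bar{\mathbb{P}}$ is $\sigma$-additive by monotone convergence and assigns mass $1$ to $\bar{\mathbb{M}}$, so it defines a probability measure $\mathbb{Q}^\epsilon_t$ on $(\bar{\mathbb{M}},\mathcal{B}(\bar{\mathbb{M}}))$. The only genuinely non-routine step is the martingale property of $\mathcal{E}^\epsilon(\vartheta_\epsilon)$; this is where the bounds $1/n\le\varphi_\epsilon\le n$ and the compact support of $\varphi_\epsilon-1$ built into the class $\tilde{\mathbb{A}}^N$ are essential, and it is also the reason Theorem~\ref{LDP-main-01} is formulated with $\tilde{\mathbb{A}}^N$ rather than the larger class $\bar{\mathbb{A}}^N$.
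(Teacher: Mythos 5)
Your argument is correct and is essentially the paper's: the authors give no proof of their own here but simply invoke Lemma 2.3 and Section 5.2 of Budhiraja--Dupuis--Maroulas, which is exactly the Dol\'eans-Dade/Girsanov computation you spell out, with the two-sided bounds and compact support built into $\tilde{\mathbb{A}}^N$ doing all the work. One small caution on your final step: Fatou plus a.s.\ finiteness of the running supremum only yields the supermartingale inequality $\bar{\mathbb{E}}[\mathcal{E}^{\epsilon}_{t}(\vartheta_{\epsilon})]\le 1$; to get equality you need uniform integrability, which here follows from dominating $\sup_{t\le T}\mathcal{E}^{\epsilon}_{t}(\vartheta_{\epsilon})$ by $n^{P}e^{C}$ with $P$ the (finite-mean) Poisson number of points of $\bar{N}$ in $[0,T]\times K_n\times[0,\epsilon^{-1}n]$ --- a domination your bounds already supply, so the gap is one of phrasing rather than substance.
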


By the fact that $\epsilon
N^{\epsilon^{-1}\varphi_{\epsilon}}$ under $\mathbb{Q}^{\epsilon}_{T}$ has the same law as
that of $\epsilon N^{\epsilon^{-1}}$ under $\bar{\mathbb{P}}$. From Theorem \ref{thm solution 01}, we see that
there exists a unique solution $\widetilde{X}^{\epsilon}$ to the controlled SPDE (\ref{eq SPDE 02}) which  satisfies (2) in Theorem \ref{thm solution 01}.

 By the definition of $\mathcal{G}^{\epsilon}$, we have
\begin{eqnarray}\label{define G-epsilon}
\widetilde{X}^{\epsilon}=\mathcal{G}^{\epsilon}\Big(\epsilon
N^{\epsilon^{-1}\varphi_{\epsilon}}\Big)
.
\end{eqnarray}

The following estimates (Lemmas \ref{lemma 4.1} and \ref{lemma 02}) will be useful.

\begin{lem}\label{lemma 4.1}
For $p=2,\ 2+\beta\ \text{or}\ \Upsilon$ in (H5), there exists $\epsilon_p,\ C_p>0$ such that
\begin{eqnarray*}
\sup_{\epsilon\in(0,\epsilon_p]}\mathbb{E}\Big(\sup_{t\in[0,T]}
\|\widetilde{X}^\epsilon_t\|^p_H\Big)
+\mathbb{E}\Big(\int_0^T\|\widetilde{X}^\epsilon_t\|^{p-2}_H
\|\widetilde{X}^\epsilon_t\|^{\alpha}_Vdt\Big)
\leq
C_{p}.
\end{eqnarray*}
\end{lem}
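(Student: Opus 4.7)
My plan is to apply Itô's formula to $\Phi(v)=\|v\|_H^p$ along the c\`adl\`ag solution $\widetilde X^\ep$ of (\ref{eq SPDE 02}), exploit the coercivity (H3) to extract the $V$-norm term on the left-hand side, and dominate the Girsanov drift, the jump compensator, and the compensated-Poisson martingale by combining (H5) with the uniform integrability estimates over $S^N$ provided by Lemma~\ref{Lemma-Condition-0,H-1,H}. A Burkholder--Davis--Gundy estimate and Gronwall's lemma then close the bound for $\ep\in(0,\ep_p]$ small enough.

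First I would write It\^o's formula in the form
\begin{align*}
\|\widetilde X^\ep_t\|_H^p &= \|x\|_H^p + p\int_0^t\|\widetilde X^\ep_s\|_H^{p-2}\<\mathcal A(s,\widetilde X^\ep_s),\widetilde X^\ep_s\>_{V^*,V}\,ds\\
&\quad + p\int_0^t\|\widetilde X^\ep_s\|_H^{p-2}\int_\mathbb{X}\<f(s,\widetilde X^\ep_s,z),\widetilde X^\ep_s\>_H(\varphi_\ep-1)\,\nu(dz)\,ds\\
&\quad + \int_0^t\!\int_\mathbb{X} R_p(\widetilde X^\ep_{s-},\ep f)\,\ep^{-1}\varphi_\ep\,\nu(dz)\,ds + M^\ep_t,
\end{align*}
where $R_p(a,b):=\Phi(a+b)-\Phi(a)-p\|a\|_H^{p-2}\<a,b\>_H$ and $M^\ep_t=\int_0^t\!\int_\mathbb{X}[\Phi(\widetilde X^\ep_{s-}+\ep f)-\Phi(\widetilde X^\ep_{s-})]\widetilde N^{\ep^{-1}\varphi_\ep}(dz,ds)$. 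Combining (H3) with $\|v\|_H^{p-2}(1+\|v\|_H^2)\le C(1+\|v\|_H^p)$ (valid since $p\ge 2$) dominates the first drift by $-\tfrac{p\th}{2}\int_0^t\|\widetilde X^\ep_s\|_H^{p-2}\|\widetilde X^\ep_s\|_V^\al\,ds+C_p\int_0^t F_s(1+\|\widetilde X^\ep_s\|_H^p)\,ds$, and the $V$-term is moved onto the left-hand side.

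Next, for the Girsanov drift the linear growth in (H5) plus Cauchy--Schwarz gives an upper bound $C_p\int_0^t(1+\|\widetilde X^\ep_s\|_H^p)\int_\mathbb{X} L_f(s,z)|\varphi_\ep(s,z)-1|\nu(dz)ds$, whose weight $s\mapsto\int_\mathbb{X} L_f|\varphi_\ep-1|\nu$ lies in $L^1([0,T])$ uniformly over $\varphi_\ep\in\tilde{\mathbb A}^N$ by Lemma~\ref{Lemma-Condition-0,H-1,H}(\ref{Inq-G-0-2}) (using $L_f\in L_2(\nu_T)\cap\cH_2$). For the jump compensator, the elementary remainder bound $|R_p(a,b)|\le C_p(\|a\|^{p-2}\|b\|^2+\|b\|^p)$ and (H5) yield
\[
\Bigl|\int_0^t\!\int_\mathbb{X}R_p(\widetilde X^\ep_{s-},\ep f)\ep^{-1}\varphi_\ep\nu\,ds\Bigr|\le C_p\ep\int_0^t(1+\|\widetilde X^\ep_s\|_H^p)\Psi^{(2)}_\ep(s)\,ds+C_p\ep^{p-1}\int_0^t(1+\|\widetilde X^\ep_s\|_H^p)\Psi^{(p)}_\ep(s)\,ds,
\]
with $\Psi^{(q)}_\ep(s):=\int_\mathbb{X} L_f^q(s,z)\varphi_\ep\nu(dz)$ having $\int_0^T\Psi^{(q)}_\ep\,ds$ pointwise (in $\omega$) bounded uniformly over $\varphi_\ep\in\tilde{\mathbb A}^N$ by Lemma~\ref{Lemma-Condition-0,H-1,H}(\ref{Inq-G-0-1}); this is precisely why (H5) requires $L_f\in L_2(\nu_T)\cap L_4(\nu_T)\cap L_{\beta+2}(\nu_T)\cap L_\Upsilon(\nu_T)\cap\cH_p$ for the three admissible values $p=2,\beta+2,\Upsilon$.

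For $M^\ep$ I would apply a Burkholder--Davis--Gundy inequality, expand the jump kernel via $|\Phi(a+b)-\Phi(a)|\le C_p(\|a\|^{p-1}\|b\|+\|b\|^p)$, plug in (H5), and then use the pointwise bounds on $\int L_f^q\varphi_\ep\nu$ from Lemma~\ref{Lemma-Condition-0,H-1,H} together with Young's inequality to pull out a fraction $\tfrac12\mathbb E\sup_{s\le t}\|\widetilde X^\ep_s\|_H^p$ onto the left-hand side, provided $\ep_p$ is small enough to make the residual coefficient less than $\tfrac12$. Collecting every contribution produces
\[
\mathbb E\sup_{s\le t}\|\widetilde X^\ep_s\|_H^p+\mathbb E\int_0^t\|\widetilde X^\ep_s\|_H^{p-2}\|\widetilde X^\ep_s\|_V^\al\,ds\le C_{p,N}\Bigl(1+\int_0^t k_N(s)\,\mathbb E\sup_{r\le s}\|\widetilde X^\ep_r\|_H^p\,ds\Bigr),
\]
with $k_N\in L^1([0,T])$ independent of $\ep\in(0,\ep_p]$, and Gronwall's lemma then delivers the claim. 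The main obstacle is the joint treatment of the jump compensator and the martingale $M^\ep$: both produce $\|f\|_H^p$-type tails whose $\ep$-scaling and $\|\widetilde X^\ep\|$-scaling must be balanced so that the non-absorbable part remains $\ep$-uniformly bounded, which is precisely what forces the full collection of integrability assumptions on $L_f$ in (H5) and the smallness of $\ep_p$.
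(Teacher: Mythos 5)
Your proposal follows essentially the same route as the paper: It\^o's formula for $\|\cdot\|_H^p$, coercivity (H3) to extract the $V$-norm term, the $S^N$-uniform bounds of Lemma \ref{Lemma-Condition-0,H-1,H} applied to the Girsanov drift and jump compensator via (H5), a BDG-plus-Young absorption of the martingale for $\epsilon\le\epsilon_p$, and Gronwall. The only (immaterial) difference is the regrouping of the jump terms — the paper keeps the first-order term as the compensated martingale and the Taylor remainder against $N^{\epsilon^{-1}\varphi_\epsilon}$, while you compensate the full increment and integrate the remainder against the compensator — which is the same It\^o identity rearranged and leads to the same estimates.
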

\begin{proof}
By ${\rm It\hat{o}'s}$ formula, we have
\begin{equation}\label{eq Xp}
\|\widetilde{X}^\epsilon_t\|^p_H
=\|x\|^p_H+I_1(t)+I_2(t)+I_3(t)+I_4(t),
\end{equation}
where
\[I_1(t)=\frac{p}{2}\int_0^t\|\widetilde{X}^\epsilon_s\|^{p-2}_H\Big(2\langle \mathcal{A}(s,\widetilde{X}^\epsilon_s), \widetilde{X}^\epsilon_s\rangle_{V^*,V}\Big)ds,\]
\[I_2(t)=   \int_0^t\int_{\mathbb{X}}
      p\|\widetilde{X}^\epsilon_{s-}\|^{p-2}_H\langle \epsilon f(s,\widetilde{X}^\epsilon_{s-},z), \widetilde{X}^\epsilon_{s-}\rangle_{H,H}\widetilde{N}^{\epsilon^{-1}
      \varphi_\epsilon}(dz,ds),\]
\begin{eqnarray*}
I_4(t)&=&
      \int_0^t\int_{\mathbb{X}}
          \Big[
            \|\widetilde{X}^\epsilon_{s-}+\epsilon f(s,\widetilde{X}^\epsilon_{s-},z)\|^p_{H}-\|\widetilde{X}^\epsilon_{s-}\|^p_{H}
            \\
            &&\ \ \ \ \ \ \ \ \ \ \ -
            p\|\widetilde{X}^\epsilon_{s-}\|^{p-2}_{H}\langle \epsilon f(s,\widetilde{X}^\epsilon_{s-},z), \widetilde{X}^\epsilon_{s-}\rangle_{H,H}
            \Big]
            N^{\epsilon^{-1}\varphi_\epsilon}(dz,ds),
            \end{eqnarray*}
and
\[I_4(t)=            p\int_0^t \|\widetilde{X}^\epsilon_s\|^{p-2}_{H}\langle \int_{\mathbb{X}}f(s,\widetilde{X}^\epsilon_s,z)(\varphi_\epsilon(s,z)-1),
\widetilde{X}^\epsilon_s\rangle_{H,H}\nu(dz)ds.\]

Note that by (H3),
\begin{eqnarray}\label{eq lemma2 I1}
  I_1(t)
&\leq&
   \frac{p}{2}\int_0^t\|\widetilde{X}^\epsilon_s\|^{p-2}_H\Big( F_s+F_s\|\widetilde{X}^\epsilon_s\|^2_H-\theta \|\widetilde{X}^\epsilon_s\|^\alpha_V\Big)ds\nonumber\\
&\leq&
   -\frac{\theta p}{2}\int_0^t\|\widetilde{X}^\epsilon_s\|^{p-2}_H\|\widetilde{X}^\epsilon_s\|^\alpha_Vds
      +
   \frac{p}{2}\int_0^t\Big[\Big(\|\widetilde{X}^\epsilon_s\|^{p}_H+1\Big)F_s+F_s\|\widetilde{X}^\epsilon_s\|^{p}_H\Big]ds\nonumber\\
&\leq&
   -\frac{\theta p}{2}\int_0^t\|\widetilde{X}^\epsilon_s\|^{p-2}_H\|\widetilde{X}^\epsilon_s\|^\alpha_Vds
      +
    \frac{p}{2}\int_0^tF_sds+\int_0^t pF_s\|\widetilde{X}^\epsilon_s\|^{p}_H ds,
\end{eqnarray}
and by (H5),
\begin{eqnarray}\label{eq lemma2 I4}
  I_4(t)
&\leq&
  p\int_0^t \|\widetilde{X}^\epsilon_s\|^{p-1}_{H} \int_{\mathbb{X}}\|f(s,\widetilde{X}^\epsilon_s,z)\|_H|(\varphi_\epsilon(s,z)-1)|\nu(dz)ds\nonumber\\
&\leq&
  p\int_0^t \|\widetilde{X}^\epsilon_s\|^{p-1}_{H}(1+\|\widetilde{X}^\epsilon_s\|_{H}) \int_{\mathbb{X}}L_f(s,z)|(\varphi_\epsilon(s,z)-1)|\nu(dz)ds\nonumber\\
&\leq&
  p\int_0^t\int_{\mathbb{X}}L_f(s,z)|(\varphi_\epsilon(s,z)-1)|\nu(dz)ds\nonumber\\
    &&+
  2p\int_0^t \|\widetilde{X}^\epsilon_s\|^{p}_{H}\int_{\mathbb{X}}L_f(s,z)|(\varphi_\epsilon(s,z)-1)|\nu(dz)ds.
\end{eqnarray}
By Gronwall's inequality, combining (\ref{eq Xp}) (\ref{eq lemma2 I1}), (\ref{eq lemma2 I4}) and Lemma \ref{Lemma-Condition-0,H-1,H},
\begin{eqnarray}\label{eq 2}
     &&\|\widetilde{X}^\epsilon_t\|^{p}_{H}+\frac{\theta p}{2}\int_0^t\|\widetilde{X}^\epsilon_s\|^{p-2}_H
     \|\widetilde{X}^\epsilon_s\|^\alpha_Vds\\
&\leq&\exp\Big(p\int_0^TF_sds+2pC_{L_f,N}\Big)\times
     \Big(
        \|x\|^p_H+\frac{p}{2}\int_0^TF_sds
          +
        \sup_{s\in[0,t]}|I_2(s)|+ pC_{L_f,N}\nonumber\\
        && \ \ \  +
        \int_0^t\int_{\mathbb{X}}c_p
            \Big(
               \|\widetilde{X}^\epsilon_{s-}\|^{p-2}_{H}\|\epsilon f(s,\widetilde{X}^\epsilon_{s-},z)\|^2_H + \|\epsilon f(s,\widetilde{X}^\epsilon_{s-},z)\|^p_H
            \Big)
        N^{\epsilon^{-1}\varphi_\epsilon}(dz,ds)
             \Big),\nonumber
\end{eqnarray}
we have used (4.9) in \cite{B-Liu-Zhu} to $I_3$, i.e.
$$
\Big|\|x+h\|_H^p-\|x\|_H^p-p\|x\|^{p-2}_H\langle x,h\rangle_{H,H}\Big|\leq c_p\Big(\|x\|^{p-2}_H\|h\|^2_H+\|h\|^p_H\Big),
\ \ \forall x,h\in H.
$$

By Lemma \ref{Lemma-Condition-0,H-1,H}, we have
\begin{eqnarray}\label{eq 2-01}
 &&  \mathbb{E}\Big(\sup_{s\in[0,T]}|I_2(s)|\Big)\nonumber\\
&\leq&
    \mathbb{E}\Big( \int_0^T\int_{\mathbb{X}}
       \epsilon^2 p^2\|\widetilde{X}^\epsilon({s-})\|^{2p-4}_H\langle f(s,\widetilde{X}^\epsilon_{s-},z), \widetilde{X}^\epsilon({s-})\rangle^2_{H,H}N^{\epsilon^{-1}\varphi_\epsilon}(dz,ds)\Big)^{1/2}\nonumber\\
&\leq&
   \mathbb{E}\Big( \int_0^T\int_{\mathbb{X}}
        \epsilon^2 p^2\|\widetilde{X}^\epsilon({s-})\|^{2p-2}_H L^2_f(s,z)\Big(\|\widetilde{X}^\epsilon_{{s-}}\|_H+1\Big)^2
                   N^{\epsilon^{-1}\varphi_\epsilon}(dz,ds)
             \Big)^{1/2}\nonumber\\
&\leq&
  \mathbb{E}\Big( \sup_{s\in[0,T]}\|\widetilde{X}^\epsilon_s\|^{p}_H
           \cdot
         \epsilon^2 p^2\int_0^T\int_{\mathbb{X}}
                  \|\widetilde{X}^\epsilon_{s-}\|^{p-2}_H L^2_f(s,z)\Big(\|\widetilde{X}^\epsilon_{s-}\|_H+1\Big)^2
                   N^{\epsilon^{-1}\varphi_\epsilon}(dz,ds)
             \Big)^{1/2}\nonumber\\
&\leq&
  \frac{1}{4}\mathbb{E}\Big( \sup_{s\in[0,T]}\|\widetilde{X}^\epsilon_s\|^{p}_H\Big)\nonumber\\
&&           +
         16\epsilon p^2\mathbb{E}\Big[\Big(\sup_{s\in[0,T]}\|\widetilde{X}^\epsilon_s\|^{p}_H+1\Big)
                   \int_0^T\int_{\mathbb{X}}
                  L^2_f(s,z)\varphi_\epsilon(s,z)\nu(dz)ds
             \Big]\nonumber\\
&\leq&
     (\frac{1}{4}+16\epsilon p^2 C_{L_f,2,2,N})\mathbb{E}\Big( \sup_{s\in[0,T]}\|\widetilde{X}^\epsilon_s\|^{p}_H\Big)
        +
      16\epsilon p^2 C_{L_f,2,2,N}.
\end{eqnarray}

On the other hand, by Lemma \ref{Lemma-Condition-0,H-1,H} again, we have
\begin{eqnarray}\label{eq 2-02}
&&    \mathbb{E}\Big(\int_0^T\int_{\mathbb{X}}c_p
                          \|\widetilde{X}^\epsilon_s\|^{p-2}_{H}\|\epsilon f(s,\widetilde{X}^\epsilon_s,z)\|^2_H
                         N^{\epsilon^{-1}\varphi_\epsilon}(dz,ds)\Big)\nonumber\\
&\leq&
    \epsilon c_p\mathbb{E}\Big(\int_0^T\int_{\mathbb{X}}
                          \|\widetilde{X}^\epsilon_s\|^{p-2}_{H}L^2_f(s,z)(\|\widetilde{X}^\epsilon_s\|_H+1)^2\varphi_\epsilon(s,z)\nu(dz)ds
                         \Big)\nonumber\\
&\leq&
    \epsilon c_p \mathbb{E}\Big[\Big(\sup_{s\in[0,T]}\|\widetilde{X}^\epsilon_s\|^{p}_H+1\Big)\int_0^T\int_{\mathbb{X}}
                             L^2_f(s,z)\varphi_\epsilon(s,z)\nu(dz)ds\Big]\nonumber\\
&\leq&
    \epsilon c_p C_{L_f,p,p,N}\mathbb{E}\Big(\sup_{s\in[0,T]}\|\widetilde{X}^\epsilon_s\|^{p}_H\Big)
    +
    \epsilon c_p C_{L_f,2,2,N},
\end{eqnarray}
and
\begin{eqnarray}\label{eq 2-03}
&&\mathbb{E}\Big(\int_0^T\int_{\mathbb{X}}c_p
               \|\epsilon f(s,\widetilde{X}^\epsilon_s,z)\|^p_H
        N^{\epsilon^{-1}\varphi_\epsilon}(dz,ds)
         \Big)\nonumber\\
&=&
  \epsilon^{p-1} c_p \mathbb{E}\Big(\int_0^T\int_{\mathbb{X}} \|f(s,\widetilde{X}^\epsilon_s,z)\|^p_H \varphi_\epsilon(s,z)\nu(dz)ds \Big)\nonumber\\
&\leq&
  \epsilon^{p-1} c_p \mathbb{E}\Big[\Big(\sup_{s\in[0,T]}\|\widetilde{X}^\epsilon_s\|^{p}_H+1\Big)\int_0^T\int_{\mathbb{X}}
                             L^p_f(s,z)\varphi_\epsilon(s,z)\nu(dz)ds\Big]\nonumber\\
&\leq&
    \epsilon^{p-1} c_p C_{L_f,p,p,N} \mathbb{E}\Big(\sup_{s\in[0,T]}\|\widetilde{X}^\epsilon_s\|^{p}_H\Big)
    +
    \epsilon^{p-1} c_p C_{L_f,p,p,N}.
\end{eqnarray}

Combining (\ref{eq 2})--(\ref{eq 2-03}), we obtain that there exists $\epsilon_p>0$ such that
\begin{eqnarray*}
  \sup_{\epsilon\in(0,\epsilon_p]}
       \Big[
    \mathbb{E}\Big(\sup_{s\in[0,T]}\|\widetilde{X}^\epsilon_s\|^{p}_H\Big)
    +
    \frac{\theta p}{2}\mathbb{E}\Big(\int_0^T\|\widetilde{X}^\epsilon_s\|^{p-2}_H\|\widetilde{X}^\epsilon_s\|^\alpha_Vds\Big)
       \Big]
 \leq
   C_{N,p,T,\|x\|_H,\int_0^TF_sds,L_f}.
\end{eqnarray*}
The proof is complete.
\end{proof}

\begin{lem}\label{lemma 02}
For $p=\frac{\Upsilon}{2}$, there exist $C_p$ such that
\begin{eqnarray*}
\sup_{\epsilon\in(0,\epsilon_{2p}]}\mathbb{E}
\Big(\int_0^T\|\widetilde{X}^\epsilon_s\|^\alpha_V ds\Big)^p
\leq
C_p.
\end{eqnarray*}
Here $\epsilon_{2p}$ comes from Lemma \ref{lemma 4.1}.

\end{lem}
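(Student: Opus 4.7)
The plan is to isolate $\int_0^T\|\widetilde X^\epsilon_s\|_V^\alpha\,ds$ from the $p=2$ Itô expansion of $\|\widetilde X^\epsilon_t\|_H^2$ (already derived in the proof of Lemma \ref{lemma 4.1}) and then raise both sides to the $p$-th power and take expectations, closing the bound using the $\Upsilon = 2p$ moment that Lemma \ref{lemma 4.1} already supplies.

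Concretely, applying It\^o to $\|\widetilde X^\epsilon_t\|_H^2$, using coercivity (H3) on the drift, and dropping the nonnegative quantity $\|\widetilde X^\epsilon_T\|_H^2$ from the left yields the pathwise inequality
\begin{align*}
\theta \int_0^T \|\widetilde X^\epsilon_s\|_V^\alpha\,ds
\le \|x\|_H^2 + \int_0^T F_s\,ds + \int_0^T F_s\|\widetilde X^\epsilon_s\|_H^2\,ds
+ \sup_{t\le T}|I_2(t)| + I_3(T) + |I_4(T)|,
\end{align*}
where $I_2,I_3,I_4$ are the $p=2$ instances of the terms from the proof of Lemma \ref{lemma 4.1}; note that $I_3(T)=\epsilon^2\int_0^T\!\!\int_{\mathbb X}\|f(s,\widetilde X^\epsilon_{s-},z)\|_H^2\,N^{\epsilon^{-1}\varphi_\epsilon}(dz\,ds)\ge 0$. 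Raising this inequality to the power $p=\Upsilon/2$ and taking expectation, the first three terms are controlled deterministically: $F\in L^1$ handles the first two, and H\"older combined with $\mathbb E\sup_{t\le T}\|\widetilde X^\epsilon_t\|_H^{\Upsilon}\le C_\Upsilon$ from Lemma \ref{lemma 4.1} (applied with $p=\Upsilon=2p$, legitimate since $\epsilon_{2p}$ is chosen accordingly) handles the third. Similarly, $|I_4(T)|\le 2(1+\sup_{t\le T}\|\widetilde X^\epsilon_t\|_H^2)\int_0^T\!\!\int_{\mathbb X}L_f(s,z)|\varphi_\epsilon(s,z)-1|\nu(dz)\,ds$, whose last factor is uniformly bounded on $\tilde{\mathbb A}^N$ by \eqref{Inq-G-0-2} applied to $L_f\in\mathcal H_2\cap L_2(\nu_T)$, so that $\mathbb E|I_4(T)|^p\le C(1+C_\Upsilon)$.

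The real work is in bounding $\mathbb E\sup_t|I_2(t)|^p$ and $\mathbb E I_3(T)^p$. Since $p\ge 2$, the BDG inequality for purely discontinuous martingales applied to $I_2$ bounds $\mathbb E\sup_t|I_2(t)|^p$ by $C_p\,\mathbb E\bigl(\int_0^T\!\!\int_{\mathbb X} 4\epsilon^2\langle f(s,\widetilde X^\epsilon_{s-},z),\widetilde X^\epsilon_{s-}\rangle_H^2\,N^{\epsilon^{-1}\varphi_\epsilon}(dz\,ds)\bigr)^{p/2}$. Writing $N^{\epsilon^{-1}\varphi_\epsilon}=\widetilde N^{\epsilon^{-1}\varphi_\epsilon}+\epsilon^{-1}\varphi_\epsilon\,\nu\otimes\lambda_T$, applying $(a+b)^{p/2}\le C(a^{p/2}+b^{p/2})$, dominating $\|f\|_H$ by $L_f(1+\|\widetilde X^\epsilon\|_H)$ from (H5), and invoking Lemma \ref{Lemma-Condition-0,H-1,H} with exponents $p'\in\{p,2\}$ on $L_f^2$ (which is why (H5) requires $L_f\in L_\Upsilon(\nu_T)\cap L_{\Upsilon/2}(\nu_T)\cap\mathcal H_p$) reduces everything to $\mathbb E\sup_t\|\widetilde X^\epsilon_t\|_H^{\Upsilon}\le C_\Upsilon$. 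The term $I_3(T)^p$ is handled by the identical martingale--plus--compensator splitting, directly on the integrand $\epsilon^2\|f\|_H^2$.

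The principal obstacle is matching the moment exponents so that the estimate closes without circularity; the definition $\Upsilon\ge \frac{2\beta(\alpha-1)(\alpha+\eta_0)}{\alpha}\vee \frac{4(\alpha-1)(\alpha+\eta_0)}{\alpha}\vee 4\vee(\beta+2)$ and the integrability hypothesis on $L_f$ in (H5) are tailored so that the $2p$-th $H$-moment demanded by the BDG reduction is precisely the one provided by Lemma \ref{lemma 4.1}, after which all constants are independent of $\epsilon\in(0,\epsilon_{2p}]$ and the claim follows.
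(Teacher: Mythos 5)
Your proposal is correct and follows essentially the same route as the paper: both isolate $\theta\int_0^T\|\widetilde X^\epsilon_s\|_V^\alpha\,ds$ from the $p=2$ It\^o identity, raise to the power $p=\Upsilon/2$, control the jump terms by a BDG/Kunita-type maximal inequality combined with the martingale-plus-compensator splitting and Lemma \ref{Lemma-Condition-0,H-1,H}, and close via the $\Upsilon$-th moment bound of Lemma \ref{lemma 4.1}. The only cosmetic differences are that the paper first applies Gronwall (so the $\int F_s\|\widetilde X^\epsilon_s\|_H^2\,ds$ and $I_4$ contributions are already absorbed into constants in \eqref{eq 2}) and cites Kunita's first inequality outright where you sketch its derivation.
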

\begin{proof}
Consider $p=2$ in (\ref{eq 2}), we have
\begin{equation}\label{eq lemma2 01}\theta \int_0^t\|\widetilde{X}^\epsilon_s\|^\alpha_Vds
\le C_{N,T,\int_0^TF_sds,L_f}\(\|x\|^2_H+\int_0^TF_sds
          +
        \sup_{s\in[0,t]}|I_2(s)|+2C_{L_f,N}+J(t)\),
        \end{equation}
where
\[J(t)=\int_0^t\int_{\mathbb{X}}c_2
            \Big(
                \|\epsilon f(s,\widetilde{X}^\epsilon_{s-},z)\|^2_H
            \Big)
        N^{\epsilon^{-1}\varphi_\epsilon}(dz,ds).\]
 In the following calculations, we take $p=\frac{\Upsilon}{2}$. Note that
 \begin{eqnarray*}
 \EE\(|J(t)|^p\)
 &\le &c_p \mathbb{E}\Big(\Big|\int_0^T\int_{\mathbb{X}}
            \Big(
                \|\epsilon f(s,\widetilde{X}^\epsilon_{s-},z)\|^2_H
            \Big)
        \widetilde{N}^{\epsilon^{-1}\varphi_\epsilon}(dz,ds)\Big|^p
         \Big)\nonumber\\
&&      +
    c_p \mathbb{E}\Big(\Big|\int_0^T\int_{\mathbb{X}}
            \Big(
                \epsilon\| f(s,\widetilde{X}^\epsilon_s,z)\|^2_H
            \Big)\varphi_\epsilon(s,z)\nu(dz)ds
        \Big|^p
         \Big).
\end{eqnarray*}
 By Kunita's first inequality (refer to Theorem 4.4.23 in \cite{David}),
  we can continue with
 \begin{eqnarray*}
 \EE\(|J(t)|^p\)
 &\le &c_p \epsilon^{2p-1}\mathbb{E}\Big(\int_0^T\int_{\mathbb{X}}\| f(s,\widetilde{X}^\epsilon_s,z)\|^{2p}_H
            \varphi_\epsilon(s,z)\nu(dz)ds\Big)\nonumber\\
&&                        +
            c_p\epsilon^{3p/2}\mathbb{E}\Big(\int_0^T\int_{\mathbb{X}}\| f(s,\widetilde{X}^\epsilon_s,z)\|^{4}_H
            \varphi_\epsilon(s,z)\nu(dz)ds\Big)^{p/2}\nonumber\\
            &&+
            c_p\epsilon^{p}\mathbb{E}\Big(\int_0^T\int_{\mathbb{X}}\| f(s,\widetilde{X}^\epsilon_s,z)\|^{2}_H
            \Big)\varphi_\epsilon(s,z)\nu(dz)ds\Big)^{p}.
\end{eqnarray*}
Thus, by Lemma \ref{Lemma-Condition-0,H-1,H}, we have
\begin{eqnarray}\label{eq lemma2 02}
&&\EE\(|J(t)|^p\)\\
&\leq&
c_p\mathbb{E}\Big(1+\sup_{s\in[0,T]}\|\widetilde{X}^\epsilon_s\|_H\Big)^{2p}
\Big(
  \epsilon^{2p-1}\sup_{\varphi\in S^N}\int_0^T\int_{\mathbb{X}}L_{f}^{2p}(s,z)\varphi(s,z)\nu(dz)ds\nonumber\\
&& \ \ \ \ \ \ \ \ \ \ \ \ \ \ \ \ \ \ \ \ \ \ \ \ \ \ \ \ \ \ \ \ \  +
  \epsilon^{3p/2}\Big(\sup_{\varphi\in S^N}\int_0^T\int_{\mathbb{X}}L_{f}^4(s,z)\varphi(s,z)\nu(dz)ds\Big)^{p/2}\nonumber\\
  && \ \ \ \ \ \ \ \ \ \ \ \ \ \ \ \ \ \ \ \ \ \ \ \ \ \ \ \ \ \ \ \ \  +
    \epsilon^{p}\Big(\sup_{\varphi\in S^N}\int_0^T\int_{\mathbb{X}}L_{f}^2(s,z)\varphi(s,z)\nu(dz)ds\Big)^{p}
\Big)\nonumber\\
&\leq&
c_p\mathbb{E}\Big(1+\sup_{s\in[0,T]}\|\widetilde{X}^\epsilon_s\|\Big)^{2p}
\Big(
    \epsilon^{2p-1}C_{L_f,2p,2p,N}+\epsilon^{3p/2}\Big(C_{L_f,4,4,N}\Big)^{p/2}+\epsilon^{p}\Big(C_{L_f,2,2,N}\Big)^{p}
\Big).\nonumber
\end{eqnarray}
By Kunita's first inequality again,
\begin{eqnarray}\label{eq lemma2 03}
  &&\mathbb{E}\(\sup_{s\in[0,T]}|I_2(s)|^p\)\nonumber\\
&\leq&
 c_p\epsilon^{p-1}\mathbb{E}\Big(\int_0^T\int_{\mathbb{X}}\Big|\langle f(s,\widetilde{X}^\epsilon_{s},z), \widetilde{X}^\epsilon_s\rangle_{H,H}\Big|^p\varphi_\epsilon(s,z)\nu(dz)ds\Big)\nonumber\\
 &&
 +
 c_p\epsilon^{p/2}\mathbb{E}\Big(\int_0^T\int_{\mathbb{X}}\Big|\langle f(s,\widetilde{X}^\epsilon_{s},z), \widetilde{X}^\epsilon_s\rangle_{H,H}\Big|^2\varphi_\epsilon(s,z)\nu(dz)ds\Big)^{p/2}\nonumber\\
&\leq&
  c_p\epsilon^{p-1}\mathbb{E}\Big(\int_0^T\int_{\mathbb{X}}\|\widetilde{X}^\epsilon_{s}\|^p_HL_f^p(s,z)\Big(1+\|\widetilde{X}^\epsilon_{s}\|_H\Big)^p
          \varphi_\epsilon(s,z)\nu(dz)ds\Big)\nonumber\\
 &&+
 c_p\epsilon^{p/2}\mathbb{E}\Big(\int_0^T\int_{\mathbb{X}}\|\widetilde{X}^\epsilon_{s}\|^2_HL_f^2(s,z)
  \Big(1+\|\widetilde{X}^\epsilon_{s}\|_H\Big)^2\varphi_\epsilon(s,z)\nu(dz)ds\Big)^{p/2}\nonumber\\
&\leq&
  c_p\epsilon^{p-1}\mathbb{E}\Big(1+\sup_{s\in[0,T]}\|\widetilde{X}^\epsilon_s\|_H\Big)^{2p}
   \sup_{\varphi\in S^N}\int_0^T\int_{\mathbb{X}}L_{f}^{p}(s,z)\varphi(s,z)\nu(dz)ds\nonumber\\
   &&+
   c_p\epsilon^{p/2}\mathbb{E}\Big(1+\sup_{s\in[0,T]}\|\widetilde{X}^\epsilon_s\|_H\Big)^{2p}
   \Big(\sup_{\varphi\in S^N}\int_0^T\int_{\mathbb{X}}L_{f}^{2}(s,z)\varphi(s,z)\nu(dz)ds\Big)^{p/2}\nonumber\\
&\leq&
 c_p\mathbb{E}\Big(1+\sup_{s\in[0,T]}\|\widetilde{X}^\epsilon_s\|\Big)^{2p}
\Big(
    \epsilon^{p-1}C_{L_f,p,p,N}+\epsilon^{p/2}\Big(C_{L_f,2,2,N}\Big)^{p/2}
\Big).
\end{eqnarray}
Lemma \ref{lemma 4.1} and (\ref{eq lemma2 01})--(\ref{eq lemma2 03}) imply this lemma.

\end{proof}

Finally, we prove the tightness of $\{\widetilde{X}^\epsilon\}$.

\begin{prop}\label{lem 4.6} For some $\epsilon_0>0$, $\{\widetilde{X}^\epsilon\}_{\epsilon\in(0,\epsilon_0]}$ is tight in $D([0,T], V^*)$ with the Skorohod topology.
Moreover, set
$$M^\epsilon_t=\int_0^t\int_{\mathbb{X}}
     \epsilon f(s,\widetilde{X}^\epsilon_{s-},z)\widetilde{N}^{\epsilon^{-1}\varphi_\epsilon}(dz,ds),$$
$$Z^\epsilon_t=\int_0^t\int_{\mathbb{X}}f(s,\widetilde{X}^\epsilon_s,z)(\varphi_\epsilon(s,z)-1)\nu(dz)ds,$$
$$Y^\epsilon_t=\int_0^t\mathcal{A}(s,\widetilde{X}^\epsilon_s)ds,$$
then
\begin{itemize}
 \item[(a)] $\lim_{\epsilon\rightarrow0}\mathbb{E}\Big(\sup_{t\in[0,T]}\Big\|M^\epsilon_t\Big\|^2_H\Big)=0$,

 \item[(b)] $(Z^\epsilon_t)_{0 \le t \le T}$ is tight in $C([0,T],V^*)$,

 \item[(c)] $(Y^\epsilon_t)_{0 \le t \le T}$ is tight in $C([0,T],V^*)$.
\end{itemize}
 \end{prop}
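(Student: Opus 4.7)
The plan is to verify (a), (b), (c) separately and then combine them into tightness of $\widetilde{X}^\epsilon=x+Y^\epsilon+Z^\epsilon+M^\epsilon$ in $D([0,T],V^*)$. Part (a) is a direct moment estimate for the $H$-martingale; (b) rests on the small-increment property of Lemma \ref{lem-thm2-02}(c); and (c) uses the growth bound (H4) together with Lemmas \ref{lemma 4.1} and \ref{lemma 02}. The role of $\Upsilon\ge\beta+2$ built into (H5) will be crucial in (c).

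For (a), I would apply Doob's $L^2$-maximal inequality to the $H$-valued c\`adl\`ag martingale $M^\epsilon$, whose compensator comes from replacing $N^{\epsilon^{-1}\varphi_\epsilon}$ by its mean $\epsilon^{-1}\varphi_\epsilon(s,z)\nu(dz)\,ds$:
\[
\EE\sup_{t\in[0,T]}\|M^\epsilon_t\|_H^2 \le 4\epsilon\,\EE\int_0^T\!\!\int_\mathbb{X}\|f(s,\widetilde{X}^\epsilon_s,z)\|_H^2\,\varphi_\epsilon(s,z)\,\nu(dz)\,ds.
\]
Applying (H5) with $(1+\|\widetilde{X}^\epsilon_s\|_H)^2\le 2(1+\|\widetilde{X}^\epsilon_s\|_H^2)$, then Lemma \ref{Lemma-Condition-0,H-1,H} to control $\sup_{\varphi\in S^N}\int L_f^2\varphi\,d\nu\,ds$, and Lemma \ref{lemma 4.1} with $p=2$ to control $\EE\sup_s\|\widetilde{X}^\epsilon_s\|_H^2$, the right-hand side becomes $O(\epsilon)$, proving (a).

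For (b), using $\|\cdot\|_{V^*}\le C\|\cdot\|_H$ and (H5),
\[
\|Z^\epsilon_t-Z^\epsilon_s\|_{V^*} \le C\bigl(1+\sup_{u\in[0,T]}\|\widetilde{X}^\epsilon_u\|_H\bigr)\int_s^t\!\!\int_\mathbb{X} L_f(u,z)\,|\varphi_\epsilon(u,z)-1|\,\nu(dz)\,du.
\]
Since $L_f\in L_2(\nu_T)\cap\mathcal{H}_2$, Lemma \ref{lem-thm2-02}(c) makes the last double integral uniformly small on $S^N$ when $|t-s|$ is small; combined with the $L^2(\Omega)$-bound from Lemma \ref{lemma 4.1} this gives the uniform modulus of continuity $\sup_\epsilon\PP(\sup_{|t-s|\le\delta}\|Z^\epsilon_t-Z^\epsilon_s\|_{V^*}>\eta)\to 0$ as $\delta\to 0$. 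Since $Z^\epsilon_0=0$ is tight and, in the envisaged applications, $V\subset H$ is compact so that bounded sets of $H$ are relatively compact in $V^*$, Arzel\`a--Ascoli yields tightness of $\{Z^\epsilon\}$ in $C([0,T],V^*)$.

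For (c), by (H4) and H\"older's inequality in time with conjugate exponent $\alpha/(\alpha-1)$,
\[
\|Y^\epsilon_t-Y^\epsilon_s\|_{V^*} \le (t-s)^{1/\alpha}\Bigl(\int_0^T(F_u+C\|\widetilde{X}^\epsilon_u\|_V^\alpha)(1+\|\widetilde{X}^\epsilon_u\|_H^\beta)\,du\Bigr)^{(\alpha-1)/\alpha}.
\]
I would then bound the expectation of this random factor via a second H\"older step in $\omega$ with exponents $(\Upsilon/(\Upsilon-2),\,\Upsilon/2)$: because $\Upsilon\ge\beta+2$, one has $\beta\cdot\Upsilon/(\Upsilon-2)\le\Upsilon$, so Lemma \ref{lemma 4.1} controls $\EE\sup_u\|\widetilde{X}^\epsilon_u\|_H^{\beta\Upsilon/(\Upsilon-2)}$ while Lemma \ref{lemma 02} controls $\EE\bigl(\int_0^T\|\widetilde{X}^\epsilon_u\|_V^\alpha du\bigr)^{\Upsilon/2}$; the $F_u$-term is immediate from $F\in L^1([0,T])$ and the $L^\beta$-bound of Lemma \ref{lemma 4.1}. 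This produces a Kolmogorov-type modulus $\EE\|Y^\epsilon_t-Y^\epsilon_s\|_{V^*}^\kappa\le C(t-s)^{\kappa/\alpha}$ for some $\kappa>1$, and tightness of $\{Y^\epsilon\}$ in $C([0,T],V^*)$ follows as in (b). The main obstacle I anticipate is exactly this exponent bookkeeping: the two H\"older factors must land inside the moment ranges controlled by Lemmas \ref{lemma 4.1}--\ref{lemma 02}, which is precisely why (H5) is engineered with the lower bound on $\Upsilon$ (the other terms in the $\vee$ defining $\Upsilon$ will serve the analogous role in later estimates).
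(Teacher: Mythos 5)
Parts (a) and (b) are correct and essentially identical to the paper's argument (Doob plus (H5) plus Lemmas \ref{Lemma-Condition-0,H-1,H} and \ref{lemma 4.1} for (a); equicontinuity via Lemma \ref{lem-thm2-02}(c), an $H$-bound, the compact embedding $H\hookrightarrow V^*$ and Ascoli--Arzel\`a for (b)). The problem is in (c), where your exponent bookkeeping does not close. From $\|Y^\epsilon_t-Y^\epsilon_s\|_{V^*}\le (t-s)^{1/\alpha}R^{(\alpha-1)/\alpha}$ with $R=\int_0^T(F_u+C\|\widetilde X^\epsilon_u\|_V^\alpha)(1+\|\widetilde X^\epsilon_u\|_H^\beta)du$, a Kolmogorov-type tightness criterion needs $\mathbb{E}\|Y^\epsilon_t-Y^\epsilon_s\|^{\kappa}_{V^*}\le C|t-s|^{\kappa/\alpha}$ with $\kappa/\alpha>1$, i.e.\ $\kappa>\alpha$, not merely $\kappa>1$ as you state. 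Once you impose $\kappa>\alpha$, you must control $\mathbb{E}\bigl[R^{(\alpha-1)\kappa/\alpha}\bigr]$, and your H\"older split with exponents $(\Upsilon/(\Upsilon-2),\Upsilon/2)$ then forces a moment $\mathbb{E}\bigl(\int_0^T\|\widetilde X^\epsilon_u\|_V^\alpha du\bigr)^{(\alpha-1)\kappa\Upsilon/(2\alpha)}$, which exceeds the $\Upsilon/2$ moment supplied by Lemma \ref{lemma 02} unless $(\alpha-1)\kappa/\alpha\le 1$, i.e.\ $\kappa\le\alpha/(\alpha-1)$ --- incompatible with $\kappa>\alpha$ whenever $\alpha\ge 2$ (e.g.\ Navier--Stokes). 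The paper resolves this by taking $\kappa=p=\alpha+\eta_0$ and splitting $\mathbb{E}[(A'B')^{(\alpha-1)p/\alpha}]\le \mathbb{E}[(A')^{2(\alpha-1)p/\alpha}]+\mathbb{E}[(B')^{2(\alpha-1)p/\alpha}]$ (Young rather than your H\"older pairing); the resulting exponents $2\beta(\alpha-1)(\alpha+\eta_0)/\alpha$ and $4(\alpha-1)(\alpha+\eta_0)/\alpha$ are exactly the first two terms in the $\vee$ defining $\Upsilon$, so Lemma \ref{lemma 4.1} (with $p=\Upsilon$) and Lemma \ref{lemma 02} apply. Your attribution of (c) to the condition $\Upsilon\ge\beta+2$ is therefore misplaced: that term of $\Upsilon$ serves the well-posedness estimates, while the $\eta_0$-dependent terms are the ones engineered for (c).

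A second, smaller omission in (c): a H\"older modulus in $V^*$ together with $Y^\epsilon_0=0$ only bounds $Y^\epsilon_t$ in $V^*$, and bounded sets of $V^*$ are not relatively compact, so Ascoli--Arzel\`a does not yet apply. You need a uniform bound in a compactly embedded space, namely $\mathbb{E}\sup_{t}\|Y^\epsilon_t\|_H^2\le C$; since $\mathcal{A}$ takes values only in $V^*$, this cannot be read off the formula for $Y^\epsilon$ and must be extracted from the decomposition $Y^\epsilon=\widetilde X^\epsilon-x-Z^\epsilon-M^\epsilon$ together with the $H$-bounds on the other three terms, as the paper does before forming the compact sets $K_{R,\varpi}$.
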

\begin{proof} (a).
By Lemma \ref{Lemma-Condition-0,H-1,H}, we have
\begin{eqnarray}\label{eq tihgt 01}
&&    \mathbb{E}\Big(\sup_{t\in[0,T]}\Big\|M^\epsilon_t\Big\|^2_H\Big)\nonumber\\
&\leq&
 C\epsilon \mathbb{E}\Big(\int_0^T\int_{\mathbb{X}}\| f(s,\widetilde{X}^\epsilon_s,z)\|^{2}_H
            \Big)\varphi_\epsilon(s,z)\nu(dz)ds\Big)\nonumber\\
&\leq&
 C\epsilon \mathbb{E}\Big(\int_0^T\int_{\mathbb{X}}L_f^2(s,z)\Big(1+\|\widetilde{X}^\epsilon_{s}\|_H\Big)^2
          \varphi_\epsilon(s,z)\nu(dz)ds\Big)\nonumber\\
&\leq&
C\epsilon \mathbb{E}\Big(1+\sup_{s\in[0,T]}\|\widetilde{X}^\epsilon_s\|_H\Big)^{2}
   \Big(\sup_{\varphi\in S^N}\int_0^T\int_{\mathbb{X}}L_{f}^{2}(s,z)\varphi(s,z)\nu(dz)ds\Big)\nonumber\\
&\leq&
    C\epsilon \mathbb{E}\Big(1+\sup_{s\in[0,T]}\|\widetilde{X}^\epsilon_s\|_H\Big)^{2}C_{L_f,2,2,N}\nonumber\\
&\rightarrow &0,\ \ \text{as}\ \epsilon\rightarrow 0.
\end{eqnarray}

(b).
It is sufficient to prove that for any $\delta>0$, there exists a compact subset $K_\delta\subset C([0,T],V^*)$ such that
$$
\mathbb{P}(Z^\epsilon\in K_\delta)>1-\delta.
$$

Denote
$$
\mathcal{D}_{M,N}=
   \Big\{
    (r_t,g_t):\ r_{\cdot}\in D([0,T],H)\cap L^\alpha([0,T],V),\ \sup_{t\in[0,T]}\|r_t\|_H\leq M;\ \ g\in S^N
   \Big\},
$$
$$
\mathcal{R}(\mathcal{D}_{M,N})=
     \Big\{
       y_{\cdot}=\int_0^{\cdot}\int_{\mathbb{X}}f(s,r_s,z)(g(s,z)-1)\nu(dz)ds,\ \ (r,g)\in \mathcal{D}_{M,N}
     \Big\}.
$$

For any $y\in \mathcal{R}(\mathcal{D}_{M,N})$, we have
\begin{eqnarray}\label{eq tight step 2 01}
  \|y_t-y_s\|_{H}
&\leq&
  \int_s^{t}\int_{\mathbb{X}}\|f(l,r(l),z)\|_H|g(l,z)-1|\nu(dz)dl\nonumber\\
&\leq&
   \sup_{l\in[s,t]}(1+\|r(l)\|_H)\int_s^{t}\int_{\mathbb{X}}L_f(l,z)|g(l,z)-1|\nu(dz)dl\nonumber\\
&\leq&
   (M+1)\sup_{\varphi\in S^N}\int_s^{t}\int_{\mathbb{X}}L_f(l,z)|\varphi(l,z)-1|\nu(dz)dl.
\end{eqnarray}
%By (3.5) in \cite{Budhiraja-Chen-Dupuis} or YZZ(SD) (3.19), we have
%for every $\eta>0$, there exists $\kappa>0$ such that for any $A\subset[0,T]$ satisfying $\lambda(A)<\kappa$
%$$
%\sup_{\varphi\in S^N}\int_{A}\int_{\mathbb{X}}\|L_f(l,z)\|_H|\varphi(l,z)-1|\nu(dz)dl\leq \eta.
%$$
Applying Lemma \ref{Lemma-Condition-0,H-1,H}, c) in Lemma \ref{lem-thm2-02} and (\ref{eq tight step 2 01}), we obtain the following:

(1) for any $\eta>0$, there exists $\varpi>0$ (independent on $y$) such that for any $s,t\in[0,T]$ and $|t-s|\leq\varpi$
$$
  \|y_t-y_s\|_{H}\leq \eta,\ \ \forall y\in \mathcal{R}(\mathcal{D}_{M,N}),
$$

(2)
\begin{eqnarray*}
\sup_{y\in \mathcal{R}(\mathcal{D}_{M,N})}\sup_{t\in[0,T]}\|y_t\|_{H}
=\sup_{y\in \mathcal{R}(\mathcal{D}_{M,N})}\sup_{t\in[0,T]}\|y_t-y_0\|_{H}
\leq (M+1)C_{L_f,N}.
\end{eqnarray*}

Since $V\hookrightarrow H$ is compact, we also have $H\hookrightarrow V^*$ compactly. By Ascoli-Arzel\'a's theorem, the complement of
$\mathcal{R}(\mathcal{D}_{M,N})$ in $C([0,T],V^*)$, denoted by $\overline{\mathcal{R}}(\mathcal{D}_{M,N})$, is a compact subset in $C([0,T],V^*)$.

On the other hand,
\begin{eqnarray*}
 \mathbb{P}(Z^\epsilon\in \overline{\mathcal{R}}(\mathcal{D}_{M,N}))
 &\geq&
 \mathbb{P}(\sup_{t\in[0,T]}\|\widetilde{X}^\epsilon_t\|_H\leq M)\\
 &=&
 1-\mathbb{P}(\sup_{t\in[0,T]}\|\widetilde{X}^\epsilon_t\|_H> M)\\
 &\geq&
 1-\mathbb{E}(\sup_{t\in[0,T]}\|\widetilde{X}^\epsilon_t\|^2_H)/M^2\\
 &\geq&
 1-C_{2}/M^2,
\end{eqnarray*}
we have applied Lemma \ref{lemma 4.1} in the last inequality and this establishes that $\{Z^\epsilon\}$ is tight in $C([0,T],V^*)$.

\vskip 0.3cm
(c). By Lemmas \ref{lemma 4.1} and \ref{lemma 02}, recall $\eta_0$ in (H5), let $p=\alpha+\eta_0$,
we have
\begin{eqnarray*}
 \mathbb{E}\|Y^\epsilon_t-Y^\epsilon_s\|^p_{V^*}
 &\leq&
 \mathbb{E}\Big|\int_s^t\|\mathcal{A}(l,\widetilde{X}^\epsilon_l)\|_{V^*}dl\Big|^p\nonumber\\
 &\leq&
 |t-s|^{p/\alpha}\mathbb{E}\Big(\int_s^t\|\mathcal{A}(l,\widetilde{X}^\epsilon_l)\|^{\frac{\alpha}{\alpha-1}}_{V^*}dl\Big)^{\frac{(\alpha-1)p}{\alpha}}\nonumber\\
 &\leq&
 |t-s|^{p/\alpha}\mathbb{E}\Big(\int_s^t(F_l+C\|\widetilde{X}^\epsilon_l\|^\alpha_V)(1+\|\widetilde{X}^\epsilon_l\|_H^\beta) dl\Big)^{\frac{(\alpha-1)p}{\alpha}}\nonumber\\
 &\leq&
 |t-s|^{p/\alpha}\Big[\mathbb{E}\Big(\sup_{l\in[0,T]}(1+\|\widetilde{X}^\epsilon_l\|_H^\beta)^{\frac{2(\alpha-1)p}{\alpha}}\Big)\\
&&   \ \ \ \ \ \ \ \ \ \ \ \ \ \       +
         \mathbb{E}\Big(\int_s^tF_l+C\|\widetilde{X}^\epsilon_l\|^\alpha_Vdl\Big)^{\frac{2(\alpha-1)p}{\alpha}}
        \Big]\nonumber\\
 &\leq&
 C_{\alpha,p,F}|t-s|^{p/\alpha}.
\end{eqnarray*}
%We need to divide into two cases to obtain the last inequality. First, if $\frac{2(\alpha-1)p}{\alpha}\geq2$, then it needs to apply Lemma %\ref{lemma 02} directly. Second, if $\frac{2(\alpha-1)p}{\alpha}<2$, then we need to apply the fact $\mathbb{E}|Y|\leq %\Big(\mathbb{E}(|Y|^q)\Big)^{1/q},\ \forall q>1$ first and then use Lemma \ref{lemma 02}.

Hence, a direct application of Kolmogorov's criterion, for every $\varpi\in(0,\frac{1}{\alpha}-\frac{1}{p})$, there exists constant $C_\varpi$ independent on $\epsilon$ such that
\begin{eqnarray}\label{eq tight step 3 01}
 \mathbb{E}\Big(\sup_{t\neq s\in[0,T]}\frac{\|Y^\epsilon_t-Y^\epsilon_s\|^p_{V^*}}{|t-s|^{p\varpi}}\Big)\leq C_\varpi.
\end{eqnarray}

On the other hand, by (\ref{eq SPDE 02}), we have
\begin{eqnarray*}
\widetilde{X}^\epsilon_t=x+Y^\epsilon_t+Z^\epsilon_t
      +M^\epsilon_t.
\end{eqnarray*}
Then
\begin{eqnarray}\label{eq step 3 01}
 &&\mathbb{E}\Big(\sup_{t\in[0,T]}\|Y^\epsilon_t\|^2_H\Big)\\
 &\leq&
 C\Big[ \|x\|^2_H+\mathbb{E}\Big(\sup_{t\in[0,T]}\|\widetilde{X}^\epsilon_t\|^2_H\Big)
        +
        \mathbb{E}\Big(\sup_{t\in[0,T]}\|Z^\epsilon_t\|^2_H\Big)
        +
        \mathbb{E}\Big(\sup_{t\in[0,T]}\|M^\epsilon_t\|^2_H\Big)
  \Big].\nonumber
\end{eqnarray}
Notice that
\begin{eqnarray}\label{eq step 3 02}
  &&\mathbb{E}\Big(\sup_{t\in[0,T]}\|Z^\epsilon_t\|^2_H\Big)\nonumber\\
  &\leq&
  \mathbb{E}\Big(\int_0^T\int_{\mathbb{X}}\|f(s,\widetilde{X}^\epsilon_s,z)\|_H|\varphi_\epsilon(s,z)-1|\nu(dz)ds\Big)^2\nonumber\\
  &\leq&
  C\mathbb{E}\Big(1+\sup_{t\in[0,T]}\|\widetilde{X}^\epsilon_t\|_H\Big)^2
  \Big(\sup_{\varphi\in S^N}\int_0^T\int_{\mathbb{X}}L_f(s,z)|\varphi(s,z)-1|\nu(dz)ds\Big)^2\nonumber\\
  &\leq&
  CC^2_{L_f,N}\mathbb{E}\Big(1+\sup_{t\in[0,T]}\|\widetilde{X}^\epsilon_t\|_H\Big)^2
\end{eqnarray}
By Lemma \ref{lemma 4.1}, (\ref{eq step 3 01}), (\ref{eq step 3 02}) and  (\ref{eq tihgt 01}), we have
\begin{eqnarray}\label{eq step 3}
\mathbb{E}\Big(\sup_{t\in[0,T]}\|Y^\epsilon_t\|^2_H\Big)\leq C<\infty,
\end{eqnarray}
where $C$ is independent of $\epsilon$.

For $\varpi\in(0,1)$ and $R>0$. Set
$$
K_{R,\varpi}:=\Big\{j\in C([0,T],V^*):\ \sup_{t\in[0,T]}\|j_t\|_H+\sup_{s\neq t\in[0,T]}\frac{\|j_t-j_s\|_{V^*}}{|t-s|^\varpi}\leq R\Big\}.
$$
Since $V\hookrightarrow H$ is compact, we also have $H\hookrightarrow V^*$ compactly. By Ascoli-Arzel\'a's theorem,
$K_{R,\varpi}$ is a compact subset of $C([0,T],V^*)$. By (\ref{eq tight step 3 01}), (\ref{eq step 3}) and Chebyschev's inequality,
 for some $\varpi\in(0,1)$ and any $R>0$, we have
$$
\mathbb{P}\Big(Y^\epsilon\not\in K_{R,\varpi}\Big)\geq\frac{C_{T,\varpi}}{R}.
$$
This implies the tightness of $\{Y^\epsilon\}$  in $C([0,T],V^*)$.

The tightness of $\{\widetilde{X}^\epsilon\}$ in $D([0,T],V^*)$ then follows from (\ref{eq SPDE 02}) and the conclusions proved above.
\end{proof}

\section{Convergency of the processes}
\setcounter{equation}{0}
\renewcommand{\theequation}{\thesection.\arabic{equation}}

With the tightness result obtained in the last section, we now characterize the limit points and derive limiting results for the processes.

Throughout this section, we assume that for almost all $\omega$, as $\epsilon\to 0$, $\varphi_\epsilon(\cdot,\cdot)(\omega)$ converges to $\varphi(\cdot,\cdot)(\omega)$ in
$S^N$ weakly, and $X^\epsilon(\omega)$ converges to $X(\omega)$ in $D([0,T],V^*)$ strongly  with supremum norm.

Set
$$
\mathcal{K}=L^\alpha([0,T]\times\Omega\rightarrow V;dt\times \bar{\mathbb{P}}),
$$
$$
\mathcal{K}^*=L^\frac{\alpha}{\alpha-1}([0,T]\times\Omega\rightarrow V^*;dt\times \bar{\mathbb{P}}).
$$

\begin{lem}\label{lem0517a1}
There exists a subsequence $(\epsilon_k)$, $\bar{X}\in \mathcal{K}\cap L^\infty([0,T],L^{\beta+2}(\Omega,H))$ and
$Y\in \mathcal{K}^*$ such that

(i) $X^{\epsilon_k}\rightarrow \bar{X}$  in $\mathcal{K}$ weakly and  in $L^\infty([0,T],L^{\beta+2}(\Omega,H))$ in weak-star topology,

(ii) $\mathcal{A}(\cdot,X^{\epsilon_k})\rightarrow Y$  in $\mathcal{K}^*$ weakly,

(iii) \begin{eqnarray*}
\lim_{\epsilon\rightarrow0}\mathbb{E}\Big(\sup_{t\in[0,T]}\|X^\epsilon_t-X_t\|_{V^*}\Big)=0,
\end{eqnarray*}
and for $m=\frac{\alpha}{\alpha+1}$,
\begin{eqnarray*}
\lim_{\epsilon\rightarrow0}\mathbb{E}\int_0^T\|X^\epsilon_t-X_t\|^{2m}_Hdt=0.
\end{eqnarray*}
\end{lem}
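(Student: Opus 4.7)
\medskip

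\noindent\textbf{Proof proposal.} The plan is to use the uniform moment estimates of Lemmas \ref{lemma 4.1} and \ref{lemma 02} to get weak/weak-$*$ compactness for parts (i) and (ii), and then to upgrade the almost-sure convergence in $D([0,T],V^*)$ to the mean convergences in (iii) by combining uniform integrability with the Gelfand-triple interpolation $\|u\|_H^2\le \|u\|_{V^*}\|u\|_V$.

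For (i), the bound $\sup_\epsilon\mathbb{E}\int_0^T\|X^\epsilon_t\|_V^\alpha\,dt\le C$ (take $p=2$ in Lemma \ref{lemma 4.1}) says $\{X^\epsilon\}$ is norm-bounded in the reflexive space $\mathcal{K}=L^\alpha([0,T]\times\Omega;V)$, so by Banach--Alaoglu a subsequence converges weakly to some $\bar X\in\mathcal{K}$. The bound $\sup_\epsilon\sup_{t\in[0,T]}\mathbb{E}\|X^\epsilon_t\|_H^{\beta+2}\le C$ (Lemma \ref{lemma 4.1} with $p=\beta+2$, legitimate since $\beta+2\le\Upsilon$) says the same sequence is bounded in the dual $L^\infty([0,T];L^{\beta+2}(\Omega;H))=\bigl(L^1([0,T];L^{(\beta+2)/(\beta+1)}(\Omega;H))\bigr)^*$, so a further subsequence converges weakly-$*$; a standard argument identifying limits on test functions of the form $\mathbf 1_{[0,t]}\otimes\phi\otimes\xi$ shows that the weak-$*$ limit agrees with $\bar X$. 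For (ii), use (H4) and Hölder to estimate
\begin{equation*}
\mathbb{E}\int_0^T\|\mathcal{A}(t,X^\epsilon_t)\|_{V^*}^{\alpha/(\alpha-1)}\,dt
\le \mathbb{E}\int_0^T (F_t+C\|X^\epsilon_t\|_V^\alpha)(1+\|X^\epsilon_t\|_H^\beta)\,dt,
\end{equation*}
and bound the mixed term $\mathbb{E}\int_0^T\|X^\epsilon_t\|_V^\alpha\|X^\epsilon_t\|_H^\beta\,dt$ exactly by Lemma \ref{lemma 4.1} with $p=\beta+2$; reflexivity of $\mathcal{K}^*$ then yields, along a further subsequence, weak convergence of $\mathcal{A}(\cdot,X^\epsilon)$ to some $Y\in\mathcal{K}^*$.

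For the first half of (iii), the hypothesis is that $\sup_{t\in[0,T]}\|X^\epsilon_t-X_t\|_{V^*}\to 0$ almost surely. The continuous embedding $H\hookrightarrow V^*$ together with Lemma \ref{lemma 4.1} gives $\sup_\epsilon\mathbb{E}\bigl(\sup_{t\in[0,T]}\|X^\epsilon_t\|_{V^*}^2\bigr)<\infty$, so the family $\{\sup_{t}\|X^\epsilon_t-X_t\|_{V^*}\}$ is uniformly integrable; Vitali's convergence theorem then delivers the mean convergence.

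The second half of (iii) is the only step that requires a genuine trick: from the Gelfand triple relation $\|u\|_H^2\le\|u\|_{V^*}\|u\|_V$ we get, with $m=\alpha/(\alpha+1)$ so that $2m=2\alpha/(\alpha+1)$,
\begin{equation*}
\int_0^T\|X^\epsilon_t-X_t\|_H^{2m}\,dt
\;\le\;\int_0^T\|X^\epsilon_t-X_t\|_{V^*}^{m}\,\|X^\epsilon_t-X_t\|_V^{m}\,dt.
\end{equation*}
Applying Hölder in $t$ with conjugate exponents $(\alpha+1)/\alpha$ and $\alpha+1$ (so that the $V$-factor is raised to $mq=\alpha$ and the $V^*$-factor to $mp=1$), and then Hölder in $\omega$, gives
\begin{equation*}
\mathbb{E}\int_0^T\|X^\epsilon_t-X_t\|_H^{2m}\,dt
\;\le\;\Bigl(\mathbb{E}\int_0^T\|X^\epsilon_t-X_t\|_{V^*}\,dt\Bigr)^{\!\alpha/(\alpha+1)}
\Bigl(\mathbb{E}\int_0^T\|X^\epsilon_t-X_t\|_V^\alpha\,dt\Bigr)^{\!1/(\alpha+1)}.
\end{equation*}
The second factor is uniformly bounded by Lemma \ref{lemma 4.1} applied both to $X^\epsilon$ and to $X$ (the latter a consequence of lower semicontinuity under the weak limit $\bar X$ and identification $X=\bar X$, which is established as part of (i) via the $V^*$-convergence), while the first factor tends to $0$ by the already proved first half of (iii). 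The main obstacle I anticipate is the careful juggling of exponents in this interpolation and the bookkeeping needed to confirm that the $V$-moments of the limit $X$ are controlled, but both are routine given Lemmas \ref{lemma 4.1} and \ref{lemma 02}.
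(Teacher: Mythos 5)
Your proposal is correct and follows essentially the same route as the paper: weak/weak-$*$ compactness from the uniform moment bounds of Lemma \ref{lemma 4.1} for (i)--(ii), uniform integrability (the paper does the explicit $\delta$-splitting with Cauchy--Schwarz, which is just Vitali spelled out) for the first half of (iii), and the interpolation $\|u\|_H^{2}\le\|u\|_{V^*}\|u\|_V$ with the same H\"older exponents $\frac{\alpha}{\alpha+1},\frac{1}{\alpha+1}$ for the second half. The only presentational difference is that you justify $\mathbb{E}\int_0^T\|X_t\|_V^\alpha dt<\infty$ via weak lower semicontinuity in $\mathcal{K}$ rather than the paper's Fatou argument; both are standard and equivalent here.
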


\begin{proof}
(i) following from Lemma \ref{lemma 4.1}. For (ii),
by Lemma \ref{lemma 4.1} again,
\begin{eqnarray}
\|\mathcal{A}(\cdot,X^\epsilon(\cdot))\|_{\mathcal{K}^*}^{\frac{\alpha-1}{\alpha}}
&=&
\mathbb{E}\Big(\int_0^T\|\mathcal{A}(t,X^\epsilon_t)\|_{V^*}^{\frac{\alpha}{\alpha-1}}dt\Big)\nonumber\\
&\leq&
\mathbb{E}\Big(\int_0^T(F_t+C\|X^\epsilon_t\|_V^\alpha)(1+\|X^\epsilon_t\|^\beta_H)dt\Big)\nonumber\\
&\leq&
C<\infty.
\end{eqnarray}
%The conclusion then follows from Lemma \ref{lemma 4.1}.

Lemma \ref{lemma 4.1} implies
\begin{eqnarray}\label{eq lemma3 01}
 \mathbb{E}\Big(\sup_{t\in[0,T]}\|X^\epsilon_t\|^2_H\Big)\leq C_{2,N,x},
\end{eqnarray}
and
\begin{eqnarray}\label{eq lemma3 02}
 \mathbb{E}\Big(\int_0^T\|X^\epsilon_t\|^{\alpha}_Vdt\Big)\leq C.
\end{eqnarray}
Hence, by the strong convergence of $X^\epsilon(\omega)$ to $X(\omega)$ in $D([0,T],V^*)$ with sup norm, Fatou's lemma, (\ref{eq lemma3 01}) and (\ref{eq lemma3 02}), we have
\begin{eqnarray}\label{eq lemma3 03}
 \mathbb{E}\Big(\sup_{t\in[0,T]}\|X_t\|^2_H\Big)\leq\liminf_{\epsilon\rightarrow0}\mathbb{E}\Big(\sup_{t\in[0,T]}\|X^\epsilon_t\|^2_H\Big)\leq C_{2,N,x},
\end{eqnarray}
\begin{eqnarray}\label{eq lemma3 04}
 \mathbb{E}\Big(\int_0^T\|X_t\|^{\alpha}_Vdt\Big)\leq\liminf_{\epsilon\rightarrow0}\mathbb{E}\Big(\int_0^T\|X^\epsilon_t\|^{\alpha}_Vdt\Big)\leq C.
\end{eqnarray}
and
\begin{eqnarray}\label{eq lemma3 05}
\lim_{\epsilon\rightarrow0}\mathbb{E}\Big(\sup_{t\in[0,T]}\|X^\epsilon_t-X_t\|_{V^*}\Big)=0.
\end{eqnarray}
(\ref{eq lemma3 05}) can be seen as following. Set
$$
\Omega^\epsilon_\delta=\{\omega: \sup_{t\in[0,T]}\|X^\epsilon_t-X_t\|_{V^*}\geq\delta\}.
$$
The strong convergence of $X^\epsilon(\omega)$ to $X(\omega)$ in $D([0,T],V^*)$ with sup norm implies
\begin{eqnarray}\label{eq lemma3 05 01}
\lim_{\epsilon\rightarrow 0}\mathbb{P}(\Omega^\epsilon_\delta)=0,\ \ \forall \delta>0.
\end{eqnarray}
Applying (\ref{eq lemma3 05 01}), (\ref{eq lemma3 01}) and (\ref{eq lemma3 03}) to (\ref{eq lemma3 05}), we have
\begin{eqnarray*}
&&   \lim_{\epsilon\rightarrow0}\mathbb{E}\Big(\sup_{t\in[0,T]}\|X^\epsilon_t-X_t\|_{V^*}\Big)\\
&=&
     \lim_{\epsilon\rightarrow0}
          \Big[
             \mathbb{E}\Big(\sup_{t\in[0,T]}\|X^\epsilon_t-X_t\|_{V^*}\cdot 1_{\Omega^\epsilon_\delta}\Big)
                 +
             \mathbb{E}\Big(\sup_{t\in[0,T]}\|X^\epsilon_t-X_t\|_{V^*}\cdot 1_{(\Omega^\epsilon_\delta)^c}\Big)
          \Big]\\
&\leq&
    \delta
    +
    \lim_{\epsilon\rightarrow0} \Big(\mathbb{E}\Big(\sup_{t\in[0,T]}\|X^\epsilon_t-X_t\|^2_{V^*}\Big)\Big)^{1/2}
                              \cdot
                                \Big(\mathbb{P}(\Omega^\epsilon_\delta)\Big)^{1/2}\\
&\leq&
\delta.
\end{eqnarray*}
The arbitrary of $\delta$ implies (\ref{eq lemma3 05}).

Taking $m=\frac{\alpha}{\alpha+1}$, we get
\begin{eqnarray*}
\mathbb{E}\int_0^T\|X^\epsilon_t-X_t\|^{2m}_Hdt
&=&
\mathbb{E}\int_0^T\langle X^\epsilon_t-X_t,X^\epsilon_t-X_t\rangle_{V^*,V}^{m}dt\\
&\leq&
\mathbb{E}\int_0^T\| X^\epsilon_t-X_t\|^m_{V^*}\|X^\epsilon_t-X_t\|_V^{m}dt\\
&\leq&
\Big(\mathbb{E}\int_0^T\| X^\epsilon_t-X_t\|_{V^*}dt\Big)^{\frac{\alpha-m}{\alpha}}
\Big(\mathbb{E}\int_0^T\|X^\epsilon_t-X_t\|^{\alpha}_Vdt\Big)^{\frac{m}{\alpha}}.
\end{eqnarray*}
Combining (\ref{eq lemma3 02}), (\ref{eq lemma3 04}) and (\ref{eq lemma3 05}), we have
\begin{eqnarray}\label{eq lemma3 06}
\lim_{\epsilon\rightarrow0}\mathbb{E}\int_0^T\|X^\epsilon_t-X_t\|^{2m}_Hdt=0.
\end{eqnarray}

\end{proof}

\begin{lem}\label{lem0517a2}
For any $h\in H$, we have
\begin{eqnarray}\label{eq star1}
&&\lim_{\epsilon_k\rightarrow0}\langle \int_0^t\int_{\mathbb{X}}f(s,X^{\epsilon_k}_s,z)
(\varphi_{\epsilon_k}(s,z)-1)\nu(dz)ds,h\rangle_{H,H}\nonumber\\
&=&
\langle \int_0^t\int_{\mathbb{X}}f(s,X_s,z)(\varphi(s,z)-1)\nu(dz)ds,h\rangle_{H,H}.
\end{eqnarray}
\end{lem}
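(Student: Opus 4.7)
The plan is to split the difference between the two double integrals as $A_k+B_k$, where
\begin{align*}
A_k&:=\int_0^t\!\!\int_{\mathbb{X}}[f(s,X^{\epsilon_k}_s,z)-f(s,X_s,z)](\varphi_{\epsilon_k}(s,z)-1)\,\nu(dz)\,ds,\\
B_k&:=\int_0^t\!\!\int_{\mathbb{X}}f(s,X_s,z)(\varphi_{\epsilon_k}(s,z)-\varphi(s,z))\,\nu(dz)\,ds,
\end{align*}
and show $\langle A_k,h\rangle_H\to 0$ and $\langle B_k,h\rangle_H\to 0$ by different devices: $B_k$ via the weak convergence $\varphi_{\epsilon_k}\to\varphi$ in $S^N$, and $A_k$ via the Lipschitz estimate (H6).

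For $B_k$, I would apply Lemma \ref{lem-thm2-02}(a) to the scalar function $\psi(s,z):=\langle f(s,X_s(\omega),z),h\rangle_H$. By (H5), $|\psi(s,z)|\leq\|h\|_H L_f(s,z)(1+\|X_s(\omega)\|_H)$, and combining Lemma \ref{lemma 4.1} (with $p=2$) with Fatou's lemma gives $\sup_{s\leq T}\|X_s(\omega)\|_H<\infty$ almost surely, so $|\psi|\leq C(\omega)L_f$. Since Remark \ref{Remark 2} and (H5) imply $L_f\in\mathcal{H}_p\cap L_2(\nu_T)\subset\mathcal{H}_2\cap L_2(\nu_T)$, the hypotheses of Lemma \ref{lem-thm2-02}(a) are satisfied and $\langle B_k,h\rangle_H\to 0$.

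For $A_k$, (H6) yields
\[|\langle A_k,h\rangle_H|\leq\|h\|_H\int_0^T\|X^{\epsilon_k}_s-X_s\|_H\,\psi_k(s)\,ds,\qquad \psi_k(s):=\int_{\mathbb{X}}G_f(s,z)|\varphi_{\epsilon_k}(s,z)-1|\nu(dz).\]
Since $G_f\in\mathcal{H}_2\cap L_2(\nu_T)$, Lemma \ref{Lemma-Condition-0,H-1,H} bounds $\sup_k\int_0^T\psi_k\,ds\leq C$ and Lemma \ref{lem-thm2-02}(c) gives equicontinuity: for every $\eta>0$ there exists $\delta>0$ with $\sup_k\int_A\psi_k\,ds\leq\eta$ whenever $\lambda_T(A)\leq\delta$. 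After a further subsequence extraction, Lemma \ref{lem0517a1}(iii) gives $\int_0^T\|X^{\epsilon_k}_s-X_s\|_H^{2m}\,ds\to 0$ almost surely, so the ``bad-time'' set $E_k:=\{s\in[0,T]:\|X^{\epsilon_k}_s-X_s\|_H>\eta\}$ has $\lambda_T(E_k)\leq\delta$ for all $k$ large. Splitting the integral at $E_k$ gives
\[\int_0^T\|X^{\epsilon_k}_s-X_s\|_H\,\psi_k\,ds\leq\eta\,C+M(\omega)\int_{E_k}\psi_k\,ds\leq \eta C+M(\omega)\eta,\]
where $M(\omega):=\sup_k\sup_s(\|X^{\epsilon_k}_s\|_H+\|X_s\|_H)$. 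Letting $\eta\to 0$ concludes the argument.

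The main obstacle will be securing the pointwise bound $M(\omega)<\infty$ almost surely: Lemma \ref{lemma 4.1} only provides $L^p(\Omega)$-uniform control on $\sup_s\|X^{\epsilon_k}_s\|_H$, not an $\omega$-wise uniform bound across $k$. A Chebyshev--Borel--Cantelli thinning along the subsequence (choosing $k_j$ so that $\mathbb{P}(\sup_s\|X^{\epsilon_{k_j}}_s\|_H>j)\leq j^{-2}$, then using the $L^{\Upsilon}$-bound) makes $\sup_j\sup_s\|X^{\epsilon_{k_j}}_s\|_H$ finite almost surely. Since the statement is already subsequential, this additional thinning is harmless.
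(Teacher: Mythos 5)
Your decomposition into $A_k$ and $B_k$ and your treatment of $B_k$ coincide with the paper's proof: the paper likewise applies Lemma \ref{lem-thm2-02}(a) pathwise to $\zeta(s,z)=\langle f(s,X_s,z),h\rangle_{H,H}$, using $\sup_{s\in[0,T]}\|X_s\|_H<\infty$ a.s.\ and $L_f\in\mathcal{H}_2\cap L_2(\nu_T)$. Your treatment of $A_k$ also begins exactly as in the paper: the Lipschitz bound (H6), the bad-time set $A_{\delta,\epsilon_k}=\{s:\|X^{\epsilon_k}_s-X_s\|_H>\delta\}$ whose Lebesgue measure tends to $0$ a.s.\ along a subsequence via Lemma \ref{lem0517a1}(iii), and the uniform control of $\int G_f|\varphi-1|$ from Lemmas \ref{Lemma-Condition-0,H-1,H} and \ref{lem-thm2-02}(c).

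The gap is in how you close the bad-set term. Your pathwise estimate needs $M(\omega):=\sup_k\sup_s(\|X^{\epsilon_k}_s\|_H+\|X_s\|_H)<\infty$ a.s., and the proposed Chebyshev--Borel--Cantelli thinning does not deliver it: the uniform moment bound gives $\mathbb{P}(\sup_s\|X^{\epsilon_{k_j}}_s\|_H>j)\leq Cj^{-\Upsilon}$ for \emph{every} $j$ and $k_j$, and Borel--Cantelli then yields only that a.s.\ $\sup_s\|X^{\epsilon_{k_j}}_s\|_H\leq j$ for all large $j$ --- a bound growing with $j$ --- so $\sup_j\sup_s\|X^{\epsilon_{k_j}}_s\|_H$ may still be infinite. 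Uniform $L^p(\Omega)$ bounds never force an $\omega$-wise uniform bound over an infinite family (consider $Y_k=\sqrt{k}\,1_{A_k}$ with independent $A_k$, $\mathbb{P}(A_k)=1/k$: then $\mathbb{E}Y_k^2=1$ but $\sup_kY_k=\infty$ a.s.). The paper sidesteps this by estimating the bad-set contribution in expectation rather than pathwise: by Cauchy--Schwarz,
\begin{eqnarray*}
\mathbb{E}\Big[\sup_s\|X^{\epsilon_k}_s-X_s\|_H\cdot \Theta_k\Big]
\leq
\Big(\mathbb{E}\sup_s\|X^{\epsilon_k}_s-X_s\|^2_H\Big)^{1/2}\Big(\mathbb{E}\,\Theta_k^2\Big)^{1/2},
\qquad
\Theta_k:=\sup_{\varphi\in S^N}\int_{A_{\delta,\epsilon_k}}\int_{\mathbb{X}}G_f(s,z)|\varphi(s,z)-1|\nu(dz)ds,
\end{eqnarray*}
where the first factor is bounded uniformly in $k$ by Lemma \ref{lemma 4.1} (plus Fatou for $X$), and the second tends to $0$ by dominated convergence since $0\leq\Theta_k\leq C_{G_f,N}$ and $\Theta_k\to0$ a.s.\ by Lemma \ref{lem-thm2-02}(c). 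This yields $\mathbb{E}|\langle A_k,h\rangle_H|\to0$, and the a.s.\ conclusion follows along a further subsequence. Your pathwise route could be repaired by instead proving convergence in probability of the product (split on $\{\sup_s\|X^{\epsilon_k}_s-X_s\|_H>R\}$, let $k\to\infty$ and then $R\to\infty$), but the claim that the thinning makes $M(\omega)$ finite is false as stated, so the argument as written does not close.
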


\begin{proof}
Denote $\zeta(s,z)=\langle f(s,X_s,z),h\rangle_{H,H}$. Since $\sup_{s\in[0,T]}\|X_s\|_H<\infty,\;\; \mathbb{P}$-a.s., and $L_f\in\mathcal{H}_2$, it follows from Remark \ref{Remark 2} and Lemma \ref{lem-thm2-02} that
\begin{eqnarray}\label{eq star2}
&& \lim_{\epsilon_k\rightarrow0}\langle \int_0^t\int_{\mathbb{X}}f(s,X_s,z)(\varphi_{\epsilon_k}(s,z)-1)
\nu(dz)ds,h\rangle_{H,H}\nonumber\\
&=&
\langle \int_0^t\int_{\mathbb{X}}f(s,X_s,z)(\varphi(s,z)-1)\nu(dz)ds,h\rangle_{H,H}.
\end{eqnarray}

For any $\delta>0$, denote $A_{\delta,\epsilon}(\omega):=\Big\{s\in[0,T]:\ \|X^\epsilon_s-X_s\|_H>\delta\Big\}$. By (\ref{eq lemma3 06})
\begin{eqnarray*}
\lim_{\epsilon\rightarrow 0}\mathbb{E}\Big(\lambda_T(A_{\delta,\epsilon})\Big)
\leq
\frac{1}{\delta^{2m}}\lim_{\epsilon\rightarrow0}\mathbb{E}\int_0^T
\|X^\epsilon_t-X_t\|^{2m}_Hdt=0.
\end{eqnarray*}
Therefore, there exists a subsequence $\epsilon_k$ (for simplicity, we still denote it by the same notation $\epsilon_k$) such that
\begin{eqnarray}\label{eq recall}
\lim_{\epsilon_k\rightarrow 0}\lambda_T(A_{\delta,\epsilon_k})=0,\ \ \ \bar{\mathbb{P}}\text{-a.s.}.
\end{eqnarray}

Applying Lemma \ref{Lemma-Condition-0,H-1,H}, we have
\begin{eqnarray}\label{eq P21 01}
&&\int_0^T\int_{\mathbb{X}}\|f(s,X^{\epsilon_k}_s,z)-f(s,X_s,z)\|_H|\varphi_{\epsilon_k}(s,z)-1|\nu(dz)ds\nonumber\\
&\leq&
\int_0^T\int_{\mathbb{X}}G_f(s,z)\|X^{\epsilon_k}_s-X_s\|_H|\varphi_{\epsilon_k}(s,z)-1|\nu(dz)ds\nonumber\\
&\leq&
  \delta\int_{A^c_{\delta,\epsilon_k}}\int_{\mathbb{X}}G_f(s,z)
  |\varphi_{\epsilon_k}(s,z)-1|\nu(dz)ds\nonumber\\
&&
  +
  \sup_{s\in[0,T]}\|X^{\epsilon_k}_s-X_s\|_H\int_{A_{\delta,\epsilon_k}}\int_{\mathbb{X}}G_f(s,z)|\varphi_{\epsilon_k}(s,z)-1|\nu(dz)ds\nonumber\\
&\leq&
 \delta\sup_{\varphi\in S^N}\int_0^T\int_{\mathbb{X}}G_f(s,z)|\varphi(s,z)-1|\nu(dz)ds\nonumber\\
&& +
 \sup_{s\in[0,T]}\|X^{\epsilon_k}_s-X_s\|_H\sup_{\varphi\in S^N}\int_{A_{\delta,\epsilon_k}}\int_{\mathbb{X}}G_f(s,z)|\varphi(s,z)-1|\nu(dz)ds\\
&\leq&
  \delta C_{G_f,N}
  +
  \sup_{s\in[0,T]}\|X^{\epsilon_k}_s-X_s\|_H\sup_{\varphi\in S^N}\int_{A_{\delta,\epsilon_k}}\int_{\mathbb{X}}G_f(s,z)|\varphi(s,z)-1|\nu(dz)ds.\nonumber
\end{eqnarray}
Notice that
\begin{eqnarray}\label{eq P21 02}
  &&\mathbb{E}\Big(\sup_{s\in[0,T]}\|X^{\epsilon_k}_s-X_s\|_H\sup_{\varphi\in S^N}\int_{A_{\delta,\epsilon_k}}\int_{\mathbb{X}}G_f(s,z)|\varphi(s,z)-1|\nu(dz)ds\Big)\nonumber\\
  &\leq&
  \Big(\mathbb{E}\Big(\sup_{s\in[0,T]}\|X^{\epsilon_k}_s-X_s\|^2_H\Big)^{\frac12}
  \Big(\mathbb{E}\Big(\sup_{\varphi\in S^N}\int_{A_{\delta,\epsilon_k}}\int_{\mathbb{X}}G_f(s,z)
  |\varphi(s,z)-1|\nu(dz)ds\Big)^2\Big)^{\frac12}.
\end{eqnarray}
By the dominated convergence theorem, Lemma \ref{lem-thm2-02} c) and Lemma \ref{Lemma-Condition-0,H-1,H}, we have
\begin{eqnarray}\label{eq A 00}
  \lim_{{\epsilon_k}\rightarrow 0}\mathbb{E}\Big(\sup_{\varphi\in S^N}\int_{A_{\delta,\epsilon_k}}\int_{\mathbb{X}}G_f(s,z)|\varphi(s,z)-1|\nu(dz)ds\Big)^2=0.
\end{eqnarray}

Hence, (\ref{eq lemma3 01}), (\ref{eq lemma3 03}), (\ref{eq P21 01})-(\ref{eq A 00}) imply
\begin{eqnarray}\label{eq P21 03}
  \lim_{{\epsilon_k}\rightarrow0}\mathbb{E}\Big(\int_0^T\int_{\mathbb{X}}\|f(s,X^{\epsilon_k}_s,z)-f(s,X_s,z)\|_H|\varphi_{\epsilon_k}(s,z)-1|\nu(dz)ds\Big)=0.
\end{eqnarray}
So, there exists a subsequence $\epsilon_k$ (for simplicity, we still denote it by the same notation $\epsilon_k$) such that
$$
\lim_{\epsilon_k\rightarrow0}\int_0^T\int_{\mathbb{X}}\|f(s,X^{\epsilon_k}_s,z)-f(s,X_s,z)\|_H|\varphi_{\epsilon_k}(s,z)-1|\nu(dz)ds=0,\ \ \bar{\mathbb{P}}\text{-}a.s..
$$
Combining this with (\ref{eq star2}), we arrive at (\ref{eq star1}).
\end{proof}

Define
\begin{eqnarray}\label{eq step 5 01}
\widetilde{X}_t:=x+\int_0^tY_sds+\int_0^t\int_{\mathbb{X}}f(s,X_s,z)(\varphi(s,z)-1)\nu(dz)ds.
\end{eqnarray}
By taking weak limit of (\ref{eq SPDE 02}), it is not difficulty to see that
$$
\widetilde{X}_t(\omega)=\bar{X}_t(\omega)=X_t(\omega),\text{\ for\ }dt\times \bar{\mathbb{P}}\text{-almost\ all\ }(t,\omega).
$$

Set
$$
 \mathcal{N}:=\Big\{\phi:\ \phi\ \text{is a } V\text{-valued}\ \bar{\mathcal{F}}_t\text{-adapted process such that }\mathbb{E}\Big(\int_0^T\rho(\phi_s)ds\Big)<\infty\Big\}.
$$
Fix $\phi\in \mathcal{K}\cap\mathcal{N}\cap L^\infty([0,T],L^{\beta+2}(\Omega,H))$
and $\psi\in L^\infty([0,T],\RR)$. Denote
\begin{eqnarray*}
  G(X,\varphi,Y)
&  :=&
           \mathbb{E}\Big[\int_0^T\psi_t
             \int_0^te^{-\int_0^s(K_l+\rho(\phi_l))dl}\\
&&\qquad\times
                     2\left<\int_{\mathbb{X}}f(s,X_s,z)(\varphi(s,z)-1)\nu(dz),
                     Y_s\right>_{H,H}ds dt
                                    \Big].
\end{eqnarray*}

The following limiting result will be needed later.
\begin{lem}\label{lem0517a3}
\begin{eqnarray}\label{eq star 3}
\lim_{{\epsilon_k}\rightarrow0}G(X^{\epsilon_k},\varphi_{\epsilon_k},X^{\epsilon_k})=G(X,\varphi,X).
\end{eqnarray}
\end{lem}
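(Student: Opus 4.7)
The plan is to first remove the outer $dt$-integration via Fubini. Writing $\Psi(s) := \int_s^T \psi_t\,dt$ (bounded since $\psi\in L^\infty([0,T],\mathbb{R})$) and $E_s := \exp(-\int_0^s(K_l+\rho(\phi_l))\,dl)\le 1$, one obtains
\[
G(U,\varphi,Y) = \mathbb{E}\int_0^T 2\Psi(s)E_s\Big\langle \int_\mathbb{X} f(s,U_s,z)(\varphi(s,z)-1)\nu(dz),\, Y_s\Big\rangle_H\,ds.
\]
Then I would decompose telescopically as $G_1-G_4=(G_1-G_2)+(G_2-G_3)+(G_3-G_4)$ with $G_1:=G(X^{\epsilon_k},\varphi_{\epsilon_k},X^{\epsilon_k})$, $G_2:=G(X,\varphi_{\epsilon_k},X^{\epsilon_k})$, $G_3:=G(X,\varphi,X^{\epsilon_k})$, $G_4:=G(X,\varphi,X)$, so that only one argument changes per term, and show each difference tends to zero separately.

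For $G_3-G_4$, Cauchy-Schwarz gives $|G_3-G_4|\le C(\mathbb{E}\int_0^T\|H_s\|_H^2ds)^{1/2}(\mathbb{E}\int_0^T\|X^{\epsilon_k}_s-X_s\|_H^2ds)^{1/2}$ with $H_s=\int f(s,X_s,z)(\varphi(s,z)-1)\nu(dz)$. The first factor is finite by (H5) and Lemma \ref{Lemma-Condition-0,H-1,H}; the second converges to zero by interpolating the $L^{2m}$-convergence in Lemma \ref{lem0517a1}(iii) ($2m<2$) against the $L^p$-moment bound ($p\ge 4$) of Lemma \ref{lemma 4.1}, yielding $L^2(ds\,d\bar{\mathbb{P}})$-convergence via uniform integrability. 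For $G_2-G_3$, split $X^{\epsilon_k}_s = X_s + (X^{\epsilon_k}_s-X_s)$: the $(X^{\epsilon_k}_s-X_s)$-piece is treated just as $G_3-G_4$ (with the first factor uniform in $\varphi\in S^N$ by Lemma \ref{Lemma-Condition-0,H-1,H}), while the $X_s$-piece equals $\mathbb{E}\int_0^T\int_\mathbb{X}\zeta(s,z)(\varphi_{\epsilon_k}(s,z)-\varphi(s,z))\nu(dz)\,ds$ for $\zeta(s,z):=2\Psi(s)E_s\langle f(s,X_s,z),X_s\rangle_H$. By (H5), $|\zeta(s,z)|\le C(1+\sup_r\|X_r\|_H^2)L_f(s,z)$ pointwise in $\omega$ with $L_f\in L_2(\nu_T)\cap\mathcal{H}_1$, so Lemma \ref{lem-thm2-02}(a) gives pointwise-in-$\omega$ convergence to zero, dominated by $C(1+\sup_r\|X_r\|_H^2)C_{L_f,N}$ (Lemma \ref{Lemma-Condition-0,H-1,H}), whose expectation is finite by Fatou and Lemma \ref{lemma 4.1}.

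The main obstacle is $G_1-G_2$, which involves $f(s,X^{\epsilon_k}_s,z)-f(s,X_s,z)$ paired with the merely weakly convergent $\varphi_{\epsilon_k}$. Using (H6),
\[
|G_1-G_2|\le C\,\mathbb{E}\int_0^T \|X^{\epsilon_k}_s\|_H\|X^{\epsilon_k}_s-X_s\|_H \int_\mathbb{X} G_f(s,z)|\varphi_{\epsilon_k}(s,z)-1|\nu(dz)\,ds.
\]
A direct Cauchy-Schwarz is blocked because Lemma \ref{Lemma-Condition-0,H-1,H} only supplies an $L^1$-in-$(s,z)$ bound on $G_f|\varphi-1|$ uniform over $S^N$, not an $L^2$-in-$s$ bound. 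I would therefore mimic the $\delta$-splitting used in the proof of Lemma \ref{lem0517a2}: for $\delta>0$ set $A_{\delta,\epsilon_k}:=\{s\in[0,T]:\|X^{\epsilon_k}_s-X_s\|_H>\delta\}$ and note $\mathbb{E}\lambda_T(A_{\delta,\epsilon_k})\to 0$ by Lemma \ref{lem0517a1}(iii). On $A_{\delta,\epsilon_k}^c$ the contribution is at most $C\delta\,\mathbb{E}[\sup_s\|X^{\epsilon_k}_s\|_H]\cdot C_{G_f,N}=O(\delta)$ uniformly in $\epsilon_k$ by Lemmas \ref{lemma 4.1} and \ref{Lemma-Condition-0,H-1,H}. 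On $A_{\delta,\epsilon_k}$ pull out $\sup_s\|X^{\epsilon_k}_s\|_H\sup_s\|X^{\epsilon_k}_s-X_s\|_H$, apply higher-moment H\"older using Lemma \ref{lemma 4.1}, and control the residual $\mathbb{E}(\int_{A_{\delta,\epsilon_k}}\int_\mathbb{X} G_f|\varphi_{\epsilon_k}-1|\nu(dz)\,ds)^2$ via Lemma \ref{lem-thm2-02}(c) together with $\lambda_T(A_{\delta,\epsilon_k})\to 0$ in probability. Letting $\delta\to 0$ then kills the $G_1-G_2$ contribution and completes the proof.
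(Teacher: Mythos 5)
Your overall architecture coincides with the paper's: a telescoping decomposition changing one argument of $G$ at a time, with the pure control-convergence piece handled by Lemma \ref{lem-thm2-02}~a) plus dominated convergence (this is exactly the paper's (\ref{eq starstar1})), and the piece pairing $f(s,X^{\epsilon_k}_s,z)-f(s,X_s,z)$ with $|\varphi_{\epsilon_k}-1|$ handled by the $\delta$-splitting over $A_{\delta,\epsilon_k}$, H\"older with the fourth moments of Lemma \ref{lemma 4.1}, and Lemma \ref{lem-thm2-02}~c). The paper telescopes in a different order (last argument first, then first argument, then the control), but that difference is immaterial; your $G_1-G_2$ argument is essentially the paper's treatment of $G(X^{\epsilon_k},\varphi_{\epsilon_k},X)-G(X,\varphi_{\epsilon_k},X)$.

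There is, however, a genuine gap in your treatment of $G_3-G_4$ and of the $(X^{\epsilon_k}-X)$-piece of $G_2-G_3$, which you reduce to it. The Cauchy--Schwarz bound $|G_3-G_4|\le C\big(\mathbb{E}\int_0^T\|H_s\|_H^2\,ds\big)^{1/2}\big(\mathbb{E}\int_0^T\|X^{\epsilon_k}_s-X_s\|_H^2\,ds\big)^{1/2}$ needs $\int_0^T\big(\int_{\mathbb{X}}L_f(s,z)|g(s,z)-1|\nu(dz)\big)^2ds<\infty$, i.e.\ an $L^2$-in-time bound on the inner $\nu$-integral. Lemma \ref{Lemma-Condition-0,H-1,H} supplies only the $L^1$-in-time bound, and the $L^2$-in-time version is \emph{false} uniformly over $S^N$: taking $g=M$ on $[0,a]\times B$ with $\nu(B)=b$ and $g=1$ elsewhere, the constraint $L_T(g)=ab\,l(M)\le N$ permits $a\sim N/(bM\log M)$, for which $\int_0^T\big(\int_{\mathbb{X}}L_f|g-1|\nu(dz)\big)^2ds\sim aM^2\sim NM/(b\log M)\to\infty$. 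This is precisely the obstruction you yourself identify for $G_1-G_2$, but it applies equally here; in particular the claim that the first factor is ``uniform in $\varphi\in S^N$ by Lemma \ref{Lemma-Condition-0,H-1,H}'' in your $G_2-G_3$ step is not correct. (For the fixed limit control one could rescue finiteness from $\varphi\in\bar{\mathbb{A}}_b$, since then $|\varphi-1|$ is bounded and compactly supported in $\mathbb{X}$, but that is not the cited justification, it is not available in the a.s.-convergence setting of Section 5 where only $\varphi\in S^N$ is assumed, and it gives nothing uniform in $k$ for $\varphi_{\epsilon_k}\in\bar{\mathbb{A}}_{b,n_k}$ with possibly $n_k\to\infty$.) The repair is immediate and is what the paper does: run your own $\delta$-splitting over $A_{\delta,\epsilon_k}$ on these terms as well, using (H5) in place of (H6); on $A^c_{\delta,\epsilon_k}$ one gets $O(\delta)$ via Lemma \ref{Lemma-Condition-0,H-1,H}, and on $A_{\delta,\epsilon_k}$ one uses H\"older with Lemma \ref{lemma 4.1} and Lemma \ref{lem-thm2-02}~c). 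With that correction the proof goes through.
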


\begin{proof}
For any fixed $(t,\omega)\in[0,T]\times\Omega$. Set
$$
\zeta(s,z)=\psi_te^{-\int_0^s(K_l+\rho(\phi_l))dl}\langle f(s,X_s,z),X_s\rangle_{H,H}.
$$
By Lemma \ref{lem-thm2-02} and $\sup_{s\in[0,T]}\|X_s\|_H<\infty$ $\bar{\mathbb{P}}$-a.s., we have $\forall (t,\omega)\in[0,T]\times\Omega$,
\begin{eqnarray*}
 &&\lim_{{\epsilon_k}\rightarrow0}
    \psi_t
             \int_0^te^{-\int_0^s(K_l+\rho(\phi_l))dl}
                        2\left<\int_{\mathbb{X}}f(s,X_s,z)
                        (\varphi_{\epsilon_k}(s,z)-1)\nu(dz),X_s\right>_{H,H}
                  ds\nonumber\\
 &=&
   \psi_t
            \int_0^te^{-\int_0^s(K_l+\rho(\phi_l))dl}
                   2\left<\int_{\mathbb{X}}f(s,X_s,z)(\varphi(s,z)-1)\nu(dz),
                   X_s\right>_{H,H}
                  ds.
\end{eqnarray*}
On the other hand, by Lemma \ref{Lemma-Condition-0,H-1,H}
\begin{eqnarray*}
  &&\sup_{\varphi\in S^N}
               \Big|
                  \psi_t\Big(\int_0^te^{-\int_0^s(K_l+\rho(\phi_l))dl}
                  \Big(
                     2\langle\int_{\mathbb{X}}f(s,X_s,z)(\varphi(s,z)-1)\nu(dz),X_s\rangle_{H,H}
                  \Big)ds
               \Big|\nonumber\\
  &\leq&
    C_{\psi}\sup_{\varphi\in S^N}\int_0^T
                     \int_{\mathbb{X}}\|f(s,X_s,z)\|_H\|X_s\|_H|\varphi(s,z)-1|\nu(dz)
                  ds\nonumber\\
  &\leq&
    C_{\psi}(1+\sup_{s\in[0,T]}\|X_s\|_H)^2\sup_{\varphi\in S^N}\int_0^T
                     \int_{\mathbb{X}}L_f(s,z)|\varphi(s,z)-1|\nu(dz)
                  ds\nonumber\\
  &\leq&
    C_{\psi,L_f,N}(1+\sup_{s\in[0,T]}\|X_s\|_H)^2.
\end{eqnarray*}
By the dominated convergence theorem, we have
\begin{eqnarray}\label{eq starstar1}
\lim_{{\epsilon_k}\rightarrow0}G(X,\varphi_{\epsilon_k},X)=G(X,\varphi,X).
\end{eqnarray}

Let $\delta>0$. Recall
$$
A_{\delta,{\epsilon_k}}:=\Big\{
                       s\in[0,T]:\ \|X^{\epsilon_k}_s-X_s\|_H>\delta
                     \Big\},
$$
and (\ref{eq recall}) that is there exists a subsequence $\epsilon_k$ such that
$$
  \lim_{\epsilon_k\rightarrow0}\lambda_T(A_{\delta,{\epsilon_k}})=0,\ \ \mathbb{P}\text{-a.s..}
$$
Then we have
\begin{eqnarray}\label{eq A1}
 &&\Big|G(X^{\epsilon_k},\varphi_{\epsilon_k},X^{\epsilon_k})-G(X^{\epsilon_k},\varphi_{\epsilon_k},X)\Big|\nonumber\\
 &\leq&
    C\mathbb{E}\Big(\int_0^T\int_{\mathbb{X}}\|f(s,X^{\epsilon_k}_s,z)\|_H|\varphi_{\epsilon_k}(s,z)-1|\|X^{\epsilon_k}_s-X_s\|_H\nu(dz)ds\Big)\nonumber\\
 &\leq&
   C\mathbb{E}\Big(\int_0^T\int_{\mathbb{X}}L_f(s,z)(1+\|X^{\epsilon_k}_s\|_H)|\varphi_{\epsilon_k}(s,z)-1|\|X^{\epsilon_k}_s-X_s\|_H\nu(dz)ds\Big)\nonumber\\
 &\leq&
   C\delta
    \mathbb{E}\Big(\int_{A_{\delta,{\epsilon_k}}^c}\int_{\mathbb{X}}L_f(s,z)(1+\|X^{\epsilon_k}_s\|_H)|\varphi_{\epsilon_k}(s,z)-1|\nu(dz)ds\Big)\nonumber\\
   &&+
   C\mathbb{E}
   \Big(\int_{A_{\delta,{\epsilon_k}}}\int_{\mathbb{X}}L_f(s,z)(1+\|X^{\epsilon_k}_s\|_H)|\varphi_{\epsilon_k}(s,z)-1|\|X^{\epsilon_k}_s-X_s\|_H\nu(dz)ds\Big)\nonumber\\
 &\leq&
   C\delta
    \mathbb{E}\Big(\sup_{s\in[0,T]}(1+\|X^{\epsilon_k}_s\|_H)\Big)
      \sup_{\varphi\in S^N}\int_0^T\int_{\mathbb{X}}L_f(s,z)|\varphi(s,z)-1|\nu(dz)ds\nonumber\\
      &&+
      C\mathbb{E}\Big[
                      \sup_{s\in[0,T]}\Big((1+\|X^{\epsilon_k}_s\|_H)
                      (\|X^{\epsilon_k}_s-X_s\|_H)\Big)\nonumber\\
&&\qquad\qquad\times                        \sup_{\varphi\in S^N}\int_{A_{\delta,{\epsilon_k}}}\int_{\mathbb{X}}L_f(s,z)|\varphi(s,z)-1|\nu(dz)ds
                 \Big]\nonumber\\
 &\leq&
   \delta C_{L_f,N}
   +
   C\Big(\mathbb{E}\Big(1+\sup_{s\in[0,T]}\|X^{\epsilon_k}_s\|_H\Big)^4\Big)^{1/4}
   \Big(\mathbb{E}\Big(1+\sup_{s\in[0,T]}\|X^{\epsilon_k}_s-X_s\|_H\Big)^4\Big)^{1/4}\nonumber\\
   &&\cdot
   \Big(\mathbb{E}\Big(\sup_{\varphi\in S^N}\int_{A_{\delta,{\epsilon_k}}}\int_{\mathbb{X}}L_f(s,z)|\varphi(s,z)-1|\nu(dz)ds\Big)^2\Big)^{1/2}.
\end{eqnarray}
Similar as (\ref{eq A 00}) and (\ref{eq P21 03}), we have
\begin{eqnarray}\label{eq A}
 \lim_{{\epsilon_k}\rightarrow0}\Big|G(X^{\epsilon_k},\varphi_{\epsilon_k},X^{\epsilon_k})-G(X^{\epsilon_k},\varphi_{\epsilon_k},X)\Big|=0.
\end{eqnarray}

On the other hand,
\begin{eqnarray*}
&&\Big|G(X^{\epsilon_k},\varphi_{\epsilon_k},X)-G(X,\varphi_{\epsilon_k},X)\Big|\nonumber\\
&\leq&
      C\mathbb{E}\Big(
           \int_0^T\int_{\mathbb{X}}\|f(s,X^{\epsilon_k}_s,z)-f(s,X_s,z)\|_H|\varphi_{\epsilon_k}(s,z)-1|\|X_s\|_H\nu(dz)ds
                 \Big)\nonumber\\
&\leq&
    C\mathbb{E}\Big(
           \int_0^T\int_{\mathbb{X}}G_f(s,z)\|X^{\epsilon_k}_s-X_s\|_H|\varphi_{\epsilon_k}(s,z)-1|\|X_s\|_H\nu(dz)ds
                 \Big).\nonumber
\end{eqnarray*}
Using the similar arguments as proving (\ref{eq A}), we have
\begin{eqnarray}\label{eq B}
  \lim_{{\epsilon_k}\rightarrow0}\Big|G(X^{\epsilon_k},\varphi_{\epsilon_k},X)-G(X,\varphi_{\epsilon_k},X)\Big|=0.
\end{eqnarray}

Combining (\ref{eq A}), (\ref{eq B}), and (\ref{eq starstar1}), we have (\ref{eq star 3}).
\end{proof}

\begin{lem}\label{lem0517a4}
$$
Y_t(\omega)=\mathcal{A}(t,X_t(\omega))\text{\ for\ }dt\times \bar{\mathbb{P}}\text{-almost\ all\ }(t,\omega).
$$
\end{lem}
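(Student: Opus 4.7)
The plan is to carry out a Minty--Browder monotonicity argument. Fix any test process $\phi\in \mathcal{K}\cap\mathcal{N}\cap L^\infty([0,T],L^{\beta+2}(\Omega,H))$ and any nonnegative $\psi\in L^\infty([0,T],\mathbb{R})$. By It\^o's formula applied to $e^{-\int_0^t(K_s+\rho(\phi_s))ds}\|X^\epsilon_t\|_H^2$ for the controlled equation \eqref{eq SPDE 02}, and taking expectation, I obtain
\begin{eqnarray*}
&&\mathbb{E}\Big[e^{-\int_0^T(K_s+\rho(\phi_s))ds}\|\widetilde X^\epsilon_T\|_H^2\Big]-\|x\|_H^2\\
&=&\mathbb{E}\int_0^T e^{-\int_0^s(K_l+\rho(\phi_l))dl}\Big[2\langle\mathcal{A}(s,\widetilde X^\epsilon_s),\widetilde X^\epsilon_s\rangle_{V^*,V}-(K_s+\rho(\phi_s))\|\widetilde X^\epsilon_s\|_H^2\Big]ds\\
&&\qquad+G(\widetilde X^\epsilon,\varphi_\epsilon,\widetilde X^\epsilon)/\text{(without the $\psi_t$-weighting)}+R_\epsilon,
\end{eqnarray*}
where $R_\epsilon$ collects the jump/martingale terms carrying an explicit $\epsilon$ factor (via the compensator correction $\|\epsilon f\|_H^2$ and the stochastic integral against $\widetilde N^{\epsilon^{-1}\varphi_\epsilon}$). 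These vanish as $\epsilon\to 0$ by Lemma \ref{Lemma-Condition-0,H-1,H}, Lemma \ref{lemma 4.1} and the estimate (a) of Proposition \ref{lem 4.6}. To obtain an inequality suitable for the Minty trick, I weight by $\psi_t$ and integrate in $t$, and subtract the analogous deterministic It\^o-type identity written with $\phi$ in place of $\widetilde X^\epsilon$; then I invoke local monotonicity {\bf (H2)} to absorb both the cross term $2\langle\mathcal{A}(s,\widetilde X^\epsilon_s)-\mathcal{A}(s,\phi_s),\widetilde X^\epsilon_s-\phi_s\rangle$ and the $\int\|f(\cdot,\widetilde X^\epsilon,\cdot)-f(\cdot,\phi,\cdot)\|_H^2\nu(dz)$ contribution against the $(K_s+\rho(\phi_s))\|\widetilde X^\epsilon_s-\phi_s\|_H^2$ term on the right-hand side. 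This produces
\[
0\ \le\ \mathbb{E}\int_0^T\psi_t\int_0^t e^{-\int_0^s(K_l+\rho(\phi_l))dl}\,2\langle\mathcal{A}(s,\widetilde X^\epsilon_s)-\mathcal{A}(s,\phi_s),\phi_s\rangle_{V^*,V}\,ds\,dt + (\text{lower-order terms in }\epsilon).
\]

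Next I pass to the limit along the subsequence $\epsilon_k$ of Lemma \ref{lem0517a1}. The weak convergence $\mathcal{A}(\cdot,X^{\epsilon_k})\rightharpoonup Y$ in $\mathcal{K}^*$ handles the linear-in-$\mathcal{A}$ term; Lemma \ref{lem0517a3} handles the $f$-type drift term $G$; the strong $L^{2m}$-in-$H$ convergence from Lemma \ref{lem0517a1}(iii) handles the $\|\widetilde X^\epsilon-\phi\|_H^2$ monotonicity factor; and Fatou plus weak lower semicontinuity of $v\mapsto \mathbb{E}\int_0^T\psi_t e^{-\cdots}\|v_t\|_H^2$ treats the quadratic term in $X^\epsilon$. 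Since $\widetilde X=\bar X=X$ a.e., the resulting inequality reads
\[
\mathbb{E}\int_0^T\psi_t\int_0^t e^{-\int_0^s(K_l+\rho(\phi_l))dl}\,2\langle Y_s-\mathcal{A}(s,\phi_s),X_s-\phi_s\rangle_{V^*,V}\,ds\,dt\ \ge\ 0
\]
for every admissible $\phi$ and every $\psi\ge 0$.

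Finally I apply the Minty/hemicontinuity step. Since $\psi$ is arbitrary nonnegative, the integrand in $t$ is nonnegative for a.e.\ $t$; iterating the argument (or choosing $\psi=\mathbf 1_{[t_0,T]}$ and differentiating in $t_0$) gives, for a.e.\ $(t,\omega)$,
\[
\int_0^t e^{-\int_0^s(K_l+\rho(\phi_l))dl}\,\langle Y_s-\mathcal{A}(s,\phi_s),X_s-\phi_s\rangle_{V^*,V}\,ds\ \ge\ 0.
\]
Choose $\phi_s=X_s-\lambda \eta_s v$ for $v\in V$, $\eta\in L^\infty([0,T],\mathbb{R}_+)$ and $\lambda>0$; since $X\in\mathcal{K}\cap L^\infty([0,T],L^{\beta+2}(\Omega,H))$ and $\rho$ is bounded on balls together with the polynomial bound \eqref{eq solution 03}, such $\phi$ lies in $\mathcal{K}\cap\mathcal{N}\cap L^\infty([0,T],L^{\beta+2}(\Omega,H))$. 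Dividing by $\lambda$, letting $\lambda\downarrow 0$, and using hemicontinuity {\bf (H1)} together with the growth condition {\bf (H4)} to take the limit under the integral, then letting $\eta$ and $v$ range, I conclude $Y_t(\omega)=\mathcal{A}(t,X_t(\omega))$ for $dt\times\bar{\mathbb{P}}$-a.e.\ $(t,\omega)$. The main obstacle is the careful handling of the two simultaneous limits (the monotonicity and the jump-corrector $\epsilon$-small terms in It\^o's formula) while keeping the $\rho(\phi)$-term under control; this is exactly why the weight $e^{-\int_0^s(K_l+\rho(\phi_l))dl}$ is introduced and why $\phi$ is forced to lie in $\mathcal{N}$.
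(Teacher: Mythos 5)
Your proposal follows the same route as the paper's proof: It\^o's formula for $e^{-\int_0^t(K_s+\rho(\phi_s))ds}\|\widetilde X^\epsilon_t\|_H^2$, absorption of the monotonicity cross term and the $f$-difference term into $(K_s+\rho(\phi_s))\|\widetilde X^\epsilon_s-\phi_s\|_H^2$ via {\bf (H2)}, passage to the limit using Lemmas \ref{lem0517a1} and \ref{lem0517a3} together with weak lower semicontinuity of the quadratic functional, comparison with the identity satisfied by the limit equation (\ref{eq step 5 01}), and finally the Minty/hemicontinuity perturbation $\phi=X-\lambda\eta v$. This is the paper's argument in substance (your intermediate display with $2\langle\mathcal{A}(s,\widetilde X^\epsilon_s)-\mathcal{A}(s,\phi_s),\phi_s\rangle$ is garbled, but the verbal description and the limiting inequality you write down are the correct ones, and dropping the $O(\lambda^2)$ quadratic term before the Minty step is harmless).

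The one step that does not hold as written is the localization. From $\mathbb{E}\int_0^T\psi_t F(t)\,dt\ge 0$ for all deterministic $\psi\ge 0$ you may conclude $\mathbb{E}[F(t)]\ge 0$ for a.e.\ $t$, but not the pointwise-in-$\omega$ inequality you assert ``for a.e.\ $(t,\omega)$'': varying a weight that depends on $t$ alone cannot strip off the expectation. Consequently, taking the multiplier $\eta\in L^\infty([0,T],\mathbb{R}_+)$ deterministic, your final step only yields $\mathbb{E}\bigl[e^{-\int_0^s(K_l+\rho(X_l))dl}\langle \mathcal{A}(s,X_s)-Y_s,v\rangle_{V^*,V}\bigr]=0$ for a.e.\ $s$, which is weaker than the claim. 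The repair is exactly what the paper does: keep the expectation throughout and perturb with a \emph{random} multiplier $\tilde{\phi}\in L^\infty([0,T]\times\Omega;dt\times\bar{\mathbb{P}};\mathbb{R})$ of both signs, so that after dividing by $\lambda$, letting $\lambda\downarrow 0$ via {\bf (H1)} and {\bf (H4)}, and letting $\tilde\phi$ and $v$ range over $L^\infty([0,T]\times\Omega)$ and a countable dense subset of $V$, one obtains $\langle\mathcal{A}(s,X_s)-Y_s,v\rangle_{V^*,V}=0$ for $ds\times\bar{\mathbb{P}}$-almost all $(s,\omega)$. With that adjustment (and your correct observation, absent from the paper, that \eqref{eq solution 03} and Lemma \ref{lemma 4.1} are what guarantee $X-\lambda\tilde\phi v\in\mathcal{N}$), your argument coincides with the paper's.
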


\begin{proof}

For $\phi\in \mathcal{K}\cap\mathcal{N}\cap L^\infty([0,T],L^{\beta+2}(\Omega,H))$, applying the ${\rm It\hat{o}'s}$ formula,
\begin{eqnarray*}
  && e^{-\int_0^t(K_s+\rho(\phi_s))ds}\|X^{\epsilon_k}_t\|^2_H-\|x\|^2_H\nonumber\\
 &=&
   \int_0^te^{-\int_0^s(K_l+\rho(\phi_l))dl}
               \Big[
                 -(K_s+\rho(\phi_s))\|X^{\epsilon_k}_s\|^2_H
                 +
                 2\langle \mathcal{A}(s,X^{\epsilon_k}_s),X^{\epsilon_k}_s\rangle_{V^*,V}\nonumber\\
                 &&\ \ \ \ \ \ \ \ \ \ \ \ \ \ \ \ \ \ \ \ \ \ \ \ \ \ \ \ \ +
                 2\langle\int_{\mathbb{X}}f(s,X^{\epsilon_k}_s,z)(\varphi_{\epsilon_k}(s,z)-1)\nu(dz),X^{\epsilon_k}_s\rangle_{H,H}
               \Big]ds\nonumber\\
   &&+
   \int_0^te^{-\int_0^s(K_l+\rho(\phi_l))dl}\int_{\mathbb{X}}\Big[2{\epsilon_k}\langle f(s,X^{\epsilon_k}_{s-},z),X^{\epsilon_k}_{s-}\rangle_{H,H}\Big]\tilde{N}^{{\epsilon_k}^{-1}\varphi_{\epsilon_k}}(ds,dz)\nonumber\\
   &&+
   \int_0^te^{-\int_0^s(K_l+\rho(\phi_l))dl}\int_{\mathbb{X}}\Big[{\epsilon_k}^2\|f(s,X^{\epsilon_k}_{s-},z)\|^2_H\Big]N^{{\epsilon_k}^{-1}\varphi_{\epsilon_k}}(ds,dz).
\end{eqnarray*}
Notice that
$$
M_{\epsilon_k}(t):=\int_0^te^{-\int_0^s(K_l+\rho(\phi_l))dl}\int_{\mathbb{X}}\Big[2{\epsilon_k}\langle f(s,X^{\epsilon_k}_{s-},z),X^{\epsilon_k}_{s-}\rangle_{H,H}\Big]\tilde{N}^{{\epsilon_k}^{-1}\varphi_{\epsilon_k}}(ds,dz)
$$
is a square integrable martingale, we have
\begin{eqnarray}\label{eq star 0}
&& \mathbb{E}\Big(e^{-\int_0^t(K_s+\rho(\phi_s))ds}\|X^{\epsilon_k}_t\|^2_H\Big)-\|x\|^2_H\nonumber\\
 &=&
   -\mathbb{E}\Big(\int_0^te^{-\int_0^s(K_l+\rho(\phi_l))dl}(K_s+\rho(\phi_s))
                  \Big(\|X^{\epsilon_k}_s-\phi_s\|^2_H+2\langle X^{\epsilon_k}_s,\phi_s\rangle_{H,H}-\|\phi_s\|^2_H\Big)ds
             \Big)\nonumber\\
   &&+
   \mathbb{E}\Big(\int_0^te^{-\int_0^s(K_l+\rho(\phi_l))dl}
                  \Big(
                     2\langle \mathcal{A}(s,X^{\epsilon_k}_s)-\mathcal{A}(s,\phi_s),X^{\epsilon_k}_s-\phi_s\rangle_{V^*,V}\nonumber\\
                      &&\ \ \ \ \ \ \ \ \ +
                     2\langle \mathcal{A}(s,\phi_s),X^{\epsilon_k}_s-\phi_s\rangle_{V^*,V}
                      +
                     2\langle \mathcal{A}(s,X^{\epsilon_k}_s),\phi_s\rangle_{V^*,V}
                  \Big)ds
             \Big)\nonumber\\
   &&+
   \mathbb{E}\Big(\int_0^te^{-\int_0^s(K_l+\rho(\phi_l))dl}
                  \Big(
                     2\langle\int_{\mathbb{X}}f(s,X^{\epsilon_k}_s,z)(\varphi_{\epsilon_k}(s,z)-1)\nu(dz),X^{\epsilon_k}_s\rangle_{H,H}
                  \Big)ds
             \Big)\nonumber\\
   &&+
   \mathbb{E}\Big(
     {\epsilon_k} \int_0^te^{-\int_0^s(K_l+\rho(\phi_l))dl}\int_{\mathbb{X}}\|f(s,X^{\epsilon_k}_s,z)\|^2_H\varphi_{\epsilon_k}(s,z)\nu(dz)ds
             \Big)\nonumber\\
   &\leq&
     -\mathbb{E}\Big(\int_0^te^{-\int_0^s(K_l+\rho(\phi_l))dl}(K_s+\rho(\phi_s))
                  \Big(2\langle X^{\epsilon_k}_s,\phi_s\rangle_{H,H}-\|\phi_s\|^2_H\Big)ds
             \Big)\nonumber\\
   &&+
   \mathbb{E}\Big(\int_0^te^{-\int_0^s(K_l+\rho(\phi_l))dl}
                  \Big(
                     2\langle \mathcal{A}(s,\phi_s),X^{\epsilon_k}_s-\phi_s\rangle_{V^*,V}
                      +
                     2\langle \mathcal{A}(s,X^{\epsilon_k}_s),\phi_s\rangle_{V^*,V}
                  \Big)ds
             \Big)\nonumber\\
   &&+
   \mathbb{E}\Big(\int_0^te^{-\int_0^s(K_l+\rho(\phi_l))dl}
                  \Big(
                     2\langle\int_{\mathbb{X}}f(s,X^{\epsilon_k}_s,z)(\varphi_{\epsilon_k}(s,z)-1)\nu(dz),X^{\epsilon_k}_s\rangle_{H,H}
                  \Big)ds
             \Big)\nonumber\\
   &&+
   \mathbb{E}\Big(
     {\epsilon_k} \int_0^te^{-\int_0^s(K_l+\rho(\phi_l))dl}\int_{\mathbb{X}}\|f(s,X^{\epsilon_k}_s,z)\|^2_H\varphi_{\epsilon_k}(s,z)\nu(dz)ds
             \Big).
\end{eqnarray}
By (i) of Lemma \ref{lem0517a1}, we get
\begin{eqnarray}\label{eq star 1}
   && \mathbb{E}\Big[\int_0^T\psi_t\Big(e^{-\int_0^t(K_s+\rho(\phi_s))ds}\|X_t\|^2_H-\|x\|^2_H\Big)dt\Big]\nonumber\\
 &\leq&
    \liminf_{{\epsilon_k}\rightarrow0}
        \mathbb{E}\Big[\int_0^T\psi_t\Big(e^{-\int_0^t(K_s+\rho(\phi_s))ds}\|X^{\epsilon_k}_t\|^2_H-\|x\|^2_H\Big)dt\Big].
\end{eqnarray}

By Lemma \ref{Lemma-Condition-0,H-1,H},
\begin{eqnarray}\label{eq P26 star 2}
  &&\mathbb{E}\Big(
     {\epsilon_k} \int_0^te^{-\int_0^s(K_l+\rho(\phi_l))dl}\int_{\mathbb{X}}\|f(s,X^{\epsilon_k}_s,z)\|^2_H\varphi_{\epsilon_k}(s,z)\nu(dz)ds
             \Big)\nonumber\\
  &\leq&
  \mathbb{E}\Big(
     {\epsilon_k} \int_0^t\int_{\mathbb{X}}(1+\|X^{\epsilon_k}_s\|_H)^2L^2_f(s,z)\varphi_{\epsilon_k}(s,z)\nu(dz)ds
             \Big)\nonumber\\
  &\leq&
  {\epsilon_k}\mathbb{E}\Big((1+\sup_{s\in[0,T]}\|X^{\epsilon_k}_s\|_H)^2\Big)
           \sup_{\varphi\in S^N}\int_0^T\int_{\mathbb{X}}L^2_f(s,z)\varphi(s,z)\nu(dz)ds
             \nonumber\\
  &\leq&
   {\epsilon_k} C_{L_f,2,2,N}.
\end{eqnarray}

Combining from (\ref{eq star 0}) to (\ref{eq P26 star 2}), and Lemma \ref{lem0517a3}, we infer
\begin{eqnarray}\label{eq star 4}
 && \mathbb{E}\Big[\int_0^T\psi_t\Big(e^{-\int_0^t(K_s+\rho(\phi_s))ds}
 \|X_t\|^2_H-\|x\|^2_H\Big)dt\Big]\\
 &\leq&
   -\mathbb{E}\Big[\int_0^T\psi_t\int_0^te^{-\int_0^s(K_l+\rho(\phi_l))dl}(K_s+\rho(\phi_s))
                  \Big(2\langle X_s,\phi_s\rangle_{H,H}-\|\phi_s\|^2_H\Big)ds
             dt\Big]\nonumber\\
   &&+
   \mathbb{E}\Big[\int_0^T\psi_t\int_0^te^{-\int_0^s(K_l+\rho(\phi_l))dl}
                  \Big(
                     2\langle \mathcal{A}(s,\phi_s),X_s-\phi_s\rangle_{V^*,V}
                      +
                     2\langle Y_s,\phi_s\rangle_{V^*,V}
                  \Big)ds
             dt\Big]\nonumber\\
   &&+
   \mathbb{E}\Big[\int_0^T\psi_t\int_0^te^{-\int_0^s(K_l+\rho(\phi_l))dl}
                  \Big(
                     2\langle\int_{\mathbb{X}}f(s,X_s,z)(\varphi(s,z)-1)\nu(dz),X_s\rangle_{H,H}
                  \Big)ds
             dt\Big].\nonumber
\end{eqnarray}

On the other hand, by (\ref{eq step 5 01}), we have
\begin{eqnarray}\label{eq star 5}
  && \mathbb{E}\Big(e^{-\int_0^t(K_s+\rho(\phi_s))ds}\|X_t\|^2_H-\|x\|^2_H\Big)\\
 &=&
   -\mathbb{E}\Big(\int_0^te^{-\int_0^s(K_l+\rho(\phi_l))dl}(K_s+\rho(\phi_s))
                  \|X_s\|^2_Hds
             \Big)\nonumber\\
   &&+
   \mathbb{E}\Big(\int_0^te^{-\int_0^s(K_l+\rho(\phi_l))dl}
                     2\langle Y_s,X_s\rangle_{V^*,V}
                  ds
             \Big)\nonumber\\
   &&+
   \mathbb{E}\Big(\int_0^te^{-\int_0^s(K_l+\rho(\phi_l))dl}
                  \Big(
                     2\langle\int_{\mathbb{X}}f(s,X_s,z)(\varphi(s,z)-1)\nu(dz),X_s\rangle_{H,H}
                  \Big)ds
             \Big).\nonumber
\end{eqnarray}
By (\ref{eq star 4}) and (\ref{eq star 5}), we have
\begin{eqnarray*}
  &&\mathbb{E}\Big[\int_0^T\psi_t\int_0^te^{-\int_0^s(K_l+\rho(\phi_l))dl}
                   \Big(-(K_s+\rho(\phi_s))\|X_s-\phi_s\|^2_H\\
                   &&\qquad\qquad+2\langle \mathcal{A}(s,\phi_s)-Y_s,X_s-\phi_s\rangle_{V^*,V}\Big)
                   \Big)ds
             dt\Big]
  \leq
    0.
\end{eqnarray*}
Put $\phi=X-\eta \tilde{\phi}v$ for $\tilde{\phi}\in L^\infty([0,T]\times\Omega;dt\times\bar{\mathbb{P}};\mathbb{R})$ and $v\in V$,
divide both sides by $\eta$ and let $\eta\rightarrow0$, then we have
\begin{eqnarray*}
  \mathbb{E}\Big[\int_0^T\psi_t\int_0^te^{-\int_0^s(K_l+\rho(\phi_l))dl}
                   \Big(2\tilde{\phi}_s\langle \mathcal{A}(s,\phi_s)-Y_s,v\rangle_{V^*,V}\Big)
                   \Big)ds
             dt\Big]
  \leq
    0.
\end{eqnarray*}

Hence $Y=\mathcal{A}(\cdot,X)$.
\end{proof}

\begin{prop}\label{lem 4.4}
$X(\omega)$ solves the
following equation:
\begin{eqnarray}\label{eq limit X 01}
X_t(\omega)=x+\int_0^t\mathcal{A}(s,X_s(\omega))ds+\int_0^t\int_{\mathbb{X}}f(s,X_s(\omega),z)(\varphi(s,z)(\omega)-1)\nu(dz)ds,
\end{eqnarray}
which has an unique solution in $C([0,T],H)\cap L^\alpha([0,T],V).$
\end{prop}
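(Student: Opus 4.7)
The plan splits into three pieces: (i) verify that the limit $X$ satisfies the integral equation (\ref{eq limit X 01}), (ii) establish the regularity $X(\omega)\in C([0,T],H)\cap L^\alpha([0,T],V)$, and (iii) prove pathwise uniqueness via local monotonicity. Parts (i) and (ii) are essentially assembled from Lemmas \ref{lem0517a1}--\ref{lem0517a4} and Proposition \ref{lem 4.6}, so I would keep them brief; the main obstacle is the uniqueness argument, because the $(\varphi-1)\nu$-weighted drift term is not directly covered by assumption \textbf{(H2)} and must be handled separately through \textbf{(H6)}.

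For existence, I would start from (\ref{eq SPDE 02}) and pass to the weak limit along the subsequence $\epsilon_k$ supplied by Lemma \ref{lem0517a1}. The martingale $M^{\epsilon_k}_t$ vanishes in $L^2(\Omega;H)$ by Proposition \ref{lem 4.6}(a); the drift $\int_0^t\mathcal{A}(s,X^{\epsilon_k}_s)ds$ converges weakly in $\mathcal{K}^*$ to $\int_0^t Y_s\,ds$, which Lemma \ref{lem0517a4} identifies with $\int_0^t\mathcal{A}(s,X_s)\,ds$; and the $(\varphi_{\epsilon_k}-1)\nu$ integral converges, tested against arbitrary $h\in H$, to $\int_0^t\int_{\mathbb{X}} f(s,X_s,z)(\varphi(s,z)-1)\nu(dz)\,ds$ by Lemma \ref{lem0517a2}. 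Combining these three limits with the identification $\widetilde{X}_t(\omega)=X_t(\omega)$ noted after (\ref{eq step 5 01}) yields (\ref{eq limit X 01}) $\bar{\mathbb{P}}$-a.s.

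For the regularity, weak lower semicontinuity in Lemma \ref{lem0517a1}(i) together with Fubini promotes the $\mathcal{K}$-membership of $X$ to $X(\omega)\in L^\alpha([0,T],V)$ for $\bar{\mathbb{P}}$-a.e. $\omega$; similarly Lemma \ref{lemma 4.1} via Fatou gives $\sup_{t\in[0,T]}\|X_t\|_H<\infty$ a.s. To promote the $V^*$-continuity of the right-hand side of (\ref{eq limit X 01}) to $H$-continuity, I would invoke the classical Lions-type lemma on the Gelfand triple $V\subset H\subset V^*$: since $\mathcal{A}(\cdot,X_\cdot)\in L^{\alpha/(\alpha-1)}([0,T],V^*)$ by \textbf{(H4)} together with the bounds just obtained, and the $(\varphi-1)\nu$-integral is absolutely continuous into $H$ with an integrable $H$-norm density by \textbf{(H5)} and Lemma \ref{Lemma-Condition-0,H-1,H}, the process $X$ admits an $H$-continuous version for which the $H$-norm chain rule is available.

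For uniqueness, let $X^{(1)},X^{(2)}$ be two solutions in $C([0,T],H)\cap L^\alpha([0,T],V)$ for the same $\varphi$. Applying the chain rule to $\|X^{(1)}_t-X^{(2)}_t\|_H^2$, I would use \textbf{(H2)} with $v_2=X^{(2)}_s$ to dominate the monotone drift contribution by $(K_s+\rho(X^{(2)}_s))\|X^{(1)}_s-X^{(2)}_s\|_H^2$, discarding the non-negative $\int_{\mathbb{X}}\|f(s,X^{(1)}_s,z)-f(s,X^{(2)}_s,z)\|_H^2\nu(dz)$ contribution. The cross term arising from the $(\varphi-1)\nu$-drift is controlled via Cauchy--Schwarz and \textbf{(H6)} by $2\|X^{(1)}_s-X^{(2)}_s\|_H^2\int_{\mathbb{X}} G_f(s,z)|\varphi(s,z)-1|\nu(dz)$. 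The resulting Gronwall coefficient
\begin{eqnarray*}
C_s:=K_s+\rho(X^{(2)}_s)+2\int_{\mathbb{X}} G_f(s,z)|\varphi(s,z)-1|\nu(dz)
\end{eqnarray*}
is integrable on $[0,T]$: the first term by hypothesis, the second by (\ref{eq solution 03}) together with $X^{(2)}\in C([0,T],H)\cap L^\alpha([0,T],V)$, and the third by Lemma \ref{Lemma-Condition-0,H-1,H} applied to $G_f\in\mathcal{H}_2\cap L^2(\nu_T)$. Gronwall's inequality then forces $X^{(1)}\equiv X^{(2)}$, completing the proof.
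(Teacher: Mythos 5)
Your proposal is correct and follows essentially the same route as the paper: the paper's own proof consists of one line citing Lemmas \ref{lem0517a1}--\ref{lem0517a4} for existence (exactly the assembly you describe) and declares uniqueness ``standard'' without writing it out. Your uniqueness argument --- the chain rule for $\|X^{(1)}_t-X^{(2)}_t\|_H^2$ on the Gelfand triple, \textbf{(H2)} with the nonnegative $f$-term discarded, \textbf{(H6)} together with Lemma \ref{Lemma-Condition-0,H-1,H} for the $(\varphi-1)\nu$-drift, and Gronwall with the integrable coefficient $K_s+\rho(X^{(2)}_s)+2\int_{\mathbb{X}}G_f(s,z)|\varphi(s,z)-1|\nu(dz)$ --- is precisely the standard argument the authors intend, and it is valid.
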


\begin{proof}
The equation (\ref{eq limit X 01}) follows from Lemmas \ref{lem0517a1}-\ref{lem0517a4}. The proof of the uniqueness is standard, and it is omitted.
\end{proof}

\begin{lem}\label{lem 5.5}
There exists a subsequence $\varpi_k$, such that
\begin{eqnarray}\label{eq limit X 02}
\lim_{\varpi_k\rightarrow0}\sup_{t\in[0,T]}\|X^{\varpi_k}_t-X_t\|^2_H=0,\ \ \ \bar{\mathbb{P}}\text{-a.s.}.
\end{eqnarray}
\end{lem}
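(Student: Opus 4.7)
The plan is to upgrade the pathwise convergence $X^{\epsilon_k}(\omega)\to X(\omega)$ in $D([0,T],V^*)$ to convergence in the $H$ sup-norm, along a further subsequence. Write $Y^{\epsilon_k}_t=X^{\epsilon_k}_t-X_t$ and consider the a.s.-positive weight $\Gamma_t=\exp(-\int_0^t(K_s+\rho(X_s))ds)$, which is $\bar{\mathbb{P}}$-a.s.\ finite thanks to $X\in C([0,T],H)\cap L^\alpha([0,T],V)$ together with (\ref{eq solution 03}). The It\^o formula for $\|Y^{\epsilon_k}_t\|^2_H$ (as in Liu--R\"ockner) combined with the product rule for $\Gamma_t\|Y^{\epsilon_k}_t\|^2_H$ produces the $\mathcal{A}$-drift, the controlled-jump drift, a jump compensator $\epsilon_k\int_0^t\Gamma_s\int\|f(X^{\epsilon_k})\|^2\varphi_{\epsilon_k}\nu(dz)ds$, and a square-integrable martingale $\widetilde{M}^{\epsilon_k}_t$ from $\widetilde{N}^{\epsilon_k^{-1}\varphi_{\epsilon_k}}$. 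Using (H2) applied to $(X^{\epsilon_k}_s,X_s)$ cancels the $-(K_s+\rho(X_s))\|Y\|^2_H$ term from $d\Gamma/\Gamma$, and the residual $-\int\|f(X^{\epsilon_k})-f(X)\|^2_H\nu(dz)$ is nonpositive and discarded.

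The controlled-jump drift $2\langle Y,\int[f(X^{\epsilon_k})(\varphi_{\epsilon_k}-1)-f(X)(\varphi-1)]\nu(dz)\rangle_H$ is split as
\begin{eqnarray*}
2\langle Y,\textstyle\int[f(X^{\epsilon_k})-f(X)](\varphi_{\epsilon_k}-1)\nu(dz)\rangle_H + 2\langle Y,\textstyle\int f(X)(\varphi_{\epsilon_k}-\varphi)\nu(dz)\rangle_H,
\end{eqnarray*}
and the first piece is bounded by $2\|Y^{\epsilon_k}_s\|^2_H\int G_f|\varphi_{\epsilon_k}-1|\nu(dz)$ via (H6). Gronwall's inequality, with the deterministic upper bound $\sup_{\varphi\in S^N}\int_0^T\int G_f|\varphi-1|\nu(dz)ds\leq C_{G_f,N}$ from Lemma \ref{Lemma-Condition-0,H-1,H}, then yields
\[\sup_{t\leq T}\|Y^{\epsilon_k}_t\|^2_H\leq \Gamma_T^{-1}e^{2C_{G_f,N}}\sup_{t\leq T}|B^{\epsilon_k}_t|,\]
where $B^{\epsilon_k}_t$ collects the bilinear term $I^{\epsilon_k}_t:=2\int_0^t\Gamma_s\langle Y^{\epsilon_k}_s,\int f(X_s,z)(\varphi_{\epsilon_k}-\varphi)\nu(dz)\rangle ds$, the compensator piece, and $\widetilde{M}^{\epsilon_k}_t$.

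For the compensator, (H5), Lemma \ref{Lemma-Condition-0,H-1,H} and Lemma \ref{lemma 4.1} give an $L^1(\bar{\mathbb{P}})$ bound of order $\epsilon_k$. For $\widetilde{M}^{\epsilon_k}$, Kunita's first inequality applied to the predictable quadratic variation, followed by Cauchy--Schwarz and Lemma \ref{lemma 4.1}, gives $\bar{\mathbb{E}}[\sup_t|\widetilde{M}^{\epsilon_k}_t|]=O(\epsilon_k^{1/2})$. Extracting a subsequence $\varpi_k$, both terms converge to $0$ $\bar{\mathbb{P}}$-a.s.\ uniformly in $t$. The delicate piece is $I^{\varpi_k}$: split its integrand on $A_{\delta,\varpi_k}=\{s\in[0,T]:\|Y^{\varpi_k}_s\|_H>\delta\}$. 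On $A^c_{\delta,\varpi_k}$ the contribution is dominated by $\delta\cdot(1+\sup_s\|X_s\|_H)\cdot 2C_{L_f,N}$; on $A_{\delta,\varpi_k}$ pull $\sup_s\|Y^{\varpi_k}_s\|_H$ outside, leaving $\sup_{\varphi\in S^N}\int_{A_{\delta,\varpi_k}}\int L_f(|\varphi_{\varpi_k}-1|+|\varphi-1|)\nu(dz)ds$. Since (\ref{eq recall}) (after a further extraction) gives $\lambda_T(A_{\delta,\varpi_k})\to 0$ $\bar{\mathbb{P}}$-a.s., Lemma \ref{lem-thm2-02}(c) forces this quantity to $0$ a.s.

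Writing $a_k=\sup_{t\leq T}\|Y^{\varpi_k}_t\|_H$, the three estimates combine into a pathwise quadratic inequality
\[a_k^2\leq C(\omega)\big[\delta+a_k\zeta_k(\omega)+r_k(\omega)\big],\]
with $\zeta_k,r_k\to 0$ and $C(\omega)<\infty$ $\bar{\mathbb{P}}$-a.s. Solving gives $a_k\leq \tfrac{1}{2}C\zeta_k+\tfrac{1}{2}\sqrt{C^2\zeta_k^2+4C(\delta+r_k)}$, so $\limsup_k a_k\leq\sqrt{C(\omega)\delta}$; letting $\delta\downarrow 0$ delivers (\ref{eq limit X 02}). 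The main obstacle is $I^{\varpi_k}$: since $\varphi_{\varpi_k}-\varphi$ converges only weakly in $S^N$ and $Y^{\varpi_k}$ only in the $V^*$ sup-norm, no direct passage to the limit is possible under the integral sign; the Chebyshev-type control $\lambda_T(A_{\delta,\varpi_k})\to 0$ together with the absolute-continuity Lemma \ref{lem-thm2-02}(c) is exactly what bridges this gap.
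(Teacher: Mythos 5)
Your proof is correct and follows essentially the same route as the paper: It\^o's formula for $e^{-\int_0^t(K_s+\rho(X_s))ds}\|X^{\epsilon_k}_t-X_t\|^2_H$, local monotonicity (H2) to discard the drift term, the Chebyshev set $A_{\delta,\epsilon_k}$ combined with Lemma \ref{lem-thm2-02} c) to handle the controlled-jump drift, and $O(\epsilon_k^{1/2})$, $O(\epsilon_k)$ bounds for the martingale and jump-quadratic terms. The only (harmless) variations are that you absorb the $[f(\cdot,X^{\epsilon_k},\cdot)-f(\cdot,X,\cdot)](\varphi_{\epsilon_k}-1)$ piece via (H6) plus Gronwall and conclude with a pathwise quadratic inequality in $\delta$, whereas the paper estimates $\mathbb{E}\sup_t|I_2(t)|$ directly by reusing (\ref{eq A1}) and (\ref{eq A}) and extracts the a.s.\ convergent subsequence only at the very end.
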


\begin{proof}

Set $L^{\epsilon_k}_t=X^{\epsilon_k}_t-X_t$. Then
\begin{eqnarray}\label{eq I}
  &&e^{-\int_0^t(K_s+\rho(X_s))ds}\|L^{\epsilon_k}_t\|^2_H\nonumber\\
 &=&
  \int_0^t e^{-\int_0^s(K_r+\rho(X_r))dr}
       \Big(
            -(K_s+\rho(X_s))\|L^{\epsilon_k}_s\|^2_H\nonumber\\
&&\qquad\qquad \qquad\qquad\qquad\qquad           +
            2\langle \mathcal{A}(s,X^{\epsilon_k}_s)-\mathcal{A}(s,X_s), L^{\epsilon_k}_s\rangle_{V^*,V}
       \Big) ds\nonumber\\
  &&+
    2\int_0^t e^{-\int_0^s(K_r+\rho(X_r))dr}
       \Big< \int_{\mathbb{X}}f(s, X^{\epsilon_k}_s,z)(\varphi_{\epsilon_k}(s,z)-1)\nu(dz)\nonumber\\
&&\qquad\qquad\qquad\qquad\qquad\qquad               -
               \int_{\mathbb{X}}f(s, X_s,z)(\varphi(s,z)-1)\nu(dz),
               L^{\epsilon_k}_s
       \Big>_{H,H}ds\nonumber\\
  &&+
   2{\epsilon_k}\int_0^t e^{-\int_0^s(K_r+\rho(X_r))dr}
               \langle \int_{\mathbb{X}}f(s, X^{\epsilon_k}_s,z),L^{\epsilon_k}_s\rangle_{H,H}
               \tilde{N}^{{\epsilon_k}^{-1}\varphi_{\epsilon_k}}(dz,ds)\nonumber\\
  &&+
    {\epsilon_k}^2\int_0^t e^{-\int_0^s(K_r+\rho(X_r))dr}\int_{\mathbb{X}}
               \|f(s, X^{\epsilon_k}_s,z)\|^2_{H}
               N^{{\epsilon_k}^{-1}\varphi_{\epsilon_k}}(dz,ds)\nonumber\\
  &=&I_1(t)+I_2(t)+I_3(t)+I_4(t).
\end{eqnarray}
(H2) implies
\begin{eqnarray}\label{eq I1}
I_1(t)\leq 0.
\end{eqnarray}
By (\ref{eq A1}) and (\ref{eq A}), we have
\begin{eqnarray}\label{eq I2-1}
&&\mathbb{E}\Big(
              \sup_{t\in[0,T]}\Big|
                                   \int_0^t e^{-\int_0^s(K_r+\rho(X_r))dr}
       \langle \int_{\mathbb{X}}f(s, X^{\epsilon_k}_s,z)(\varphi_{\epsilon_k}(s,z)-1)\nu(dz),
               L^{\epsilon_k}_s
       \rangle_{H,H}ds
                              \Big|
          \Big)\nonumber\\
&\leq&
   \mathbb{E}\Big(
       \int_0^T\int_{\mathbb{X}}
           \|f(s, X^{\epsilon_k}_s,z)\|_H|\varphi_{\epsilon_k}(s,z)-1|\|L^{\epsilon_k}_s\|_H
          \nu(dz)ds
          \Big)\nonumber\\
&\leq&
   \mathbb{E}\Big(
       \int_0^T\int_{\mathbb{X}}
           \|X^{\epsilon_k}_s\|_HL_f(s,z)|\varphi_{\epsilon_k}(s,z)-1|\|L^{\epsilon_k}_s\|_H
          \nu(dz)ds
          \Big)\rightarrow 0, \text{as}\ {\epsilon_k}\rightarrow0.
\end{eqnarray}
Then it is not difficulty to obtain
\begin{eqnarray}\label{eq I2}
\lim_{{\epsilon_k}\rightarrow0}\mathbb{E}\Big(\sup_{t\in[0,T]}|I_2(t)|\Big)=0.
\end{eqnarray}

For $I_3$,
\begin{eqnarray}\label{eq I3}
 &&\mathbb{E}\Big(
             \sup_{t\in[0,T]}|I_3(t)|
           \Big)\nonumber\\
 &\leq&
   \mathbb{E}\Big(
             \int_0^T\int_{\mathbb{X}}
                 4{\epsilon_k}^2\|L^{\epsilon_k}_s\|^2_H\|f(s,X^{\epsilon_k}_s,z)\|^2_H
                 N^{{\epsilon_k}^{-1}\varphi_{\epsilon_k}}(ds,dz)
             \Big)^{1/2}\nonumber\\
 &\leq&
      2\mathbb{E}\Big(
                  \sqrt{{\epsilon_k}}\sup_{s\in[0,T]}\|L^{\epsilon_k}_s\|_H
                  \Big(
                    \int_0^T\int_{\mathbb{X}}{\epsilon_k}\|f(s,X^{\epsilon_k}_s,z)\|_H^2N^{{\epsilon_k}^{-1}\varphi_{\epsilon_k}}(ds,dz)
                  \Big)^{1/2}
                 \Big)\nonumber\\
 &\leq&
     2\sqrt{{\epsilon_k}}
     \Big(\mathbb{E}\Big(\sup_{t\in[0,T]}\|L^{\epsilon_k}_t\|^2_H\Big)\Big)^{1/2}
     \Big(\mathbb{E}\Big(\int_0^T\int_{\mathbb{X}}\|f(s,X^{\epsilon_k}_s,z)\|_H^2\varphi_{\epsilon_k}(s,z)\nu(dz)ds\Big)\Big)^{1/2}\nonumber\\
 &\leq&
    2\sqrt{{\epsilon_k}}
     \Big(\mathbb{E}\Big(\sup_{t\in[0,T]}\|L^{\epsilon_k}_t\|^2_H\Big)\Big)^{1/2}\nonumber\\
&&\qquad\times     \Big(\mathbb{E}\Big(1+\sup_{t\in[0,T]}\|X^{\epsilon_k}_t\|^2_H\Big)\sup_{\varphi\in S^N}\int_0^T\int_{\mathbb{X}}L_f^2(s,z)\varphi(s,z)\nu(dz)ds\Big)^{1/2}\nonumber\\
 &&\rightarrow 0,\ \text{as}\ {\epsilon_k}\rightarrow0.
\end{eqnarray}
For $I_4$,
\begin{eqnarray}\label{eq I4}
&&\mathbb{E}\Big(\sup_{t\in[0,T]}|I_4(t)|\Big)\nonumber\\
&\leq&
{\epsilon_k} \mathbb{E}\Big(\int_0^T\int_{\mathbb{X}}\|f(s,X^{\epsilon_k}_s,z)\|^2_H\varphi_{\epsilon_k}(s,z)\nu(dz)ds\Big)\nonumber\\
&\leq&
{\epsilon_k} \mathbb{E}\Big(1+\sup_{t\in[0,T]}\|X^{\epsilon_k}_t\|^2_H\Big)\sup_{\varphi\in S^N}\int_0^T\int_{\mathbb{X}}L_f^2(s,z)\varphi(s,z)\nu(dz)ds\nonumber\\
&&\rightarrow 0,\ \text{as}\ {\epsilon_k}\rightarrow0.
\end{eqnarray}

Combining (\ref{eq I})--(\ref{eq I4}), we have
\begin{eqnarray*}
  &&\lim_{{\epsilon_k}\rightarrow0}\mathbb{E}\Big(\sup_{t\in[0,T]}\Big(e^{-\int_0^t(K_s+\rho(X_s))ds}\|L^{\epsilon_k}_t\|^2_H\Big)\Big)=0.
  \end{eqnarray*}
Then
\begin{eqnarray*}
  &&\lim_{{\epsilon_k}\rightarrow0}\mathbb{E}\Big(e^{-\int_0^T(K_s+\rho(X_s))ds}\Big(\sup_{t\in[0,T]}\|L^{\epsilon_k}_t\|^2_H\Big)\Big)=0.
  \end{eqnarray*}
This implies that there exists a subsequence $\varpi_k$ such that $X^{\varpi_k}$ converges to $X$ $\bar{\mathbb{P}}$-a.s..

\end{proof}

\section{Verification of Condition \ref{LDP}}
\setcounter{equation}{0}
\renewcommand{\theequation}{\thesection.\arabic{equation}}

Recall (\ref{define G-epsilon}) and (\ref{eq G0}), we have
\begin{theorem}\label{th con 2}
Fixed $N\in\mathbb{N}$, and let $\varphi_\epsilon,\varphi\in \tilde{\mathbb{A}}^N$ be such that $\varphi_\epsilon$ converges in distribution
to $\varphi$ as $\epsilon\rightarrow 0$. Then
$$
\mathcal{G}^\epsilon(\epsilon N^{\epsilon^{-1}\varphi_\epsilon})\Rightarrow \mathcal{G}^0(\nu_T^\varphi).
$$

\end{theorem}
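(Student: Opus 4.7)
The plan is to verify part (b) of Condition \ref{LDP} via the Skorohod representation approach, together with the pathwise convergence machinery already built in Sections 4 and 5. Let $\widetilde{X}^\epsilon := \mathcal{G}^\epsilon(\epsilon N^{\epsilon^{-1}\varphi_\epsilon})$ be the solution of the controlled SPDE (\ref{eq SPDE 02}). Proposition \ref{lem 4.6} together with Lemma \ref{lemma 4.1} gives tightness of $\{\widetilde{X}^\epsilon\}$ in $D([0,T],V^*)$, and since $S^N$ is compact with its induced topology (see Section 2.2), the joint family $\{(\widetilde{X}^\epsilon,\varphi_\epsilon)\}$ is tight in $D([0,T],V^*)\times S^N$.

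My first step is therefore to invoke the Jakubowski--Skorohod representation theorem: after extracting a subsequence, there exist $(\widehat{X}^\epsilon,\widehat{\varphi}_\epsilon)$ and $(\widehat{X},\widehat{\varphi})$ on some auxiliary probability space $(\widehat{\Omega},\widehat{\mathcal{F}},\widehat{\mathbb{P}})$ with $(\widehat{X}^\epsilon,\widehat{\varphi}_\epsilon) \stackrel{d}{=} (\widetilde{X}^\epsilon,\varphi_\epsilon)$, $\widehat{\varphi} \stackrel{d}{=} \varphi$, and such that $\widehat{X}^\epsilon \to \widehat{X}$ in $D([0,T],V^*)$ with the supremum norm and $\widehat{\varphi}_\epsilon \to \widehat{\varphi}$ in $S^N$, $\widehat{\mathbb{P}}$-almost surely. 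On this new probability space, the hypotheses at the beginning of Section 5 are met pathwise.

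Next, I would apply the section-5 package to identify the limit. Lemmas \ref{lem0517a1}--\ref{lem0517a4} together with Proposition \ref{lem 4.4} show that $\widehat{X}$ satisfies, $\widehat{\mathbb{P}}$-a.s.,
\begin{equation*}
\widehat{X}_t = x + \int_0^t \mathcal{A}(s,\widehat{X}_s)\,ds + \int_0^t\int_{\mathbb{X}} f(s,\widehat{X}_s,z)(\widehat{\varphi}(s,z)-1)\nu(dz)\,ds,
\end{equation*}
which is exactly the skeleton equation with control $\widehat{\varphi}$, so by uniqueness and the definition (\ref{eq G0}) we have $\widehat{X} = \mathcal{G}^0(\nu_T^{\widehat{\varphi}})$ almost surely. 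Lemma \ref{lem 5.5} then upgrades the convergence: along a further subsequence $\varpi_k$, $\sup_{t\in[0,T]}\|\widehat{X}^{\varpi_k}_t - \widehat{X}_t\|_H^2 \to 0$ almost surely, so $\widehat{X}^{\varpi_k} \to \widehat{X}$ in $D([0,T],H)$ under the uniform topology.

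Finally, equality in distribution transfers this to the original setting: $\mathcal{G}^\epsilon(\epsilon N^{\epsilon^{-1}\varphi_\epsilon})$ converges in distribution along the subsequence to $\mathcal{G}^0(\nu_T^{\varphi})$ (since $\widehat{\varphi}\stackrel{d}{=}\varphi$ and $\mathcal{G}^0$ depends measurably on the control). Because every subsequence of $\{\mathcal{G}^\epsilon(\epsilon N^{\epsilon^{-1}\varphi_\epsilon})\}$ admits a further subsequence converging in distribution to the same limit $\mathcal{G}^0(\nu_T^\varphi)$, the whole sequence converges, proving the theorem. The main obstacle is the identification step inside Section 5 — the local monotonicity hypothesis (H2) forces the use of the exponentially weighted It\^o expansion of $\|\widetilde{X}^\epsilon\|_H^2$ and a Minty-type passage to the limit (as carried out in the proof of Lemma \ref{lem0517a4}), which must be combined with the compensator-convergence Lemmas \ref{Lemma-Condition-0,H-1,H}--\ref{lem-thm2-02} to handle the Poisson-driven terms; everything else is moment control and standard tightness bookkeeping.
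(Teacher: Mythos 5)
Your overall strategy (tightness, Skorohod representation, identification of the limit via the Section~5 machinery, and the subsequence principle) is the same as the paper's, but two steps in your execution have genuine gaps, and both are precisely the points that the paper's decomposition $\widetilde{X}^\epsilon = x + Y^\epsilon + Z^\epsilon + M^\epsilon$ is designed to handle.

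First, the Skorohod representation theorem gives almost sure convergence in the topology in which you established tightness, namely the Skorokhod topology of $D([0,T],V^*)$ --- \emph{not} the uniform topology. You assert that $\widehat{X}^\epsilon \to \widehat{X}$ ``in $D([0,T],V^*)$ with the supremum norm,'' but this upgrade is only valid when the limit is continuous, and nothing in your argument establishes continuity of $\widehat{X}$ before you need the uniform convergence (it is a standing hypothesis at the start of Section~5 that $X^\epsilon(\omega)\to X(\omega)$ in supremum norm in $V^*$, and it is used, e.g., in the proof of (iii) of Lemma~\ref{lem0517a1}). The paper avoids this circularity by applying Skorohod to the tuple $(\varphi_\epsilon, M^\epsilon, Z^\epsilon, Y^\epsilon, \bar N)$: part (a) of Proposition~\ref{lem 4.6} forces the limit of $M^\epsilon$ to be $0$, parts (b) and (c) place $Z^\epsilon$ and $Y^\epsilon$ in $C([0,T],V^*)$ so their limits are continuous and the Skorokhod convergence is automatically uniform, and hence $\overrightarrow{X}^\epsilon = x+\overrightarrow{M}^\epsilon+\overrightarrow{Z}^\epsilon+\overrightarrow{Y}^\epsilon$ converges uniformly to the continuous limit $x+\overrightarrow{Z}+\overrightarrow{Y}$. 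Your joint tightness of $(\widetilde{X}^\epsilon,\varphi_\epsilon)$ alone does not deliver this.

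Second, you must include the driving Poisson random measure $\bar N$ in the Skorohod tuple. The identification lemmas of Section~5 (in particular the It\^o expansions in the proofs of Lemmas~\ref{lem0517a4} and~\ref{lem 5.5}) require that the representative processes on the new probability space still solve the controlled SPDE~(\ref{eq SPDE 02}) driven by a copy of the noise. Equality in law of $(\widehat{X}^\epsilon,\widehat\varphi_\epsilon)$ with $(\widetilde{X}^\epsilon,\varphi_\epsilon)$ does not by itself transfer the equation; the paper carries $\bar N$ along exactly so that $\overrightarrow{X}^\epsilon$ satisfies the SPDE with respect to $\widetilde{\overrightarrow{N}}_\epsilon^{\epsilon^{-1}\overrightarrow\varphi_\epsilon}$ on the new space. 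With these two repairs your argument coincides with the paper's proof.
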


\begin{proof} Recall $\bar{\mathbb{M}}$ in Section 2 and notations in Proposition \ref{lem 4.6}.
Denote
$$
\Pi=\Big(S^N,\ D([0,T],V^*),\ C([0,T],V^*),\ C([0,T],V^*),\ \bar{\mathbb{M}}\Big).
$$
Proposition \ref{lem 4.6} implies that  the laws of $\Big\{\Big(\varphi_\epsilon,M^\epsilon,Z^\epsilon,Y^\epsilon, \bar{N}\Big),\ \epsilon>0\Big\}$ is tight in $\Pi$. Let $\Big(\varphi, 0, Z, Y, \bar{N}\Big)$ be any limit point of the tight family. By the Skorohod's embedding theorem, there exist a stochastic basis $(\Omega^1,\mathcal{F}^1,\mathbb{P}^1)$ and, on this basis, $\Pi$-valued random variables $\Big(\overrightarrow{\varphi}_\epsilon,\overrightarrow{M}^\epsilon,\overrightarrow{Z}^\epsilon,\overrightarrow{Y}^\epsilon, \overrightarrow{N}_\epsilon\Big)$, $\Big(\overrightarrow{\varphi},0,\overrightarrow{Z},\overrightarrow{Y}, \overrightarrow{N}_0\Big)$, such that
$\Big(\overrightarrow{\varphi}_\epsilon,\overrightarrow{M}^\epsilon,\overrightarrow{Z}^\epsilon,\overrightarrow{Y}^\epsilon, \overrightarrow{N}_\epsilon\Big)$(respectively $\Big(\overrightarrow{\varphi},0,\overrightarrow{Z},\overrightarrow{Y}, \overrightarrow{N}_0\Big)$)
has the same law as $\Big(\varphi_\epsilon,M^\epsilon,Z^\epsilon,Y^\epsilon, \bar{N}\Big)$(respectively $\Big(\varphi, 0, Z, Y, \bar{N}\Big)$), and
$$
\Big(\overrightarrow{\varphi}_\epsilon,\overrightarrow{M}^\epsilon,\overrightarrow{Z}^\epsilon,\overrightarrow{Y}^\epsilon, \overrightarrow{N}_\epsilon\Big)
\longrightarrow
 \Big(\overrightarrow{\varphi},0,\overrightarrow{Z},\overrightarrow{Y}, \overrightarrow{N}_0\Big)\ \text{in}\ \Pi,\ \mathbb{P}^1\text{-}a.s..
$$
Set $\overrightarrow{X}^\epsilon=x+\overrightarrow{M}^\epsilon+\overrightarrow{Z}^\epsilon+\overrightarrow{Y}^\epsilon$ and
$\overrightarrow{X}=x+\overrightarrow{Z}+\overrightarrow{Y}$. From the equation satisfied by $\Big\{\Big(\varphi_\epsilon,M^\epsilon,Z^\epsilon,Y^\epsilon, \bar{N}\Big),\ \epsilon>0\Big\}$, we have that $\overrightarrow{X}^\epsilon$ satisfies
the following SPDE
\begin{eqnarray*}
d \overrightarrow{X}^\epsilon_t&=&\mathcal{A}(t,\overrightarrow{X}^\epsilon_t)dt
               +\int_{\mathbb{X}}f(t,\overrightarrow{X}^\epsilon_t,z)
               (\overrightarrow{\varphi}_\epsilon(t,z)-1)\nu(dz)dt\\
         &&      +\epsilon\int_{\mathbb{X}}f(t,\overrightarrow{X}^\epsilon_{t-},z)\widetilde{\overrightarrow{N}}_\epsilon^{\epsilon^{-1}\overrightarrow{\varphi}_\epsilon}(dz,dt),
\end{eqnarray*}
here $\overrightarrow{N}_\epsilon^{\varphi}$ is defined as (\ref{Jump-representation}), that is
   \begin{eqnarray*}
      \overrightarrow{N}_\epsilon^{\varphi}((0,t]\times U)=\int_{(0,t]\times U}\int_{(0,\infty)}1_{[0,\varphi(s,x)]}(r)\overrightarrow{N}_\epsilon(dsdxdr),\
      t\in[0,T],U\in\mathcal{B}(\mathbb{X}),
   \end{eqnarray*}
and $\widetilde{\overrightarrow{N}}_\epsilon^{\varphi}$ is the compensated Poisson random measure with respect to $\overrightarrow{N}_\epsilon^{\varphi}$.
\vskip 0.3cm

Using the fact that if $f_n\in D([0,T],\mathbb{R})$ and $\lim_{n\rightarrow\infty}f_n=0$ with the Skorokhod topology of $D([0,T],\mathbb{R})$, then
$\lim_{n\rightarrow\infty}\sup_{t\in[0,T]}|f_n(t)|=0$. We have
\[\lim_{\epsilon\rightarrow0}\sup_{t\in[0,T]}\|\overrightarrow{M}^\epsilon(t)\|_{V^*}=0, \qquad \mathbb{P}^1\mbox{-a.s.}.\]

Notice that \[\lim_{\epsilon\rightarrow0}\sup_{t\in[0,T]}\|\overrightarrow{Z}^\epsilon(t)
-\overrightarrow{Z}(t)\|_{V^*}=0,\qquad \mathbb{P}^1\mbox{-a.s.}\] and \[\lim_{\epsilon\rightarrow0}\sup_{t\in[0,T]}\|\overrightarrow{Y}^\epsilon(t)
-\overrightarrow{Y}(t)\|_{V^*}=0,\qquad \mathbb{P}^1\mbox{-a.s.},\] we have
$$
\lim_{\epsilon\rightarrow0}\sup_{t\in[0,T]}\|\overrightarrow{X}^\epsilon(t)-\overrightarrow{X}(t)\|_{V^*}=0,\ \mathbb{P}^1\text{-a.s..}
$$

Finally, following the proof of Proposition \ref{lem 4.4} and Lemma \ref{lem 5.5}, we can obtain $\overrightarrow{X}$ is the unique solution of (\ref{eq limit X 01}) with
$\varphi$ replaced by $\overrightarrow{\varphi}$, and there exists a subsequence $\varpi_k$ that
$$
\lim_{\varpi_k\rightarrow0}\sup_{t\in[0,T]}\|\overrightarrow{X}^{\varpi_k}(t)-\overrightarrow{X}(t)\|_H=0,\ \ \mathbb{P}^1\text{-a.s.}
$$
which implies this theorem.

\end{proof}

We have finished to verify the second part of Condition \ref{LDP}. To obtain the first part of Condition \ref{LDP},
we just need to replace $\epsilon\int_{\mathbb{X}}f(t,\widetilde{X}^\epsilon_{t-},z)\widetilde{N}^{\epsilon^{-1}\varphi_\epsilon}(dz,dt)$ by 0 in (\ref{eq SPDE 02}) and replacing $\varphi_\epsilon$ by deterministic elements $g_n$ in
 in the proof of Lemma \ref{lemma 4.1}--Lemma \ref{lem 4.4}, then we can similarly prove
the following result.
\begin{theorem}\label{th con 1}
Recall $\mathcal{G}^0$ in (\ref{eq G0}). For all $N\in\mathbb{N}$, let $g_n\rightarrow g$ as $n\rightarrow\infty$. Then
$$
\lim_{n\rightarrow\infty}\sup_{t\in[0,T]}\|\mathcal{G}^0(\nu_T^{g_n})(t)-\mathcal{G}^0(\nu_T^{g})(t)\|_H=0.
$$
\end{theorem}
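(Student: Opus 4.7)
\bigskip

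\noindent\textbf{Proof plan.} The plan is to mimic, in a deterministic setting, the chain of arguments already carried out in Sections~4 and~5 for the controlled stochastic equation \eqref{eq SPDE 02}. Write $X^n:=\mathcal{G}^0(\nu_T^{g_n})$ and $X:=\mathcal{G}^0(\nu_T^g)$, both of which exist uniquely in $C([0,T],H)\cap L^\alpha([0,T],V)$ by Proposition~\ref{lem 4.4}. First I would establish uniform-in-$n$ a priori bounds: applying the chain rule for $\|\cdot\|_H^p$ (with $p=2,\,\beta+2,\,\Upsilon$) to $X^n$ and using coercivity (H3) together with (H5) and Lemma~\ref{Lemma-Condition-0,H-1,H} (which gives $\sup_{h\in S^N}\int_{\mathbb{X}_T}L_f\,|h-1|\,d\nu\,dt<\infty$), I would Gronwall my way to
\[
\sup_n\Bigl(\sup_{t\in[0,T]}\|X^n_t\|_H^p+\int_0^T\|X^n_t\|_H^{p-2}\|X^n_t\|_V^\alpha\,dt\Bigr)\le C_{N,p}.
\]
These are simpler than Lemmas~\ref{lemma 4.1}--\ref{lemma 02} because the martingale terms $I_2$, $J$ are absent.

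Next I would prove relative compactness of $\{X^n\}$ in $C([0,T],V^*)$. Using (H4) and the a priori bounds I get a uniform bound on $\|\mathcal{A}(\cdot,X^n)\|_{L^{\alpha/(\alpha-1)}([0,T];V^*)}$, while the $f$-term is equicontinuous in $H$ uniformly in $g_n\in S^N$ by Lemma~\ref{lem-thm2-02}(c); these, combined with the compact embedding $H\hookrightarrow V^*$, give equicontinuity in $V^*$ via Arzel\`a--Ascoli, exactly as in parts (b)--(c) of Proposition~\ref{lem 4.6}. Extract a subsequence along which $X^{n_k}\to\bar X$ in $C([0,T],V^*)$, $X^{n_k}\rightharpoonup\bar X$ weakly in $\mathcal{K}=L^\alpha([0,T];V)$ and weakly-$\star$ in $L^\infty([0,T];H)$, and $\mathcal{A}(\cdot,X^{n_k})\rightharpoonup\bar Y$ weakly in $\mathcal{K}^*$. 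The driving term passes to the limit by the deterministic version of Lemma~\ref{lem0517a2}: write
\[
\int_0^t\!\!\int_{\mathbb{X}} f(s,X^{n_k}_s,z)(g_{n_k}(s,z)-1)\nu(dz)\,ds
=A_k+B_k,
\]
with $A_k=\int\int f(s,X_s,z)(g_{n_k}-1)$ converging by Lemma~\ref{lem-thm2-02}(a) since $L_f\in\mathcal{H}_2$, and $B_k$ tending to $0$ using (H6), uniform convergence of $X^{n_k}$ to $\bar X$ in $V^*$ (hence along a further subsequence in $H$ for a.e.~$s$ by interpolation as in Lemma~\ref{lem0517a1}(iii)), and Lemma~\ref{lem-thm2-02}(c).

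The main obstacle, as in Lemma~\ref{lem0517a4}, is identifying $\bar Y=\mathcal{A}(\cdot,\bar X)$ under only \emph{local} monotonicity. I would reproduce the Minty-type trick: apply the chain rule to $e^{-\int_0^t(K_s+\rho(\phi_s))ds}\|X^{n_k}_t\|_H^2$ for a test function $\phi\in\mathcal{K}\cap\mathcal{N}\cap L^\infty([0,T];L^{\beta+2}(H))$, use (H2) to drop the monotone bracket and obtain a one-sided inequality, integrate against a nonnegative $\psi\in L^\infty([0,T])$, and pass to the liminf using weak lower semicontinuity of $\|\cdot\|_H^2$, weak convergence of $\mathcal{A}(\cdot,X^{n_k})$ to $\bar Y$, and the driving-term convergence above. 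Comparing with the chain rule applied to $\bar X$ (which by the passage-to-the-limit satisfies $\bar X_t=x+\int_0^t\bar Y_s\,ds+\int_0^t\!\int f(s,\bar X_s,z)(g-1)\nu(dz)ds$), then setting $\phi=\bar X-\eta\tilde\phi v$ and letting $\eta\to 0$ yields $\bar Y=\mathcal{A}(\cdot,\bar X)$. Uniqueness in Proposition~\ref{lem 4.4} then gives $\bar X=X$, so the full sequence converges to $X$ in $C([0,T],V^*)$.

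Finally, to upgrade from $V^*$-convergence to $\sup_t\|X^n_t-X_t\|_H\to 0$, I would apply the chain rule to $e^{-\int_0^t(K_s+\rho(X_s))ds}\|X^n_t-X_t\|_H^2$ as in Lemma~\ref{lem 5.5}: the monotonicity term $I_1$ is nonpositive by (H2), there is no stochastic part, and the cross term
\[
\int_0^t e^{-\int_0^s(\cdot)dr}\Bigl\langle\int_{\mathbb{X}}\!\bigl[f(s,X^n_s,z)(g_n(s,z)-1)-f(s,X_s,z)(g(s,z)-1)\bigr]\nu(dz),\,X^n_s-X_s\Bigr\rangle_{H,H}ds
\]
is handled by splitting as in Lemma~\ref{lem0517a2}, using (H6), the uniform $H$-bound, Lemma~\ref{Lemma-Condition-0,H-1,H}, Lemma~\ref{lem-thm2-02}(a), and the fact that $X^n\to X$ in $C([0,T],V^*)$ (plus the interpolation argument of Lemma~\ref{lem0517a1}(iii) to get $\int_0^T\|X^n_s-X_s\|_H^{2m}ds\to 0$ for $m=\alpha/(\alpha+1)$). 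This yields the claimed uniform convergence in $H$.
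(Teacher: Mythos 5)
Your proposal is correct and follows essentially the same route as the paper, whose own proof of this theorem is just a one-line instruction to rerun the arguments of Lemmas \ref{lemma 4.1}--\ref{lem 4.4} (and implicitly Lemma \ref{lem 5.5}) with the stochastic integral set to zero and $\varphi_\epsilon$ replaced by the deterministic $g_n$. You have simply carried out that reduction explicitly -- a priori bounds, Arzel\`a--Ascoli compactness in $C([0,T],V^*)$, the Minty-type identification of the limit under local monotonicity, uniqueness, and the final Gronwall upgrade to $H$-convergence -- exactly as the authors intend.
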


\end{document}